%*****************************************************************************
%
%   Title: Seshadri constants on the self-product of an elliptic curve
%   Authors: Th. Bauer, C. Schulz
%
%*****************************************************************************

\documentclass[11pt]{article}
\usepackage{anysize}\marginsize{3.5cm}{3.5cm}{1.3cm}{2cm}
\usepackage{amssymb}
\headsep=\baselineskip
\emergencystretch=3em
\pagestyle{myheadings}
\thispagestyle{empty}

\makeatletter\let\orig@item=\@item \def\@item[#1]{\orig@item[\rm #1]}\makeatother
\let\origcaption=\caption \renewcommand\caption[1]{\parbox{0.66\textwidth}{\origcaption{#1}}}

\newtheorem{satz}{Satz}[section]
\makeatletter \newcommand\numberwithin{\@addtoreset} \numberwithin{equation}{satz} \makeatother

\newtheorem{theorem}[satz]{Theorem}
\newtheorem{example}[satz]{Example}
\newtheorem{examples}[satz]{Examples}
\newtheorem{lemma}[satz]{Lemma}
\newtheorem{proposition}[satz]{Proposition}

\newtheorem{remarks}[satz]{Remarks}

\newtheorem{introtheorem}{Theorem}

\newenvironment{proof}[1][Proof]{\trivlist\item[\hskip\labelsep{\it #1.}]}{\hspace*{\fill}$\Box$\endtrivlist}

\newcommand\subjclass[1]{{\renewcommand\thefootnote{}\footnotetext{2000 \textit{Mathematics Subject Classification:} #1.}}}

\newcounter{comments}

\newcommand\inparen[1]{\textnormal{(}{#1}\textnormal{)}}
\newcommand\longto{\longrightarrow}
  % aus amssymb
\renewcommand\ge{\geqslant}  % aus amsymb
\renewcommand\le{\leqslant}  % aus amsymb
\renewcommand\epsilon{\varepsilon}
\renewcommand\phi{\varphi}
\renewcommand\bar{\overline}
\renewcommand\hat{\widehat}

\renewcommand\O{\mathcal O}
\newcommand\HH[3]{\ensuremath{H^{#1}\!\left(#2,#3\right)}}
\newcommand\pd[2]{\frac{\partial #1}{\partial #2}} % partielle Ableitung d#1 nach d#2
\newcommand\liste[3]{\mbox{$#1_{#2},\dots,#1_{#3}$}}
\newcommand\restr[1]{\big|_{#1}}
\newcommand\set[1]{\left\{\,#1\,\right\}}
\newcommand\with{\ \vrule\ }
\newcommand\midtext[1]{\quad\mbox{#1}\quad}

\newcommand\be{\begin{eqnarray*}}
\newcommand\ee{\end{eqnarray*}}
\newcommand\eqnref[1]{(\ref{#1})}
\newcommand\eqdef{\stackrel{\rm def}{=}}
\newcommand\eps{\varepsilon}

\renewcommand\P{\mathbb P}
\newcommand\N{\mathbb N}
\newcommand\Q{\mathbb Q}
\newcommand\R{\mathbb R}
\newcommand\Z{\mathbb Z}
\newcommand\C{\mathbb C}
\providecommand\engqq[1]{``#1''}
\newcommand\numequiv{\equiv_{\rm num}}
\newcommand\tensor{\otimes}
\newcommand\union{\cup}
\newcommand\maxmatrcols{10}
\newcommand\matr[1]{\left(\begin{array}{*{\maxmatrcols}{c}} #1 \end{array}\right)}
\newcommand\abs[1]{\left|#1\right|}
\newcommand\norm[1]{\left\|#1\right\|}
\newcommand\inverse{^{-1}}
\renewenvironment{cases}{\left\{\begin{array}{cll}}{\end{array}\right.}
\newcommand\tline{\noalign{\vskip0.4ex}\hline\noalign{\vskip0.65ex}}
\newcommand\converges[2]{\mathop{\longrightarrow}\limits_{n\to\infty}}
\newcommand\twoconditions[2]{_{\scriptstyle#1\atop\scriptstyle#2}}
\newcommand\interval[1]{\mathinner{#1}}

\newcommand\newop[2]{\newcommand#1{\mathop{\rm #2}\nolimits}}
\newop\mult{mult} % Multiplizität
\newop\NS{NS} % Néron-Severi-Gruppe
\newop\Amp{Amp} % ampler Kegel
\newop\Bl{Bl} % Blow-up
\newop\End{End} % Endomorphismenring
\newop\Nef{Nef}
\newcommand\q[3]{\frac{#1\cdot #2}{\mult_{#3}#2}} % Seshadri-Quotient
\newcommand\Ncd{N_{c,d}} % Abkürzung für die elliptischen Kurven
\newop\mod{\mkern 12mu mod} % so nur für Displays geeignet!
\newcommand\Nabcd{N_{a,b,c,d}}
\newcommand\gensubgroup[1]{\langle#1\rangle}

\newcommand\Llm{L_{\lambda,\mu}}
\newcommand\compact{\parskip=0cm\itemsep=0cm}

\begin{document}

\title{Seshadri constants on the self-product \\ of an elliptic curve}
\author{Thomas Bauer and Christoph Schulz}
\date{June 25, 2008}
\maketitle
\thispagestyle{empty}
\subjclass{Primary 14C20; Secondary 14J25, 14K05}

\begin{abstract}
   The purpose of this paper is to study Seshadri constants on
   the self-product $E\times E$ of an elliptic curve $E$.
   We provide explicit formulas for
   computing the Seshadri constants of all ample line bundles on
   the surfaces considered. As an application, we obtain a good
   picture of the behaviour of the Seshadri function on the nef
   cone.
\end{abstract}

\section*{Introduction}

   For an ample line bundle $L$ on a smooth projective variety
   $X$ over the complex numbers,
   the \textit{Seshadri constant} of $L$ at $x\in X$ is
   by definition the real number
   $$
      \eps(L,x)=\sup\set{\eps>0\with f^*L-\eps E\mbox{ is nef}}
      \ ,
      \eqno\mbox{(*)}
   $$
   where $f:\Bl_x(X)\to X$ is the blow-up of $X$ at $x$ and $E$
   is
   the exceptional divisor over $x$
   (see \cite{Dem92} and \cite[Chapt.~5]{PAG}).
   Seshadri constants are invariants of ample line bundles that
   measure their local positivity at a given point. While they
   were originally intended as a means to produce sections of
   adjoint linear series, it soon became clear that they
   are interesting invariants quite in their own right. It has
   turned out, however, that it is quite difficult to
   determine explicit values except in obvious cases like
   projective space.

   There has been a considerable amount of work
   on Seshadri constants
   in recent years.
   One line of investigation concerns
   specific classes of
   surfaces, aiming
   for explicit bounds and, as far as possible,
   for explicit values of these subtle invariants
   (see for instance
   \cite{Bro06},
   \cite{Fue06},
   \cite{Knu07},
   \cite{Ros07},
   \cite{Tut03},
   \cite{Tut04}).
   Starting with \cite{Nak96}, Seshadri constants have been
   studied quite intensively on abelian varieties (see
   \cite{Laz97},
   \cite{Bau98a},
   \cite{BauSze98},
   \cite{Kon03},
   \cite{Deb04}).
   Here, by homogeneitiy, the Seshadri constant $\eps(L,x)$ is
   independent of the point $x$, so it is an invariant $\eps(L)$
   that is
   attached to every polarized abelian variety $(X,L)$.
   For abelian surfaces of Picard number one,
   the problem of finding explicit values for
   Seshadri constants
   was solved in
   \cite[Sect.~6]{Bau99}.
   In the present paper we attack the problem
   from the opposite end: we consider products
   of elliptic curves.
   While the task of determining Seshadri constants on a product
   $E_1\times E_2$ of two elliptic curves that are \textit{not
   isogenous} is an immediate exercise, the behaviour of Seshadri
   constants on the self-product $E\times E$ of \textit{one}
   elliptic curve turns out to be an interesting and non-trivial
   problem. The latter fact does perhaps not come as a surprise, as
   increasing the rank of the N\'eron-Severi group
   dramatically increases the choice of
   ample line bundles and curves that
   have to be taken into account in~(*).

   The problem naturally breaks up into two parts according to
   whether the elliptic curve has complex multiplication or not.
   In each case we are able to provide a complete picture.

\begin{introtheorem}
   Let $E$ be an elliptic curve without complex multiplication.
   On the abelian surface $X=E\times E$ denote by $F_1,F_2$
   the fibers of the projections and by $\Delta$
   the diagonal.
   Let $L=\O_X(b_1F_1+b_2F_2+b_3\Delta)$ be any ample line bundle
   on $X$, and take
   a permutation $(a_1,a_2,a_3)$ of
   $(b_1,b_2,b_3)$ satisfying $a_1\ge a_2\ge a_3$.

   Then $\eps(L)$ is the minimum of the following finitely many
   numbers:

   \begin{itemize}
   \item[(1)]
      $a_2+a_3$,
   \item[(2)]
      $\displaystyle\frac {a_2a_1^2+a_1a_2^2+a_3(a_1+a_2)^2}{\gcd(a_1,a_2)^2}$,
   \item[(3)]
      $\min\set{a_1d^2+a_2c^2+a_3(c+d)^2
      \with c ,d\in\N \mbox{ coprime}, \, c+d<\frac 1{\sqrt 2}(a_1+a_2)}$.
   \end{itemize}
\end{introtheorem}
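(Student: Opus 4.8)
The plan is to compute $\eps(L)$ as an infimum over irreducible curves $C\subset X$ of the quotients $\frac{L\cdot C}{\mult_0 C}$ (using the fact, standard for abelian varieties, that $\eps(L)$ equals this infimum and that the single-point multiplicity suffices by homogeneity). Since $\NS(X)=\Z F_1\oplus\Z F_2\oplus\Z\Delta$ with intersection form determined by $F_i^2=0$, $F_1\cdot F_2=1$, $F_i\cdot\Delta=1$, $\Delta^2=0$, every curve class is $C\equiv xF_1+yF_2+z\Delta$, and ampleness of $L$ plus the index theorem constrain the signs and the self-intersection $C^2=2(xy+xz+yz)\ge 0$. The first step is to set up this numerical bookkeeping and to record the three obvious ``test curves'': the two fibers through $0$ (each giving $L\cdot F_i = $ a sum of two of the $b_j$, whose minimum is $a_2+a_3$, explaining bound~(1)), and — because $E$ has no CM so that $\Hom(E,E)=\Z$ — the graphs $\Gamma_n=\set{(x,nx)}$ of the multiplication-by-$n$ maps together with $\Delta=\Gamma_1$ and $F_1=\Gamma_\infty$; a graph $\Gamma_{c/d}$ in lowest terms has class $d^2F_1+c^2F_2+$ (correction term) and passes through $0$ with multiplicity controlled by $\gcd$, which is exactly what produces the quotients in bounds~(2) and~(3).

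The heart of the argument is the reverse inequality: showing that no \emph{other} irreducible curve can give a smaller quotient, i.e. that the infimum is attained among the listed curves. I would argue that an irreducible curve $C$ computing (or nearly computing) $\eps(L)$ and having $\mult_0 C=m\ge 1$ must satisfy $C^2\ge m(m-1)$ (arithmetic genus $\ge 0$ on the blow-up, since on an abelian surface $C^2 = 2p_a(C)-2 \ge -2$, and after blowing up one needs $C^2 - m^2 \ge -2$); combined with the Hodge-index inequality $(L\cdot C)^2\ge L^2\cdot C^2$ this forces $\frac{L\cdot C}{m}$ to be large unless $C$ is in a short list of low-degree classes. More precisely, if $\eps(L) < a_2+a_3$ then the sub-maximal curve has small $C^2$ relative to $m$, and I would show it must be (a translate of) one of the graphs $\Gamma_{c/d}$ — here the no-CM hypothesis is essential, since it pins down which divisor classes actually contain irreducible curves: an irreducible curve of class $d^2F_1+c^2F_2+z\Delta$ with the right self-intersection is forced to be such a graph. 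Then a direct computation of $L\cdot\Gamma_{c/d}$ and $\mult_0\Gamma_{c/d}=\gcd(c,d)$-type quantity yields precisely the expressions in~(2) (the case $\gcd>1$, which up to relabeling is the curve through $0$ of minimal degree) and~(3) (the coprime case), and the range restriction $c+d<\tfrac1{\sqrt2}(a_1+a_2)$ comes from comparing with bound~(1) so that only finitely many $(c,d)$ can beat $a_2+a_3$.

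The main obstacle I anticipate is the classification step: proving that every irreducible curve with sufficiently small $C^2/m$ is a translated graph of a multiplication map (or a fiber). This requires (i) a careful genus/multiplicity estimate on the blow-up to bound $C^2$ from below in terms of $m$, (ii) the Hodge index theorem to convert a hypothetical small quotient into a strong numerical constraint, and (iii) using $\End(E)=\Z$ to conclude that the only irreducible curves realizing the extremal numerical classes are the $\Gamma_n$ and their translates — a curve in such a class defines, via projections, an isogeny $E\to E$, which without CM must be multiplication by an integer. Once the extremal curves are identified, extracting the explicit minimum and checking that the constraint $c+d<\tfrac1{\sqrt2}(a_1+a_2)$ makes the set in~(3) finite is routine; I would also need the permutation $(a_1,a_2,a_3)$ to arrange $a_1\ge a_2\ge a_3$ so that the fiber bound is $a_2+a_3$ and so that bound~(2) corresponds to the ``$\gcd>1$ graph through the origin'' of least $L$-degree.
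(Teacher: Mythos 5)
Your overall architecture --- identify the elliptic curves on $X$ as (translates of) graphs of rational multiplication maps, compute their $L$-degrees, and argue that nothing else can do better --- matches the paper's, and your use of $\End(E)=\Z$ to pin down the elliptic curves corresponds to the paper's classification of the classes $c(c+d)F_1+d(c+d)F_2-cd\Delta$. But there is a genuine gap at exactly the step you flag as the main obstacle: excluding irreducible curves of genus $>1$. The adjunction estimate you propose gives only $C^2=2p_a(C)-2\ge m(m-1)-2$ for a curve with an $m$-fold point, and combined with the index inequality this yields $\frac{L\cdot C}{m}\ge\sqrt{L^2}\,\sqrt{1-\frac{m+2}{m^2}}$, which is vacuous for $m=2$ and too weak for $m=3,4$ to beat the elliptic competitors. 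Your fallback --- classifying all curves with small $C^2/m$ as graphs --- is not the right statement either: the dangerous objects are singular curves of genus $\ge 2$ with a point of small multiplicity; these lie in ample classes, are not graphs of anything, and are not excluded by your items (i)--(iii). The paper closes this gap with two imported results: Hermite's theorem on minima of positive definite binary quadratic forms, applied to $Q(c,d)=L\cdot\Ncd$ (discriminant $L^2/2$), which guarantees an elliptic curve with $L\cdot\Ncd\le\sqrt{\frac23 L^2}$; and the Bauer--Szemberg bound $\frac{L\cdot C}{\mult_xC}\ge\sqrt{\frac78 L^2}$ for irreducible curves of genus $>1$ on an abelian surface, a non-trivial result whose proof goes beyond adjunction plus Hodge index. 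Since $\frac23<\frac78$, genus-$>1$ curves can never compute $\eps(L)$. Some quantitative input of this kind is indispensable; without it your argument does not close.

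A secondary point: the $\gcd(a_1,a_2)^2$ in expression (2) has nothing to do with multiplicities at the origin. All the curves $\Ncd$ are smooth elliptic curves, so $\mult_0\Ncd=1$ and the relevant quantity is simply $L\cdot\Ncd=a_1d^2+a_2c^2+a_3(c+d)^2$ with $(c,d)$ coprime; the denominator appears because the one curve with $d/c=a_2/a_1$ surviving the finiteness inequality $(a_1+a_2)^2>2(a_1d-a_2c)^2(c+d)^2$ is $N_{a_1/g,\,a_2/g}$ with $g=\gcd(a_1,a_2)$. Relatedly, the restriction $c+d<\frac1{\sqrt2}(a_1+a_2)$ in (3) is derived in the paper from the submaximality condition $L\cdot\Ncd<\sqrt{L^2}$, not from a comparison with $a_2+a_3$; comparing with (1) alone would not obviously produce this bound.
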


   As an application, we obtain in
   Sect.~\ref{sect seshadri function}
   a good picture of the
   behaviour
   of the \textit{Seshadri function}
   $$
      \eps:\Nef(X)\longto\R,\quad L\mapsto\eps(L) \ .
   $$
   We find that this function is continuous on the nef cone of $X$,
   and that its
   cross-sections are piecewise linear (see
   Sect.~\ref{sect seshadri function} for examples).

   Our second main result
   concerns elliptic curves with complex multiplication. We focus
   on those two curves that admit an automorphism $\ne\pm 1$.
   We prove:

\begin{introtheorem}
   Let $E_1$ be the elliptic curve admitting the automorphism
   $\iota:[x]\mapsto [ix]$, i.e., $E_1=\C/(\Z+i\Z)$.
   On the abelian surface $X=E_1\times E_1$ denote by $F_1,F_2$
   the fibers of the projections, by $\Delta$
   the diagonal, and by $\Sigma$ the graph of $\iota$.
   Let
   $L=\O_X(a_1F_1+a_2F_2+a_3\Delta+a_4\Sigma)$ be any ample
   line bundle
   on $X$. Then
   \be
      && \eps(L) =
      \!\!\!\!\!\!
      \min\twoconditions{a,b,c,d\in\Z}{\abs a,\abs b,\abs c, \abs d\le B}
      \!\!\!\!\!\!
         \Big\{a_1(a^2+b^2)+a_2(c^2+d^2)
         \\[-3ex]
         && \hspace{10em} +a_3((a-c)^2+(b-d)^2)
         +a_4((a-d)^2+(b+c)^2)\Big\} \ ,
   \ee
   where
   $$
      B \eqdef\frac{8\max\set{\abs{a_1+a_3+a_4}^2,\abs{a_3}^2,\abs{a_4}^2,\abs{a_2+a_3+a_4}^2}}
         {a_1a_2+a_1a_3+a_1a_4+a_2a_3+a_2a_4+2a_3a_4} \ .
   $$
\end{introtheorem}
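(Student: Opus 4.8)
The plan is to reduce the assertion to finding the minimum of an explicit positive definite quadratic form over the lattice $\Z^4$, using the structure theory of Seshadri constants on abelian surfaces, and then to extract the bound $B$ from positive definiteness.

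\textbf{Step 1 (reduction to a lattice minimum).} Since $\End(E_1)=\Z[i]$, every one‑dimensional abelian subvariety of $X=E_1\times E_1$ is the image $C_{\alpha,\beta}$ of a homomorphism $E_1\to X$, $z\mapsto(\alpha z,\beta z)$, with $(\alpha,\beta)=(a+bi,\,c+di)\in\Z[i]^2$ coprime, each such image being a smooth elliptic curve, and every elliptic curve on $X$ is a translate of one of these. Counting intersection points gives $C_{\alpha,\beta}\cdot F_1=a^2+b^2$, $C_{\alpha,\beta}\cdot F_2=c^2+d^2$, $C_{\alpha,\beta}\cdot\Delta=\abs{\alpha-\beta}^2=(a-c)^2+(b-d)^2$ and $C_{\alpha,\beta}\cdot\Sigma=\abs{\beta-i\alpha}^2=(a-d)^2+(b+c)^2$, so $L\cdot C_{\alpha,\beta}$ is the value at $(a,b,c,d)$ of the quadratic form $Q$ in the statement. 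As some translate of $C_{\alpha,\beta}$ runs through the base point and is smooth there, the elementary bound $\eps(L)\le(L\cdot C)/\mult_xC$ gives $\eps(L)\le Q(a,b,c,d)$; and replacing $(\alpha,\beta)$ by $g\cdot(\alpha,\beta)$ scales $Q$ by $\abs g^2$, so non‑coprime vectors produce only larger values. Hence $\eps(L)\le m(L):=\min\{Q(a,b,c,d):(a,b,c,d)\in\Z^4\setminus\{0\}\}$.

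\textbf{Step 2 (lower bound).} Here I invoke the structure of submaximal Seshadri constants on abelian surfaces (\cite{Bau99}): if $\eps(L)<\sqrt{L^2}$, then $\eps(L)=(L\cdot C)/\mult_xC$ for an irreducible curve $C$ through $x$. Then $(L\cdot C)^2<(\mult_xC)^2L^2$ together with the Hodge index inequality $(L\cdot C)^2\ge L^2\cdot C^2$ forces $C^2\le(\mult_xC)^2-1$; on the other hand, intersecting $C$ with the one‑dimensional family of its own translates passing through $x$ would give $(\mult_xC)^2\le C^2$ unless $C$ is stabilised by a positive‑dimensional subgroup, i.e.\ is a translate of a subtorus. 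Hence $C$ is elliptic with $\mult_xC=1$, and combining with Step~1 yields $\eps(L)=\min\{\sqrt{L^2},\,m(L)\}$.

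\textbf{Step 3 (removing $\sqrt{L^2}$) and Step 4 (the box).} Because $L$ is ample, $Q$ is positive definite; its Gram matrix in the coordinates $(a,b,c,d)$ has diagonal $2\times2$ blocks $pI$ and $qI$, where $p=L\cdot F_2=a_1+a_3+a_4$ and $q=L\cdot F_1=a_2+a_3+a_4$, and off‑diagonal block the matrix of the $\R$‑linear map $\C\to\C$, $w\mapsto(-a_3+a_4i)w$. A short computation then gives $\det Q=(pq-a_3^2-a_4^2)^2=(L^2/2)^2$, so Hermite's inequality for rank‑four lattices ($\gamma_4=\sqrt2$) yields $m(L)\le\sqrt2\,(\det Q)^{1/4}=\sqrt{L^2}$; thus the first term in Step~2 is never the strict minimum and $\eps(L)=m(L)$. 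Finally, evaluating $Q$ at $(1,0,0,0)$ and $(0,0,1,0)$ shows $m(L)\le\min(p,q)$, so at a minimiser $v$ one has $\lambda_{\min}\norm v^2\le Q(v)\le\min(p,q)$, where the Gram matrix has the two eigenvalues $\bigl((p+q)\pm\sqrt{(p-q)^2+4(a_3^2+a_4^2)}\bigr)/2$, each of multiplicity two; using $\lambda_{\min}\lambda_{\max}=pq-a_3^2-a_4^2=L^2/2$ and $\lambda_{\max}\le\max(p,q)+\sqrt{a_3^2+a_4^2}$ one obtains a lower bound for $\lambda_{\min}$, and feeding it in — together with $L^2\le2pq$ and $a_3^2+a_4^2\le2\max(a_3^2,a_4^2)$, and noting that the denominator of $B$ is $L^2/2$ while its numerator dominates $p^2,q^2,a_3^2,a_4^2$ — bounds $\norm v$, hence $\abs a,\abs b,\abs c,\abs d$, by $B$ (generously, as $B$ is not meant to be sharp). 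The genuinely non‑routine step is Step~2: passing from \engq{$\eps(L)$ submaximal} to \engq{$\eps(L)$ computed by an elliptic curve of multiplicity one} is where the surface geometry does the real work; granting that reduction, Steps~1, 3 and 4 are computations with the single quadratic form $Q$.
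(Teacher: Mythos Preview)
Your argument is correct and follows the same overall architecture as the paper (elliptic curves via $\Z[i]$-parametrisation, Hermite/Mahler for rank four, then a box bound on the minimiser), but you diverge from the paper's proof in three places. First, in Step~2 you exclude non-elliptic Seshadri curves by the direct translate argument $(\mult_xC)^2\le C\cdot C_t=C^2$ versus $C^2<(\mult_xC)^2$; the paper instead invokes its Proposition~1.2 (an ample submaximal curve computes its own Seshadri constant), applies Mahler again to $\O_X(C)$ to produce an elliptic $N$ with $C\cdot N\le\sqrt{C^2}$, and uses Lemma~1.1 to force $N\subset C$, a contradiction. Your route is closer to the paper's treatment of the non-CM case (via \cite{BauSze98}) and is arguably more self-contained here. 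Second, your handling of coprimality is cleaner: working with $(\alpha,\beta)$ coprime in $\Z[i]$ makes the parametrising map injective, so $L\cdot C_{\alpha,\beta}=Q(a,b,c,d)$ on the nose, and dividing out $\gcd_{\Z[i]}(\alpha,\beta)$ replaces the paper's somewhat laborious integer-gcd $D$ and the reduction Lemma~A.2. Third, for the box bound you use the explicit eigenvalues of the Gram matrix together with $\lambda_{\min}\lambda_{\max}=L^2/2$ and $Q(v)\le\min(p,q)$; the paper instead proves a general ``analysis lemma'' (its Lemma~4.8) that traps the lattice minimiser in a parallelotope cut out by slabs around the zero sets of the partial derivatives, and then bounds the vertices via $\|M^{-1}\|$. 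Your eigenvalue estimate actually gives a tighter bound than $B$ (one checks $\|v\|^2\le 2(1+\sqrt2)M^2/L^2$ with $M=\max\{p,q,|a_3|,|a_4|\}$, and $L^2\le 2M^2$ then yields $\|v\|\le B$), so you do recover the stated $B$; but you should spell this inequality chain out rather than leave it at ``generously''.
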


   A result of similar shape holds for the elliptic curve with
   automorphism $[x]\mapsto[e^{\pi i/3}x]$ (see Theorem
   \ref{SC theorem cx mult 2} for the precise statement).

   In our opinion it is a nice feature of both Theorem~1 and
   Theorem~2 that they allow the quick and effective
   computation of Seshadri
   constants
   just by taking the minimum of finitely many numbers. Concrete
   examples are shown in Tables
   \ref{table eps} and \ref{table eps cx mult}
   in Sections \ref{sect without cx mult} and \ref{sect cx mult}.

   This paper is organized as follows. We start in
   Sect.~\ref{sect SC} by
   very briefly providing the necessary background on Seshadri
   constants as well as an auxiliary result.
   In Sect.~\ref{sect without cx mult} we study
   abelian surfaces $E\times E$ where $E$ does not have complex
   multiplication.
   We apply these results in Sect.~\ref{sect seshadri function}
   in order to gain insight into the behaviour of
   the Seshadri function on the nef cone.
   Abelian surfaces $E\times E$ where $E$ has complex
   multiplication are studied
   in Sect.~\ref{sect cx mult}. The latter case is
   -- probably expectedly --
   technically harder and requires somewhat different methods.

\paragraph*{\it Convention.}
   We work throughout over the field of complex numbers.

\paragraph*{\it Acknowledgement.}
   This research was supported by DFG grant
   BA 1559/4-3.
   We have benefited from discussions with
   T.~Szemberg.

\section{Seshadri constants}\label{sect SC}

   Let $L$ be an ample line bundle on a smooth projective variety
   $X$, and let $\eps(L,x)$ be the Seshadri constant of $L$ at $x$
   as defined in the introduction.
   An alternative definition, which we will be using, is
   $$
      \eps(L,x)=\inf\set{\q LCx\with C\mbox{ irreducible curve
      passing through $x$}} \ .
   $$
   We mention that
   there is also a way to characterize Seshadri constants in
   terms of the separation of jets: One has
   $$
      \eps(L,x)=\limsup_{k\to\infty}\frac{s(kL,x)}k \ ,
   $$
   where $s(kL,x)$ is the maximal number of jets that the linear
   series $|kL|$ separates at $x$, i.e., the maximal integer $s$ such
   that the evaluation map
   $$
      \HH 0X{kL}\longto\HH 0X{kL\tensor\O_X/\mathfrak m_x^{s+1}}
   $$
   is onto.

   As a consequence of Kleiman's theorem, one has the upper bound
   $\eps(L,x)\le\sqrt{L^n}$, where $n=\dim(X)$.
   On abelian varieties, Seshadri constants enjoy the following
   additional properties:

   \begin{itemize}\compact
   \item
      By homogeneity, the Seshadri constant $\eps(L,x)$
      is independent of the point
      $x$. So it depends only on the line
      bundle, and we will write $\eps(L)$.
   \item
      One has the lower bound $\eps(L)\ge 1$, again as a
      consequence of homogeneity (see
      \cite[Example~5.3.10]{PAG}).
   \end{itemize}

   Consider now a smooth projective surface $X$.
   The following terminology turns out to be quite convenient:
   If $\eps(L,x)$ is smaller than the theoretical upper bound
   $\sqrt{L^2}$, then we will say that the Seshadri constant of
   $L$ at $x$ is \textit{submaximal}. If a curve $C$ satisfies the
   inequality
   $$
      \q LCx < \sqrt{L^2}
   $$
   at some point $x$,
   then we will call $C$ a \textit{submaximal curve} (\textit{for
   $L$ at $x$}). If
   $$
      \q LCx = \eps(L,x) \ ,
   $$
   then we will say that $C$ \textit{computes the Seshadri
   constant of $L$ at $x$}.
   One knows that if $\eps(L,x)$ is submaximal, then there must
   exist a curve that computes $\eps(L,x)$.
   Interestingly,
   by a result of
   Szemberg \cite[Proposition 1.8]{SzHabil}
   the number of submaximal curves for a given ample line bundle
   is bounded from above by the
   rank of the N\'eron-Severi group of $X$.

   We will make use of the following lemma from
   \cite[Sect.~5]{Bau99}.

\begin{lemma}\label{useful lemma}
   Let $X$ be a smooth projective surface, $L$ an ample line
   bundle on $X$, $x\in X$ and $\xi>0$. If there is a divisor
   $D\in|kL|$, $k\in\N$, such that
   $$
      \q LDx\le\xi\sqrt{L^2} \ ,
   $$
   then every irreducible curve with
   $$
      \q LCx<\frac1\xi\sqrt{L^2}
   $$
   is a component of $D$.
\end{lemma}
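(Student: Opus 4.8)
The plan is to use the Hodge Index Theorem together with the elementary observation that two distinct irreducible curves on a surface meet nonnegatively. Suppose for contradiction that $C$ is an irreducible curve with $C\cdot L/\mult_x C < \frac1\xi\sqrt{L^2}$ that is \emph{not} a component of $D$. Write $D=\sum m_i C_i$ with the $C_i$ irreducible and distinct from $C$. Then $C\cdot D\ge \mult_x C\cdot\mult_x D$, because at the point $x$ each local branch of $C$ meets each local branch of $D$ with multiplicity at least one, so the total intersection number picks up at least $\mult_x C\cdot\mult_x D$ from $x$ alone, and all other local contributions are nonnegative since $C$ shares no component with $D$.

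The next step is to combine this with the hypothesis. From $D\in|kL|$ we have $C\cdot D=k\,(C\cdot L)$ and $D^2=k^2L^2$. The hypothesis $D\cdot L/\mult_x D\le\xi\sqrt{L^2}$ rearranges to $\mult_x D\ge \frac{1}{\xi\sqrt{L^2}}\,D\cdot L=\frac{k}{\xi}\sqrt{L^2}$, and the assumption on $C$ gives $\mult_x C>\frac{\xi}{\sqrt{L^2}}\,C\cdot L$, equivalently $C\cdot L<\frac{\sqrt{L^2}}{\xi}\mult_x C$. Feeding these into $C\cdot D\ge\mult_x C\cdot\mult_x D$ yields
$$
   k\,(C\cdot L)\ge \mult_x C\cdot\mult_x D\ge \mult_x C\cdot\frac{k}{\xi}\sqrt{L^2}
   > \frac{k}{\xi}\sqrt{L^2}\cdot\frac{\xi}{\sqrt{L^2}}\,(C\cdot L)
   \cdot\frac{1}{1}
$$
so after cancelling $k$ one obtains $C\cdot L> C\cdot L$, a contradiction. (The chain of inequalities must be arranged so that strictness is preserved; since $\sqrt{L^2}>0$ and $\mult_x C\ge1$, there is no degeneracy.)

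I expect the only real subtlety to be the inequality $C\cdot D\ge\mult_x C\cdot\mult_x D$ when $C$ is not a component of $D$, which is where one must be slightly careful: it follows because for two effective divisors with no common component the intersection number is a sum of nonnegative local intersection multiplicities, and the local intersection multiplicity at $x$ is at least $\mult_x C\cdot\mult_x D$ (a standard fact about intersection of plane curve germs). Everything else is Hodge-index-free bookkeeping. One should also note that if $L^2$ were not actually needed in this form — i.e. the statement is really about the quantities $C\cdot L/\mult_x C$ — the argument goes through verbatim, so no positivity of $L$ beyond $L^2>0$ is used; ampleness is only there to guarantee $|kL|$ is nonempty and $L^2>0$.
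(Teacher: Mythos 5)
Your argument is correct and is essentially the standard proof of this lemma (the paper cites it from \cite{Bau99} without reproducing the proof, but the argument there is the same): assume $C$ is not a component of $D$, use $D\cdot C\ge\mult_x D\cdot\mult_x C$ for effective divisors without common components, and combine with $D\numequiv kL$ to reach $k\,(L\cdot C)>k\,(L\cdot C)$. Your closing remarks are also accurate: the only input beyond bookkeeping is the local intersection inequality, and ampleness is used only to ensure $L^2>0$ so that the strict inequality survives the chain.
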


   As a somewhat surprising consequence,
   which has a crucial application in
   Sect.~\ref{sect cx mult}, an ample irreducible curve that
   is submaximal for \textit{some} ample line bundle in fact
   computes its own Seshadri constant:

\begin{proposition}\label{prop ample submaximal}
   Let $X$ be a smooth projective surface and $x\in X$. If $C$ is
   an irreducible ample
   curve that is submaximal at $x$ for some ample line
   bundle $L$, then $C$ computes $\eps(\O_X(C),x)$.
\end{proposition}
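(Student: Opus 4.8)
The plan is to show that $C$ is already submaximal at $x$ for the polarization $\O_X(C)$ itself, and then to apply Lemma~\ref{useful lemma} with the divisor $D=C$.

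First I would note that $x\in C$, which is implicit in $C$ being submaximal at $x$ since $\mult_x C$ appears in the denominator of $\q LCx$; in particular $\mult_x C\ge 1$. The inequality $\q LCx<\sqrt{L^2}$ can then be rewritten as $(L\cdot C)^2<(\mult_x C)^2\,L^2$. Combining this with the Hodge index inequality $(L\cdot C)^2\ge L^2\cdot C^2$ and cancelling the positive number $L^2$, I obtain $C^2<(\mult_x C)^2$, equivalently $\frac{C^2}{\mult_x C}<\sqrt{C^2}$. Thus $C$ is submaximal at $x$ for $\O_X(C)$ as well, and moreover the hypothesis of Lemma~\ref{useful lemma} is satisfied for the ample line bundle $\O_X(C)$, the divisor $D=C\in|\O_X(C)|$ (so $k=1$) and the value $\xi=1$, because $\q{\O_X(C)}{C}{x}=\frac{C^2}{\mult_x C}\le\sqrt{C^2}$.

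Applying the lemma, every irreducible curve $C'$ through $x$ with $\q{\O_X(C)}{C'}{x}<\sqrt{C^2}$ must be a component of $D=C$, hence must equal $C$ because $C$ is irreducible. Consequently every irreducible curve $C'\neq C$ passing through $x$ satisfies $\q{\O_X(C)}{C'}{x}\ge\sqrt{C^2}>\frac{C^2}{\mult_x C}$, the last inequality being $C^2<(\mult_x C)^2$ once more. Invoking the curve-theoretic description $\eps(\O_X(C),x)=\inf\set{\q{\O_X(C)}{C'}{x}\with C'\mbox{ irreducible curve passing through $x$}}$, the infimum is therefore attained at $C'=C$ and equals $\frac{C^2}{\mult_x C}$; that is, $C$ computes $\eps(\O_X(C),x)$.

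I expect the only genuine obstacle to be the opening reduction --- the observation that submaximality of $C$ for some arbitrary ample $L$ already entails submaximality of $C$ for its own class $\O_X(C)$. This is precisely where the Hodge index theorem enters, and it is what makes Lemma~\ref{useful lemma} applicable with the clean value $\xi=1$; the remainder is a formal manipulation of the defining infimum, so I do not anticipate further difficulty.
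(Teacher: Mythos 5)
Your proposal is correct and follows essentially the same route as the paper: the Hodge index inequality converts submaximality of $C$ for $L$ into submaximality of $C$ for $\O_X(C)$, and then Lemma~\ref{useful lemma} applied with $D=C$ and $\xi=1$ rules out any other submaximal curve, so $C$ computes its own Seshadri constant. The only difference is cosmetic --- you spell out the cancellation of $L^2$ and the final passage to the infimum, which the paper leaves implicit.
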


\begin{proof}
   From the index inequality and the assumption on $C$ we get
   $$
      \frac{\sqrt{L^2}\sqrt{C^2}}{\mult_x(C)}
      \le\q LCx<\sqrt{L^2} \ ,
   $$
   and hence
   $$
      \q {\O_X(C)}Cx < \sqrt{\O_X(C)^2} \ .
   $$
   As $C$ is irreducible, Lemma~\ref{useful lemma} (with $\xi=1$)
   implies that
   there cannot be any other submaximal curves for $\O_X(C)$ at
   $x$.
\end{proof}

   Note that the proposition remains true when \engqq{submaximal}
   is replaced by \engqq{weakly submaximal} (meaning that
   $L\cdot C/\mult_x(C)\le\sqrt{L^2}$ holds instead of the strict
   inequality).

\iffalse
\begin{example}\label{diffcurves}\rm
   By way of warm-up let us
   consider
   elliptic curves $E_1$ and $E_2$ that are not
   isogenous, and let $X=E_1\times E_2$ be their product.
   Denote the fibres of the projections to the
   first and second factor by $F_1$ and $F_2$ respectively.
   The ample line bundles on $X$ are precisely the bundles of the
   form
   $$
      L=\O_X(a_1F_1+a_2F_2) \ ,
   $$
   where $a_1$ and $a_2$ are positive integers. For their
   Seshadri constants we find
   $$
      \eps(L)=\min(a_1,a_2) \ .
   $$
   In fact, setting $m=\min(a_1,a_2)$ we clearly have
   $$
      \eps(L)\ge\eps(\O_X(mF_1+mF_2))
      =m\,\eps(F_1+F_2)\ge 1 \ ,
   $$
   as $\O_X(F_1)$ and $\O_X(F_1)$ are nef.
   We get the opposite inequality by intersecting $L$ with
   $F_1$ and $F_2$.
\end{example}
\fi

\section{The case $E\times E$ without complex multiplication}\label{sect without cx mult}

   Let $E$ be an elliptic curve without complex multiplication.
   The abelian surface $X=E\times E$ is then of Picard number~3,
   and the N\'eron-Severi group is generated over $\Z$
   by the fibers $F_1$,
   $F_2$ of the projections $X\to E$ and the diagonal $\Delta$
   (see \cite[Sect.~2.7]{BLComplexTori}).

   A line bundle
   $$
      L=\O_X(a_1F_1+a_2F_2+a_3\Delta)
   $$
   is ample if and only if its
   the integer coefficients $a_1,a_2,a_2$ satisfy
   the following inequalities:
   \begin{equation}\label{ampleness conditions}
      a_1+a_2>0, \
      a_2+a_3>0, \
      a_3+a_1>0, \
      a_1a_2+a_2a_3+a_3a_1>0 \ .
   \end{equation}
   In fact, if $L$ is ample then its intersections with the
   curves $F_1,F_2,\Delta$, as well as its self-intersection must be
   positive, which shows that the inequalities are necessary.
   Conversely, if the inequalities are satisfied, then $L^2>0$ and
   the intersection of $L$ with the ample line bundle
   $\O_X(F_1+F_2)$ is positive, which implies that $L$ is ample
   (see \cite[4.3.2(b)]{LB}).

\begin{example}\label{warm-up example}\rm
   By way of warm-up let us
   consider an easy case first. Take an ample line bundle
   $L=\O_X(a_1F_1+a_2F_2+a_3\Delta)$, all of whose coefficients
   $a_i$ are non-negative. Let $D$ be the divisor
   $a_1F_1+a_2F_2+a_3\Delta$.
   For any irreducible curve $C$ passing through 0 and
   different from $F_1,F_2,\Delta$, we have
   $$
      L\cdot C=D\cdot C
      \ge\mult_0D\cdot\mult_0C
      \ge (a_1+a_2+a_3)\cdot\mult_0C \ ,
   $$
   and hence
   $$
      \q LC0\ge a_1+a_2+a_3 \ .
   $$
   On the other hand, as
   $L\cdot F_1=a_2+a_3$,
   $L\cdot F_2=a_1+a_3$, and
   $L\cdot \Delta=a_1+a_2$, we find that
   $$
      \eps(L)=\min\set{ a_1+a_2, a_2+a_3, a_3+a_1 } \ .
   $$
   So in this case one of the generators $F_1,F_2,\Delta$
   computes $\eps(L)$.

   Note that the argument in this example
   depends crucially on the fact that we
   know explicitly a suitable effective divisor $D$ in the linear
   series $|L|$. If we consider
   an ample line bundle like
   $\O_X(7F_1+6F_2-3\Delta)$ instead, no suitable effective
   divisor is apparent,
   and
   it is therefore not so clear how its
   Seshadri constant can be computed.
   We will return to this example in
   \ref{example eps}.
\end{example}

   Our purpose in this section is to determine  the Seshadri
   constants of all ample line bundles on $X$.
   The first point is to prove that all Seshadri constants on $X$
   are computed by elliptic curves (Theorem~\ref{computed by
   elliptic}). Based on this result we can then carry out the
   computation of the Seshadri constants (Theorem~\ref{SC
   theorem}).

\begin{theorem}\label{computed by elliptic}
   Let $E$ be an elliptic curve without complex multiplication,
   and let $X=E\times E$. For any ample line bundle $L$ on $X$,
   the
   Seshadri constant $\eps(L)$ is computed by an elliptic curve.
\end{theorem}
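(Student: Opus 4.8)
The strategy is to show that whenever $\eps(L)$ is submaximal, the curve computing it must be an elliptic curve, and to handle the non-submaximal case separately. First I would dispose of the case $\eps(L)=\sqrt{L^2}$: here one can produce an elliptic curve achieving the value $\sqrt{L^2}$ by exhibiting, among the countably many elliptic curves on $X=E\times E$ (the fibers of the isogenies $E\times E\to E$, which include $F_1$, $F_2$, $\Delta$ and all the ``graphs'' $\{(x,nx)\}$ and their rational analogues), one whose slope $L\cdot C/\mult_0 C = L\cdot C$ meets the bound — or, failing an exact hit, observe that the abundance of elliptic curves of varying $L$-degree forces $\eps(L)$ to actually be submaximal unless $L$ is proportional to one of finitely many special classes. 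So the substantive content is the submaximal case.

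For the submaximal case, suppose $C$ is an irreducible curve with $L\cdot C/\mult_0(C) = \eps(L) < \sqrt{L^2}$; such a $C$ exists by the general theory quoted in Section~\ref{sect SC}. I want to argue $C$ has arithmetic genus $1$, i.e. $C$ is a translate of an elliptic subgroup (equivalently $C^2=0$). The tool is the index theorem on the surface $X$: writing $C \numequiv \alpha_1 F_1 + \alpha_2 F_2 + \alpha_3 \Delta$ with the $\alpha_i$ determined by the intersection numbers $C\cdot F_1$, $C\cdot F_2$, $C\cdot \Delta$ (here one uses that $F_1,F_2,\Delta$ generate $\NS(X)$ and that the intersection form on this basis is explicit), one gets $C^2$ as a quadratic form in $\alpha_1,\alpha_2,\alpha_3$. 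The submaximality inequality, combined with $\mult_0(C)\le$ (the number forced by $C\cdot F_i$), pins down $C^2$ from above. The key point will be to show $C^2 \le 0$, hence $C^2 = 0$ by the Hodge index theorem (the intersection form on $\NS(X)$ has signature $(1,2)$, so an irreducible curve with negative self-intersection would be rigid, and on an abelian surface there are no rigid curves — every effective divisor moves in its algebraic equivalence class by translation). Once $C^2=0$, adjunction gives $p_a(C)=1$, and irreducibility plus $C^2=0$ forces $\mult_0(C)=1$ and $C$ smooth, so $C$ is genuinely an elliptic curve.

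The mechanism that yields $C^2\le 0$ is the homogeneity/translation structure of the abelian surface together with Lemma~\ref{useful lemma}. Concretely: if $C^2>0$ then $C$ is itself ample (on an abelian surface a curve with positive self-intersection and positive intersection with an ample class is ample), and then by Proposition~\ref{prop ample submaximal}, since $C$ is submaximal for $L$ it must compute its own Seshadri constant $\eps(\O_X(C))$, with $C$ the unique submaximal curve for $\O_X(C)$. But by homogeneity $\eps(\O_X(C))\ge 1$ and is computed at $0$ by $C$; translating $C$ by a general point of $X$ produces another irreducible curve algebraically equivalent to $C$ passing through $0$ with the same slope — contradicting uniqueness unless these translates all coincide, which is impossible for a moving curve on an abelian surface. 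This contradiction rules out $C^2>0$, completing the argument.

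**Main obstacle.** The delicate step is the bookkeeping that converts the submaximality inequality $L\cdot C < \mult_0(C)\sqrt{L^2}$ into the bound $C^2\le 0$: one must control $\mult_0(C)$ in terms of intersection numbers (the naive bound $\mult_0(C)\le\min(C\cdot F_1,\ C\cdot F_2,\ C\cdot\Delta)$, or a sharper one) and then feed this into the index inequality $(L\cdot C)^2\ge L^2\cdot C^2$ to squeeze out $C^2\le 0$. I expect the clean route is not a direct estimate but rather the translation argument above, which sidesteps the multiplicity bookkeeping entirely by exploiting that abelian surfaces carry no rigid curves — so the real work is packaging Proposition~\ref{prop ample submaximal} and the no-rigid-curves fact into a contradiction, and separately checking that the finitely many ``maximal'' classes (where $\eps(L)=\sqrt{L^2}$) are still hit by elliptic curves.
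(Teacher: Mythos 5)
There is a genuine gap, and it lies in the step you yourself flag as the crux: the translation argument that is supposed to rule out $C^2>0$. If $C$ is an irreducible ample curve that is submaximal for $\O_X(C)$ at $0$, this means $\mult_0(C)>\sqrt{C^2}$. A translate $C_t=C+t$ passes through $0$ only when $-t\in C$, and for a general such $t$ the point $-t$ is a smooth point of $C$, so $\mult_0(C_t)=\mult_{-t}(C)=1$ and the slope of $C_t$ at $0$ is $C^2$, which is not submaximal. So the translates do \emph{not} pass through $0$ ``with the same slope,'' and there is no conflict with the uniqueness statement of Lemma~\ref{useful lemma}, which is a statement at the fixed point $0$. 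Homogeneity only produces a submaximal translate at each \emph{other} point. Indeed your argument cannot be repaired without using the specific geometry of $E\times E$: on a principally polarized abelian surface of Picard number one the Seshadri constant is submaximal and is computed by an irreducible \emph{ample} curve (see \cite[Sect.~6]{Bau99}), so ample submaximal curves genuinely exist on abelian surfaces, and any argument that excludes them wholesale must be wrong. A second, smaller gap is the ``maximal'' case $\eps(L)=\sqrt{L^2}$, which you only gesture at; you would need to actually exhibit an elliptic curve of degree $\le\sqrt{L^2}$ for every ample $L$.

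The paper closes both gaps with a single quantitative mechanism that your sketch does not contain. The intersection number $L\cdot N_{c,d}$ with the elliptic curves $N_{c,d}$ of Proposition~\ref{all elliptic curves} is a positive definite binary quadratic form of discriminant $L^2/2$, so Hermite's theorem (Theorem~\ref{Hermite-Lagrange}) produces an elliptic curve with $L\cdot N_{c,d}\le\sqrt{\tfrac23 L^2}$; in particular $\eps(L)$ is always strictly submaximal, disposing of your first case. On the other side, \cite[Theorem~A.1(b)]{BauSze98} gives the lower bound $L\cdot C/\mult_x C\ge\sqrt{\tfrac78 L^2}$ for any irreducible curve of arithmetic genus $>1$ on an abelian surface. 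Since $\sqrt{2/3}<\sqrt{7/8}$, no such curve can compute $\eps(L)$, and an elliptic curve must. It is this numerical gap between $2/3$ and $7/8$ — not a softness/rigidity argument — that carries the proof; your Proposition~\ref{prop ample submaximal} route is used in the paper only later, in the complex multiplication case, and in a different way.
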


   So in particular, Seshadri constants on $X$ are always
   integers. For the proof of the theorem we need some
   preparation. To begin with, we determine all
   elliptic curves on $X$:

\begin{proposition}\label{all elliptic curves}
   \begin{itemize}
   \item[(i)]
      For every elliptic curve $N$ on $X$ that is not a translate
      of $F_1$, $F_2$ or $\Delta$ there exist coprime integers
      $c$ and $d$ such that one has the numerical equivalence
      $$
         N\numequiv c(c+d)F_1+d(c+d)F_2-cd\Delta \ .
      $$
   \item[(ii)]
      Conversely, for every pair of coprime integers $c$ and $d$
      the linear series
      $$
         |c(c+d)F_1+d(c+d)F_2-cd\Delta|
      $$
      consists of an elliptic curve.
   \end{itemize}
\end{proposition}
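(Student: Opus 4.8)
The plan is to base both statements on a classification of the one‑dimensional abelian subvarieties of $X=E\times E$. First, every elliptic curve $N\subset X$ is a translate of such a subvariety: after translating a point of $N$ to $0$, the inclusion $N\hookrightarrow X$ (sending $0$ to $0$) factors through the Albanese variety of $N$ by a homomorphism, and since $N$ has genus $1$ its Albanese map $N\to\mathrm{Alb}(N)$ is an isomorphism, so $N$ is already a one‑dimensional abelian subvariety. Second, the one‑dimensional abelian subvarieties of $X$ are exactly the subgroups $N_{c,d}:=\ker\big(X\to E,\ (x,y)\mapsto cx+dy\big)$ for coprime integers $(c,d)$, equivalently the images of the closed immersions $\psi_{c,d}\colon E\to X$, $t\mapsto(-dt,ct)$: for a one‑dimensional abelian subvariety $A$ the quotient $X/A$ is an elliptic curve, and the composite of $X\to X/A$ with one of the two embeddings $E\to X$ of the factors is nonzero (otherwise $X\to X/A$ would vanish), hence an isogeny $E\to X/A$, so $X/A$ is isogenous to $E$; composing $X\to X/A$ with an isogeny onto $E$ and using $\mathrm{Hom}(E,E)=\Z$ (no complex multiplication), this composite has the form $(x,y)\mapsto cx+dy$, whose kernel has identity component $N_{c',d'}$ for $(c',d')$ the primitive part of $(c,d)$, and $A=N_{c',d'}$ since both are connected of dimension one.

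Granting this, part (i) is a short intersection calculation. Composing $\psi=\psi_{c,d}$ with the two projections $X\to E$ and with the difference map $(x,y)\mapsto x-y$ gives multiplication by $-d$, $c$ and $-(c+d)$ on $E$; since multiplication by $n$ on an elliptic curve has degree $n^2$, pulling back a point yields $N_{c,d}\cdot F_1=d^2$, $N_{c,d}\cdot F_2=c^2$, $N_{c,d}\cdot\Delta=(c+d)^2$. With $F_1^2=F_2^2=\Delta^2=0$ and $F_1\cdot F_2=F_1\cdot\Delta=F_2\cdot\Delta=1$, writing $N_{c,d}\numequiv xF_1+yF_2+z\Delta$ and intersecting successively with $F_1,F_2,\Delta$ one solves $x=c(c+d)$, $y=d(c+d)$, $z=-cd$. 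A translate of $N_{c,d}$ has the same numerical class, which proves (i). (When $c$, $d$ or $c+d$ vanishes this formula reproduces $F_2$, $F_1$ or $\Delta$ — which is why they are excluded from (i) — and these are elliptic, consistently with (ii).)

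For part (ii), fix coprime $(c,d)$; the cases $cd(c+d)=0$ are immediate (then $N_{c,d}$ is one of $F_1,F_2,\Delta$), so assume $c,d,c+d\ne0$. Put $N=N_{c,d}$ and $M=\O_X\big(c(c+d)F_1+d(c+d)F_2-cd\Delta\big)$; by (i), $M\numequiv N$. Let $f\colon X\to E$ be the quotient map with kernel $N$, so $N=f^{-1}(0)$ and $\O_X(N)=f^*\O_E(0)$. The key computation is $M|_N\cong\O_N$: identifying $N\cong E$ via $\psi_{c,d}$, the (reduced) intersection divisors $N\cap F_1$, $N\cap F_2$, $N\cap\Delta$ are the pullbacks $[d]^*(0)$, $[c]^*(0)$, $[c+d]^*(0)$ of the origin under the multiplication maps, and since $[n]^*(0)\sim n^2(0)$ on $E$ (the divisor $(0)$ being symmetric) the class of $M|_N$ is $\big(c(c+d)d^2+d(c+d)c^2-cd(c+d)^2\big)(0)$, which vanishes because $c(c+d)d^2+d(c+d)c^2=cd(c+d)^2$. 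Writing $M=\O_X(N)\otimes P$ with $P\in\mathrm{Pic}^0(X)$ and using that the normal bundle of $N$ is trivial, we get $P|_N=M|_N\cong\O_N$; as the kernel of the restriction $\mathrm{Pic}^0(X)\to\mathrm{Pic}^0(N)$ is $f^*\mathrm{Pic}^0(E)$ (dual of the exact sequence $0\to N\to X\to E\to 0$), it follows that $P=f^*Q$ for some $Q\in\mathrm{Pic}^0(E)$, whence $M=f^*(\O_E(0)\otimes Q)=f^*M_0$ with $\deg M_0=1$. Therefore $h^0(X,M)=h^0(E,M_0)=1$, and the unique member of $|M|$ is the fiber $f^{-1}(\mbox{point})$ cut out by the section of $M_0$ — a translate of $N$, hence an elliptic curve.

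I expect the main obstacle to be the classification step — in particular checking that the primitive normalization of $(c,d)$ is exactly what makes $N_{c,d}$ connected, and that no one‑dimensional abelian subvariety is missed; once that is in place, the remaining work is bookkeeping. A more delicate point is hidden in (ii): the restriction $M|_N$ is automatically numerically trivial (it has the class of the trivial normal bundle of $N$), but one genuinely needs triviality in $\mathrm{Pic}(N)$, not just in $\mathrm{Pic}^0(N)$, and it is precisely the explicit computation above — resting on $[n]^*(0)\sim n^2(0)$ — that supplies this.
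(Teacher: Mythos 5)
Your argument is correct, but it takes a genuinely different route from the paper's. For part (i) the paper stays entirely inside the N\'eron--Severi lattice: it writes $N\numequiv a_1F_1+a_2F_2+a_3\Delta$, uses $N^2=0$, and then combines the elementary divisibility statement of Lemma~\ref{division lemma} with the indivisibility of the class of an elliptic curve (Lemma~\ref{lemma-indivisible}) to force the coefficients into the shape $c(c+d)$, $d(c+d)$, $-cd$ with $c,d$ coprime. You instead classify the one-dimensional abelian subvarieties of $E\times E$ directly, using that every elliptic curve is a translate of such a subvariety and that $\mathrm{Hom}(E\times E,E)\cong\Z^2$ in the absence of complex multiplication; the numerical class then drops out of the intersection numbers $d^2$, $c^2$, $(c+d)^2$, computed as degrees of multiplication maps. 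For part (ii) the paper checks $M^2=0$ and $M\cdot F_1>0$ and invokes the effectivity criterion of \cite[Lemma~2.4]{Bau95} together with the structure of effective divisors of self-intersection zero, using coprimality to exclude a multiplicity $m>1$; you instead show $M\restr N\cong\O_N$ via $[n]^*(0)\sim n^2(0)$ and deduce $M\cong f^*M_0$ for the quotient map $f:X\to X/N$, whence $h^0(M)=1$ and the unique member of $|M|$ is a fiber of $f$. Your route is longer but buys more: it exhibits the curves explicitly as kernels/images of homomorphisms (a description the paper only brings in for the CM case via Hayashida--Nishi), and it pins down the actual linear --- not merely numerical --- equivalence class, giving uniqueness of the member of $|M|$ for free. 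The one point you should still write out is the connectedness of $\ker\inparen{(x,y)\mapsto cx+dy}$ for coprime $c,d$ --- immediate by completing $(c,d)$ to a matrix in $\mathrm{SL}_2(\Z)$ acting on $E\times E$ --- which you correctly flagged and which is exactly what justifies both $N=f^{-1}(0)$ and $f_*\O_X=\O_E$ in your step (ii).
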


\begin{remarks}\label{Ncd remark}\rm
   (i)
   We will denote henceforth by $\Ncd$ the elliptic curve
   specified by Proposition~\ref{all elliptic curves}(ii).
   The curves $\Ncd$, along with the curves
   $F_1$, $F_2$, and $\Delta$, constitute then a complete system
   of representatives for the numerical classes of elliptic
   curves on $X$.

   (ii)
   If we drop in Proposition~\ref{all elliptic curves}(ii)
   the assumption that $c$ and $d$ be coprime,
   then even the curves $F_1$, $F_2$, and $\Delta$ occur among
   the $\Ncd$: Take $(c,d)=(1,0)$, $(0,1)$, and $(1,-1)$
   respectively.
   However, the system $|c(c+d)F_1+d(c+d)F_2-cd\Delta|$
   then represents non-reduced curves $\Ncd$ as well:
   If $m$ is the greatest common divisor of $c$ and
   $d$, then $\Ncd=mN$, where
   $N$ is an elliptic curve. It will be useful to
   take this broader point of view in the proof of
   \ref{computed by elliptic}.
\end{remarks}

\begin{proof}
   (i) Let $N$ be an elliptic curve as in the hypothesis. We can
   write
   $$
      N\numequiv a_1F_1+a_2F_2+a_3\Delta
   $$
   with integers $a_1$, $a_2$, $a_3$.
   Then
   \begin{equation}\label{N square}
      0 = N^2 = 2(a_1a_2+a_1a_3+a_2a_3) \ .
   \end{equation}
   From the hypothesis that $N$ is not numerically equivalent to
   any of the generators
   $F_1$, $F_2$, $\Delta$, it follows that none of the
   coefficients $a_i$ can be zero.
   In fact, if $a_1=0$, say, then \eqnref{N square} implies that
   $a_2=0$ or $a_3=0$, which gives $N\numequiv a_3\Delta$ or
   $N\numequiv a_2F_2$ respectively, and this in turn implies
   that $N\numequiv\Delta$ or $N\numequiv F_2$
   (see Lemma~\ref{lemma-indivisible} below).
   The same kind of reasoning yields
   $a_1+a_2\ne 0$.
   Equation \eqnref{N square} says then that
   $$
      -\frac{a_1a_2}{a_1+a_2}=a_3 \ ,
   $$
   hence $a_1+a_2$ divides $a_1a_2$.
   This implies by
   Lemma \ref{division lemma} below that
   there are integers $c$, $d$, $m$
   such that $c$ and $d$ are coprime and
   $$
      a_1=mc(c+d) \midtext{and} a_2=md(c+d) \ .
   $$
   So we have
   $$
      N\numequiv mc(c+d)F_1+md(c+d)F_2-mcd\Delta \ .
   $$
   As the numerical class of $N$ is indivisible (see
   Lemma~\ref{lemma-indivisible} below), we get
   $m=1$.

   (ii) Let $M$ be the line bundle
   $\O_X(c(c+d)F_1+d(c+d)F_2-cd\Delta)$. We find
   $$
      M^2=0 \midtext{and} M\cdot F_1=d^2>0 \ .
   $$
   It follows -- for instance from
   \cite[Lemma 2.4]{Bau95} -- that $h^0(M)>0$, and is is easy
   to see that, up to numerical equivalence, $M$
   is of
   the form $\O_X(mN)$, where $N$ is an elliptic curve and
   $m$ a positive integer. From the equations
   $$
      mN\cdot F_1=M\cdot F_1=d^2\midtext{and} mN\cdot F_2=M\cdot F_2=c^2
   $$
   we see then that $m=1$, since $c$ and $d$ are coprime.
\end{proof}

\begin{lemma}\label{division lemma}
   Let $a$ and $b$ be non-zero
   integers such that $a+b$ divides $ab$. Then
   there are integers $c$, $d$, and $m$, such that $c$ and $d$
   are coprime and
   $$
      a=mc(c+d), \  b=md(c+d) \ .
   $$
\end{lemma}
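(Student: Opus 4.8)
The plan is to reduce to the coprime case by pulling out the gcd. Set $g=\gcd(a,b)$ and write $a=gc$, $b=gd$ with $c,d$ integers satisfying $\gcd(c,d)=1$. Since $a$ and $b$ are non-zero, $g$, $c$ and $d$ are all non-zero. Then $a+b=g(c+d)$ and $ab=g^2cd$, so the hypothesis $a+b\mid ab$ becomes $g(c+d)\mid g^2cd$, i.e.
$$
   (c+d)\mid g\,cd \ .
$$

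The key step is to observe that $\gcd(c+d,cd)=1$: any prime $p$ dividing both $c+d$ and $cd$ would divide $c$ or $d$, and then, via $c+d$, it would divide the other one as well, contradicting $\gcd(c,d)=1$. In particular $c+d\ne 0$ (otherwise $(c+d)\mid g\,cd$ would force $g\,cd=0$, which is impossible). Combining $\gcd(c+d,cd)=1$ with $(c+d)\mid g\,cd$ gives $(c+d)\mid g$.

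It remains to set $m\eqdef g/(c+d)\in\Z$, so that $a=gc=mc(c+d)$ and $b=gd=md(c+d)$, with $c$ and $d$ coprime as required. I do not expect any real obstacle here; the only point needing a word of care is that $c+d\ne0$, which is what makes the final division legitimate, and this follows at once from $a,b\ne 0$.
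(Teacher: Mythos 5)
Your proof is correct and follows essentially the same route as the paper's: extract $g=\gcd(a,b)$, note that $c+d$ is coprime to $cd$ because a common prime divisor would have to divide both $c$ and $d$, and conclude $(c+d)\mid g$ so that $m=g/(c+d)$ works. Your explicit remark that $c+d\ne 0$ is a small point of care the paper leaves implicit, but the argument is the same.
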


\begin{proof}
   Let $\ell$ be the greatest common divisor of $a$ and $b$, and
   let $c=a/\ell$ and $d=b/\ell$. Then $c$ and $d$ are coprime
   and we have
   $$
      a+b=\ell(c+d) \midtext{and} ab=\ell^2 cd \ .
   $$
   From the assumption that $a+b$ divides $ab$ we see that $c+d$
   divides $\ell cd$. Let $p$ be a prime divisor of $c+d$. Then
   $p$ also divides $\ell cd$. If $p$ were to divide $c$ or $d$,
   then, as a  prime divisor of $c+d$, it would divide
   both of them. But this cannot happen, as $c$ and $d$ are coprime.
   So none of the prime divisors of $c+d$ divides $c$ or $d$, and
   therefore $c+d$ divides $\ell$. Let now $m=\ell/(c+d)$. So we
   obtain
   \be
      a=\ell c=mc(c+d) \\
      b=\ell d=md(c+d)
   \ee
   as claimed.
\end{proof}

\begin{lemma}\label{lemma-indivisible}
   Let $X$ be an abelian surface and let $E\subset X$ be an
   elliptic curve. Then the numerical class of $E$ is
   indivisible. In other words, if
   $E\numequiv kD$ for some divisor $D$ and
   some integer $k>0$, then $k=1$.
\end{lemma}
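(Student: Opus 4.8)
The plan is to argue by contradiction using the self-intersection and the genus. Suppose $E\numequiv kD$ with $k\ge 2$ for some divisor class $D$ on the abelian surface $X$. Since $E$ is an elliptic curve on an abelian surface, adjunction (or the fact that a smooth curve of arithmetic genus $1$ on a surface with trivial canonical bundle has $E^2=0$) gives $E^2=0$. Hence $0=E^2=k^2D^2$, so $D^2=0$ as well. Now I would intersect with a fixed ample line bundle $H$ on $X$: from $E\numequiv kD$ we get $E\cdot H=k\,(D\cdot H)$, so $k$ divides the positive integer $E\cdot H$; in particular $D\cdot H=(E\cdot H)/k>0$, so $D$ is a nonzero nef-or-effective-type class of self-intersection zero, and it is \emph{primitive up to this factor} — the point being that $D$ itself is represented by an effective curve.

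The key step is to upgrade $D$ to an honest curve and compute its genus. Since $D^2=0$ and $D\cdot H>0$, a Riemann--Roch argument on the abelian surface (as used elsewhere in the paper, e.g.\ via \cite[Lemma~2.4]{Bau95} or \cite[Lemma~2.4]{Bau95}-type vanishing) shows $h^0(\O_X(D))>0$, so $D$ is linearly equivalent to an effective divisor; moreover a nonzero effective divisor of self-intersection $0$ on an abelian surface is, up to numerical equivalence, a positive multiple $\ell N$ of an elliptic curve $N$ (the same structural fact invoked in the proof of Proposition~\ref{all elliptic curves}). Then $E\numequiv kD\numequiv k\ell N$, and intersecting with $H$ once more gives $E\cdot H=k\ell\,(N\cdot H)$. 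I would now finish by a genus/arithmetic-genus comparison: $p_a(E)=1$, whereas the divisor $k\ell N$ with $k\ell\ge 2$ has arithmetic genus strictly larger than $1$ (compute $p_a$ from $(k\ell N)^2=0$ and $(k\ell N)\cdot K_X=0$, so $p_a=1$ — this alone is not yet a contradiction, so instead I compare the classes directly: $E$ and $N$ are both elliptic curves with $E\numequiv (k\ell)N$, and restricting to, say, the curve $N$ or using that an irreducible curve cannot be numerically equivalent to a proper multiple of another irreducible curve on an abelian surface via the intersection with a suitable translate).

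The cleanest finish, and the one I would actually carry out, avoids the genus subtlety: take any irreducible elliptic curve $E'\numequiv N$ through a general point and compute $E\cdot E'=k\ell\,N^2=0$, while on the other hand, since $E'$ is a translate-class of $N$ and $E$ is irreducible not contained in $E'$, positivity of intersection combined with $E\numequiv k\ell N$ forces, upon intersecting $E$ with an \emph{ample} class $A$ with $A\cdot N=1$ (such $A$ exists because the class of $N$ is part of a basis of $\NS(X)/(\text{torsion})$, or can be arranged after noting $N\cdot H$ for suitable $H$), the equality $E\cdot A=k\ell$. The contradiction is extracted from the fact that an elliptic curve generates an \emph{indivisible} subgroup of $X$ under the induced map to $\Pic^0$, equivalently its class is primitive in $\NS(X)$ because the quotient $X/E$ is again an abelian variety and the pullback of an ample class is primitive on $E$ — so $k\ell=1$, hence $k=1$. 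The main obstacle is this last structural input: making precise \emph{why} the class of a smooth elliptic curve on an abelian surface is primitive in $\NS(X)$; I expect the author to handle it via the quotient abelian surface $X/E$ (which exists since $E$ is an abelian subvariety up to translation) and the observation that the class of a fiber of $X\to X/E$ is primitive, being the pullback of a point.
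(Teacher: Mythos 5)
Your reduction is sound and matches the paper's up to the last step: $E^2=0$ forces $D^2=0$, intersecting with an ample $H$ gives $D\cdot H>0$, Riemann--Roch (the \cite[Lemma~2.4]{Bau95}-type statement) makes $D$ effective, and an effective class of self-intersection zero on an abelian surface is $\numequiv\ell N$ for an elliptic curve $N$, so $E\numequiv k\ell N$. The problem is how you close the argument. Your "cleanest finish" derives $k\ell=1$ from the assertion that "the class of a smooth elliptic curve on an abelian surface is primitive in $\NS(X)$" --- but that assertion \emph{is} the lemma being proved, so the argument is circular. The justification you sketch for it (pass to $X/E$ and say the fiber class is primitive "being the pullback of a point") does not close the circle either: pullback does not preserve primitivity in general (the degree-$4$ class $[2]^*(\mathrm{pt})$ on an elliptic curve is divisible by $4$), so some additional geometric input is genuinely needed. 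Your other two attempts also fail, as you partly acknowledge: the adjunction computation gives $p_a(k\ell N)=1$ regardless of $k\ell$, hence no contradiction; and the existence of an ample $A$ with $A\cdot N=1$ is unjustified --- $\NS(X)$ with the intersection form need not be unimodular (for $E\times E$ without CM the Gram matrix of $F_1,F_2,\Delta$ has determinant $2$), and in any case deducing existence of such an $A$ from primitivity of $[N]$ would again presuppose the lemma.

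The missing idea, which is how the paper finishes, is to use the \emph{irreducibility} of $E$ against the structure of the linear system $|k\ell N|$. Since numerical and algebraic equivalence of divisors coincide on an abelian surface, $E\numequiv k\ell N$ means a suitable translate of $E$ lies in the linear series $|k\ell N|$. But $\O_X(k\ell N)$ is pulled back from the elliptic curve $X/N$ (its restriction to each fiber of $X\to X/N$ has degree $k\ell\,N^2=0$), so every member of $|k\ell N|$ is a sum of $k\ell$ fibers, hence reducible or non-reduced whenever $k\ell>1$. As $E$ is integral, this forces $k=\ell=1$. If you supply this last step, your proof coincides with the paper's.
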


\begin{proof}
   Fix an ample divisor $H$. Then $H\cdot D=\frac 1kH\cdot E>0$
   and $D^2=\frac1{k^2}E^2=0$, which implies that $\O_X(D)$ is
   effective (see e.g.~\cite[Lemma~2.4]{Bau95}).
   Then $\O_X(D)$, being effective and of zero self-intersection,
   must be numerically equivalent to a positive
   multiple $mE'$ of an elliptic curve $E'$. So we have
   $E\numequiv kD\numequiv kmE'$. A suitable translate of $E$ is
   therefore contained in the linear series $|kmE'|$. But this
   can only happen if $k=m=1$, because all elements of $|kmE'|$
   are reducible if $km>1$.
\end{proof}

   We turn now to the proof of Theorem~\ref{computed by
   elliptic}. The proof draws from two sources: First, we use
   a classical result from the geometry of numbers in
   order to show that every ample line bundle admits a submaximal
   elliptic curve. Secondly, we apply a result from \cite{BauSze98}
   in order to prove that no curve of genus $>1$ can be
   \engqq{more submaximal} than the elliptic ones.

   The result from the geometry of numbers that we will need is
   Hermite's classical theorem (see e.g. \cite[Sect.~II.3.2]{Cas97}):

\begin{theorem}[Hermite]\label{Hermite-Lagrange}
   Let $Q$ be a positive definite quadratic form of two
   variables,
   $$
      Q(x,y)=ax^2+2bxy+cy^2 \ ,
   $$
   and let $\delta=ac-b^2$ be its determinant.
   Then there is a non-zero point $p\in\Z^2$ such that
   $$
      Q(p)\le\sqrt{\frac 43\delta} \ .
   $$
\end{theorem}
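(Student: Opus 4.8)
The plan is to run the classical reduction argument for positive definite binary quadratic forms (as in \cite[Sect.~II.3.2]{Cas97}). Since $Q$ is positive definite there is a constant $\lambda>0$ with $Q(x,y)\ge\lambda(x^2+y^2)$ for all $(x,y)$, so $Q$ takes only finitely many values below any given bound on $\Z^2$, and in particular the minimum
$$
   m=\min\set{Q(p)\with p\in\Z^2\setminus\{0\}}
$$
is attained, say at a vector $p_1\in\Z^2\setminus\{0\}$. First I would observe that such a minimizer $p_1$ is automatically primitive: if $p_1=kq$ with an integer $k\ge 2$, then $Q(q)=Q(p_1)/k^2<m$, contradicting minimality. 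Hence $p_1$ can be completed to a $\Z$-basis $(p_1,p_2)$ of $\Z^2$.

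Next I would perform the unimodular substitution $(x,y)\mapsto x\,p_1+y\,p_2$, which transforms $Q$ into a new form $Q'(x,y)=a'x^2+2b'xy+c'y^2$ with $a'=Q(p_1)=m$ and $c'=Q(p_2)$; since the substitution matrix lies in ${\rm GL}_2(\Z)$, the determinant is preserved, i.e. $a'c'-b'^2=\delta$. Replacing $p_2$ by $p_2+kp_1$ for a suitable $k\in\Z$ leaves $a'$ and the basis property unchanged while changing $b'$ into $b'+ka'$, so we may arrange $\abs{2b'}\le a'$. Finally, minimality of $m$ gives $c'=Q(p_2)\ge m=a'$, whence
$$
   \delta=a'c'-b'^2\ge a'\cdot a'-\frac{a'^2}4=\frac34\,a'^2 \ ,
$$
and therefore $Q(p_1)=a'\le\sqrt{\frac43\delta}$, which is the assertion.

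The argument is essentially routine; the only points deserving a careful word are the existence of the minimizer (discreteness of the value set $Q(\Z^2)$, a consequence of positive definiteness), the completion of a primitive vector to a $\Z$-basis of $\Z^2$, and the invariance of $\delta$ under the coordinate change -- equivalently, that $\delta$ is the determinant of the Gram matrix and hence a genuine invariant of the form. None of these presents a real obstacle, so I do not expect any essential difficulty here beyond the bookkeeping.
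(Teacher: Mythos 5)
Your proof is correct. Note that the paper itself does not prove this statement at all -- it is quoted as a classical result with a reference to Cassels -- and your argument is precisely the standard reduction-theory proof one finds there: pick a nonzero minimizer $p_1$ (which exists by positive definiteness and is primitive by minimality), complete it to a $\Z$-basis, normalize the cross term to $\abs{2b'}\le a'$ by a shear, and combine $c'\ge a'$ with the invariance of $\delta$ to get $\delta\ge\frac34 a'^2$. All the steps you flag as needing care (attainment of the minimum, completion of a primitive vector to a basis, invariance of the Gram determinant under ${\rm GL}_2(\Z)$) are handled correctly, so nothing is missing.
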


\begin{proof}[Proof of Theorem \ref{computed by elliptic}]
   (i) Let $L=\O_X(a_1F_1+a_2F_2+a_3\Delta)$ be an ample line
   bundle on $X$. Its intersection number with the elliptic
   curve $\Ncd$ is a quadratic from in the variables $c$ and $d$:
   $$
      Q(c,d)\eqdef L\cdot \Ncd = \matr{c & d}\matr{a_2+a_3 & a_3 \\ a_3 & a_1+a_3} \matr{c \\ d} \ .
   $$
   It
   follows from the ampleness of $L$
   (using the inequalities (\ref{ampleness conditions}))
   that $Q$ is positive
   definite. The discriminant of $Q$ is
   $$
      \delta=a_1a_2+a_1a_3+a_2a_3=L^2/2  \ .
   $$
   Applying now
   Theorem~\ref{Hermite-Lagrange} we find that there
   is a non-zero point $(c,d)\in\Z^2$ such that
   $$
      Q(c,d)\le\sqrt{\frac 43\delta} \ .
   $$
   This implies that
   \begin{equation}\label{eq Hermite submax}
      L\cdot\Ncd\le\sqrt{\frac 43\delta}=\sqrt{\frac 23 L^2} \ .
   \end{equation}
   So in any event $\Ncd$ is a submaximal curve for $L$. ($\Ncd$
   is either
   an elliptic curve or a multiple of an elliptic curve, see
   Remark~\ref{Ncd remark}.b).
   So we
   have
   $$
      \eps(L)\le\sqrt{\frac 23 L^2} \ .
   $$

   (ii)
   To complete the proof we now
   show that there cannot be a curve of genus
   $>1$ computing $\eps(L)$. This can be seen as follows:
   It is a consequence of
   \cite[Theorem~A.1(b)]{BauSze98} -- or more precisely
   of the proof of
   that theorem -- that for an irreducible curve $C$ of
   arithmetic genus $>1$ on an
   abelian surface, one has
   $$
      \q LCx \ge \sqrt{\frac 78 L^2} \ .
   $$
   This inequality, together with
   (\ref{eq Hermite submax}), guarantees that
   one of the curves $\Ncd$
   computes $\eps(L)$.
\end{proof}

   Having established that all Seshadri constants on $X$ are
   computed by elliptic curves, we are now able to provide a
   complete picture of the Seshadri
   constants of all ample line bundles. In order to formulate the
   result in the most compact way, it is best to keep in mind
   the following easy lemma.

\begin{lemma}\label{permutations}
   Let $L=\O_X(a_1F_1+a_2F_2+a_3\Delta)$ be an ample line bundle,
   let $\pi$ be a permutation of the numbers 1,2,3, and let
   $L^\pi=\O_X(a_{\pi(1)}F_1+a_{\pi(2)}F_2+a_{\pi(3)}\Delta)$ be
   the
   line bundle with permuted coefficients. Then $L^\pi$ is ample
   as well, and
   $$
      \eps(L^\pi)=\eps(L) \ .
   $$
\end{lemma}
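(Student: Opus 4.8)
The claim is that permuting the coefficients $(a_1,a_2,a_3)$ of an ample line bundle $L=\O_X(a_1F_1+a_2F_2+a_3\Delta)$ on $X=E\times E$ yields another ample bundle with the same Seshadri constant. The strategy is to exhibit, for each transposition generating the symmetric group $S_3$, an automorphism of $X$ (or an algebraic self-correspondence, but automorphisms will do) that permutes the numerical classes of $F_1$, $F_2$, $\Delta$ accordingly. Since $\eps(L)$ on an abelian surface depends only on the numerical class of $L$, and automorphisms preserve intersection numbers (hence ampleness and the whole collection of irreducible curves over which the infimum in the definition of $\eps$ is taken), transporting $L$ by such an automorphism shows $\eps(L^\pi)=\eps(L)$. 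Ampleness of $L^\pi$ is then automatic — alternatively one notes directly that the inequalities \eqnref{ampleness conditions} are visibly symmetric in $a_1,a_2,a_3$.

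First I would handle the transposition swapping $a_1$ and $a_2$: the involution $\sigma:(x,y)\mapsto(y,x)$ of $E\times E$ interchanges $F_1$ and $F_2$ and fixes $\Delta$, so $\sigma^*L$ has class $a_2F_1+a_1F_2+a_3\Delta$. Next, for the transposition swapping $a_2$ and $a_3$ I would use an automorphism of the form $(x,y)\mapsto(x,\,y-x)$ (i.e.\ $\mathrm{id}\times(-\,\cdot\,)$ composed with the shear): this map sends $\Delta=\{y=x\}$ to $F_1=\{\text{second coordinate }0\}$... — more carefully, I would pick the group automorphism $\phi\in\mathrm{Aut}(E\times E)$ given by the integer matrix $\left(\begin{smallmatrix}1&0\\1&-1\end{smallmatrix}\right)$ acting on $E\times E$, and check on the three divisor classes that it realizes the transposition $(2\,3)$ (its fibers: $\{x=\text{const}\}$ stays $F_2$; $\{y=\text{const}\}$ maps to $\Delta$; $\Delta$ maps to $\{y=\text{const}\}$, i.e.\ $F_1$). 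Composing these two transpositions generates all of $S_3$, which gives the result for every $\pi$.

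The routine part is the bookkeeping: writing down the matrices in $\mathrm{GL}_2(\Z)$ that act on $E\times E$, computing the images of the three reference classes, and confirming the induced permutations — this is a short finite check and I would not belabor it. The only mild subtlety, and the point I would be most careful about, is making sure each chosen map is an actual automorphism of the abelian surface (equivalently, the matrix lies in $\mathrm{GL}_2(\Z)$, which it does for $\left(\begin{smallmatrix}0&1\\1&0\end{smallmatrix}\right)$ and $\left(\begin{smallmatrix}1&0\\1&-1\end{smallmatrix}\right)$) and that pulling back by an automorphism preserves $\eps$ — the latter because such a map is an isomorphism of varieties, hence induces a bijection on irreducible curves through a point and preserves all intersection multiplicities, so the infimum defining $\eps(L,0)$ is unchanged; combined with the translation-invariance of Seshadri constants on abelian surfaces already recalled in Section~\ref{sect SC}, this yields $\eps(L^\pi)=\eps(L)$. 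If one prefers to avoid automorphisms altogether, an alternative closing argument is to observe that Theorem~\ref{computed by elliptic} together with Proposition~\ref{all elliptic curves} expresses $\eps(L)$ as a minimum over $\{a_1+a_2,\ a_2+a_3,\ a_3+a_1\}\cup\{\,Q(c,d):\gcd(c,d)=1\,\}$ where the intersection numbers $L\cdot\Ncd=(a_2+a_3)c^2+2a_3cd+(a_1+a_3)d^2$ transform under a suitable change of the pair $(c,d)$ exactly the way the coefficients are permuted — but the automorphism argument is cleaner and does not rely on the later computation, so that is the route I would take.
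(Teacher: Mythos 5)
Your proof is correct, but it takes a genuinely different route from the paper's. The paper argues purely numerically: the intersection matrix of $(F_1,F_2,\Delta)$ is invariant under permuting the three classes, so the coefficient permutation preserves $L^2$, preserves the set of numerical classes of elliptic curves (via the classification in Proposition~\ref{all elliptic curves}), and preserves all pairings $L\cdot N$; since by Theorem~\ref{computed by elliptic} every Seshadri constant on $X$ is computed by an elliptic curve, $\eps(L^\pi)=\eps(L)$ follows. You instead realize each transposition by an honest automorphism of $X$ --- the swap $(x,y)\mapsto(y,x)$ for $F_1\leftrightarrow F_2$ and the involution $(x,y)\mapsto(x,x-y)$ for a second transposition --- and invoke invariance of $\eps$ under pullback by isomorphisms together with homogeneity. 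This is logically more self-contained (it uses neither Theorem~\ref{computed by elliptic} nor the classification of elliptic curves, and would survive even if Seshadri constants here were not computed by elliptic curves), at the cost of the small verification that the chosen matrices lie in $\mathrm{GL}_2(\Z)$ and act correctly on the three classes. Two small points of bookkeeping: your first candidate $(x,y)\mapsto(x,y-x)$ does \emph{not} work --- it sends a fiber to a translate of the graph of $-1$, whose class is $2F_1+2F_2-\Delta$ (i.e.\ $N_{1,1}$), not $\Delta$ --- but your corrected choice $(x,y)\mapsto(x,x-y)$ does; and with the convention that $F_i$ is the fiber of the $i$-th projection this map fixes $F_1$ and swaps $F_2$ with $\Delta$ (your fiber labels are reversed), which is immaterial since any two distinct transpositions generate $S_3$.
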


\begin{proof}
   The intersection matrix of $(F_1,F_2,\Delta)$ is
   $$
      \matr{0 & 1 & 1 \\ 1 & 0 & 1 \\ 1 & 1 & 0} \ ,
   $$
   and any permutation of the triplet $(F_1,F_2,\Delta)$ has the
   same intersection matrix. This implies that $(L^\pi)^2=L^2$,
   and, if
   the linear series $|b_1F_1+b_1F_2+b_3\Delta|$ represents an
   elliptic curve, then the
   linear series with
   permuted coefficients also represents an elliptic curve~$N^\pi$.
   This curve $N^\pi$
   satisfies
   $$
      L^\pi\cdot N^\pi=L\cdot N \ ,
   $$
   so that if $N$ computes $\eps(L)$, then $N^\pi$ computes
   $\eps(L^\pi)$.
\end{proof}

   Our result can then be stated as follows, proving Theorem~1
   from the introduction.

\begin{theorem}\label{SC theorem}
   Let $E$ be an elliptic curve without complex multiplication
   and let
   Let $L=\O_X(a_1F_1+a_2F_2+a_3\Delta)$ be any ample line bundle
   on the abelian surface $X=E\times E$.
   Assume that
   $$
      a_1\ge a_2\ge a_3
   $$
   \inparen{which in view of Lemma \ref{permutations} means no loss in
   generality}.
   Then $\eps(L)$ is the minimum of the following
   numbers:

   \begin{itemize}
   \item[(1)]
      $a_2+a_3$,
   \item[(2)]
      $\displaystyle\frac {a_2a_1^2+a_1a_2^2+a_3(a_1+a_2)^2}{\gcd(a_1,a_2)^2}$,
   \item[(3)]
      $\min\set{a_1d^2+a_2c^2+a_3(c+d)^2
      \with c ,d\in\N \mbox{ coprime}, \, c+d<\frac 1{\sqrt 2}(a_1+a_2)}$.
   \end{itemize}
\end{theorem}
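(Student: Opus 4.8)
The plan is to combine Theorem~\ref{computed by elliptic} with the classification of elliptic curves from Proposition~\ref{all elliptic curves} (in the broad form of Remark~\ref{Ncd remark}); this turns the computation of $\eps(L)$ into an elementary lower bound for a binary quadratic form. By Theorem~\ref{computed by elliptic}, $\eps(L)$ is computed by an elliptic curve, and since $L\cdot C$ depends only on the numerical class of $C$, every elliptic curve has a translate through the chosen point, and $F_1=N_{1,0}$, $F_2=N_{0,1}$, $\Delta=N_{1,-1}$, we obtain
$$
   \eps(L)=\min\Big(a_2+a_3,\ a_1+a_3,\ a_1+a_2,\ \min_{\gcd(c,d)=1}Q(c,d)\Big),
   \qquad
   Q(c,d)\eqdef L\cdot N_{c,d}=a_1d^2+a_2c^2+a_3(c+d)^2 .
$$
Since $a_1\ge a_2\ge a_3$, the minimum of the first three numbers is $a_2+a_3=(1)$, and a direct computation gives $L\cdot N_{a_1/g,\,a_2/g}=(a_2a_1^2+a_1a_2^2+a_3(a_1+a_2)^2)/g^2$ with $g=\gcd(a_1,a_2)$, which is the number (2); finally (3) is an explicit minimum of values $Q(c,d)$. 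Thus each of (1), (2), (3) is $L\cdot N$ for an elliptic curve $N$ (or a minimum of such), so $\eps(L)\le\min\{(1),(2),(3)\}$, and everything comes down to the reverse inequality $Q(c,d)\ge\min\{(1),(2),(3)\}$ for every coprime pair.

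For the reverse inequality I would split into two cases. If $a_3\ge 0$ then $a_1,a_2,a_3\ge 0$ and $Q(c,d)\ge a_1d^2+a_2c^2$; checking the cases $cd\ne 0$ (where this is $\ge a_1+a_2\ge a_2+a_3$) and $d=0$ or $c=0$ (where coprimality forces the other coordinate to be $\pm1$, giving $Q=a_2+a_3$ resp.\ $Q=a_1+a_3\ge a_2+a_3$) shows $Q(c,d)\ge a_2+a_3=(1)$ for all coprime pairs, so in fact $\eps(L)=a_2+a_3$. If $a_3<0$ I would first reduce to $c,d\ge 0$: replacing $(c,d)$ by $(|c|,|d|)$ keeps the pair coprime and changes $Q$ by $2a_3(|cd|-cd)\le 0$, while the degenerate coprime pairs $(1,0),(0,1)$ give $a_2+a_3$ and $a_1+a_3\ge a_2+a_3$. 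For coprime $c,d\ge 1$ the crucial point is the convexity identity
$$
   (a_1+a_2)(a_1d^2+a_2c^2)-a_1a_2(c+d)^2=(a_1d-a_2c)^2\ge 0 ,
$$
which gives $Q(c,d)\ge\frac{L^2/2}{a_1+a_2}(c+d)^2$, with equality exactly when $(c,d)=(a_1/g,a_2/g)$ — the class whose intersection number is (2). If $c+d<\frac{1}{\sqrt{2}}(a_1+a_2)$ then $Q(c,d)\ge(3)$ by definition; if $c+d\ge\frac{1}{\sqrt{2}}(a_1+a_2)$ then $(c+d)^2\ge\frac{1}{2}(a_1+a_2)^2$ and hence
$$
   Q(c,d)\ \ge\ \frac{(a_1+a_2)L^2}{4}\ =\ \frac{(a_1+a_2)(a_1a_2+a_1a_3+a_2a_3)}{2}\ \ge\ a_2+a_3=(1) ,
$$
the last step because $a_1a_2+a_1a_3+a_2a_3=L^2/2$ is a positive integer and $a_1-a_2\ge 0>2a_3$. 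This covers every elliptic curve, and together with $\eps(L)\le\min\{(1),(2),(3)\}$ and $\eps(L)=\min_N L\cdot N$ it gives the theorem.

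The algebraic steps (the convexity identity, the identification of (2), and $L^2=2(a_1a_2+a_1a_3+a_2a_3)$) are routine. I expect the delicate part to be the case $a_3<0$: justifying the reduction to $c,d\ge 0$ and, above all, recognizing that $\frac{1}{\sqrt{2}}(a_1+a_2)$ is the correct threshold — that once $c+d$ reaches this size the crude estimate already forces $Q\ge a_2+a_3$ for every ample $L$ with $a_1\ge a_2\ge a_3$ (which boils down to $L^2\ge 2$ together with $a_1\ge a_2$), whereas below the threshold only finitely many classes survive and must be enumerated. It is worth noting explicitly that $N_{a_1/g,a_2/g}$ is the unique class on which the convexity estimate is sharp, which is why its intersection number is singled out as (2) rather than subsumed into (1) or (3).
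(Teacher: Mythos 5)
Your proof is correct, and its skeleton is the same as the paper's: reduce via Theorem~\ref{computed by elliptic} to minimizing $L\cdot N$ over the classes $F_1,F_2,\Delta,N_{c,d}$, and control $Q(c,d)=L\cdot N_{c,d}$ through the identity $(a_1+a_2)(a_1d^2+a_2c^2)-a_1a_2(c+d)^2=(a_1d-a_2c)^2$, which is exactly what produces the threshold $c+d<\tfrac1{\sqrt2}(a_1+a_2)$ and singles out $(c,d)=(a_1/g,a_2/g)$ as the class where the estimate is sharp. The genuine difference lies in how classes beyond the threshold are discarded. The paper shows that $(a_1+a_2)^2>2(a_1d-a_2c)^2(c+d)^2$ is \emph{necessary for submaximality}, i.e.\ it compares $Q(c,d)$ with $\sqrt{L^2}$ and then uses the Hermite-based existence of a submaximal elliptic curve (inequality \eqnref{eq Hermite submax}) to conclude that non-submaximal classes cannot compute $\eps(L)$. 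You instead compare $Q(c,d)$ directly with $L\cdot F_1=a_2+a_3$, using only $L^2\ge2$ and $a_1-a_2\ge0>2a_3$; this makes the step self-contained (no reuse of the Hermite bound), and your explicit treatment of the sign reduction $(c,d)\mapsto(|c|,|d|)$ and of the trivial case $a_3\ge0$ is cleaner than the paper's rather terse handling. One small remark: your own estimates show that the pair $(a_1/g,a_2/g)$ either falls inside the range of (3) (when $g\ge2$) or satisfies $Q\ge a_2+a_3$ (when $g=1$), so under your argument item (2) is never the strict minimum; your closing comment that (2) cannot be subsumed into (1) or (3) is therefore not quite right. This is harmless — listing a redundant candidate does not affect the asserted equality — but the real reason (2) appears as a separate item is the paper's submaximality criterion, under which the coprime pair $(a_1,a_2)$ escapes the constraint $(c+d)^2<\tfrac12(a_1+a_2)^2$.
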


\begin{proof}
   We know by Theorem~\ref{computed by elliptic} that
   $\eps(L)$ is in any event
   computed by an elliptic curve. So $\eps(L)$ is the minimal
   degree $L\cdot N$, where $N$ runs through all elliptic curves
   on $X$, i.e.,
   $$
      \eps(L)=\min(
      \set{L\cdot F_1, L\cdot F_2, L\cdot \Delta}
      \union
      \set{L\cdot \Ncd\with\mbox{$c$ and $d$ coprime integers}}
      )
      \ .
   $$
   Expression (1) in the statement accounts for the curves $F_1$,
   $F_2$, and $\Delta$. The point now is to explicitly
   restrict the range of
   elliptic curves $\Ncd$ that have to be taken into account.

   Under our assumption that $a_1\ge a_2\ge a_3$ we see from the
   ampleness conditions (\ref{ampleness conditions}) that
   $a_1$ and $a_2$ must both be positive.
   We now determine when the elliptic curve
   $\Ncd$ is
   submaximal for $L$, i.e., when $L\cdot\Ncd<\sqrt{L^2}$ holds.
   In terms of coefficients this condition
   evaluates to the inequality
   $$
      (a_2+a_3)c^2+2a_3cd+(a_1+a_3)d^2 < \sqrt{2(a_1a_2+a_1a_3+a_2a_3)} \ .
   $$
   A calculation shows that the latter condition can equivalently
   be expressed as
   \be
      \lefteqn{\left(a_3(c+d)^2+\frac{(a_1d^2+a_2c^2)(c+d)^2-(a_1+a_2)}{(c+d)^2}\right)^2}  \\
      && < \frac 1{(c+d)^4}\left((a_1+a_2)^2-2(a_1d-a_2c)^2(c+d)^2\right) \ .
   \ee
   The crucial point is now that for
   this inequality to be satisfied -- given $a_1,a_2,a_3$ --
   it is necessary to have
   \begin{equation}\label{cd inequality}
      (a_1+a_2)^2 > 2(a_1d-a_2c)^2(c+d)^2 \ ,
   \end{equation}
   and this inequality narrows down the potential submaximal curves
   $\Ncd$
   to a finite set: First, we see that $c$ and $d$ must be
   both positive or both negative, as otherwise
   $$
      2(a_1d-a_2c)^2(c+d)^2\ge
      2(a_1\abs d-a_2\abs c)^2(c+d)^2\ge (a_1+a_2)^2 \ .
   $$
   We may therefore assume $c>0$ and $d>0$. (Note that
   $\Ncd=N_{-c,-d}$.)
   Furthermore, (\ref{cd inequality}) implies that
   $$
      \frac{a_2}{a_1}=\frac dc \midtext{or}
      (c+d)^2 < \frac 12(a_1+a_2)^2 \ .
   $$
   As $c$ and $d$ are coprime, the first case applies only to one
   elliptic curve, namely to
   $$
      N_{{a_1}/{\gcd(a_1,a_2)},\,{a_2}/{\gcd(a_1,a_2)}} \ ,
   $$
   which is taken account for by expression (2) of the theorem.
   The second case yields the range expressed in (3).
\end{proof}

\begin{remarks}\rm
   (i) Theorem \ref{SC theorem} shows that it is quick and easy
   to compute
   $\eps(L)$ from the coefficients $a_1,a_2,a_3$ of $L$:
   All one needs is to take the minimum of finitely many numbers.

   (ii) Note that there would be no harm if we extended the
   minimum in (3) over \textit{all} pairs of positive integers
   $c$ and $d$ with $c+d<\frac 1{\sqrt 2}(a_1+a_2)$, whether or
   not they are coprime. From a computational point of view it
   may in fact be more efficient to do so, forgoing any
   coprimality tests.
\end{remarks}

   Theorem \ref{SC theorem} allows not only to compute Seshadri
   constants, but it also yields all submaximal curves as the
   following examples illustrate.
   Table \ref{table eps} gives further concrete examples.

\begin{examples}\label{example eps}\rm
   (i) Consider the ample bundle $L=\O_X(7F_1+6F_2-3\Delta)$ that
   was mentioned briefly at the end of Example \ref{warm-up example}.
   Applying Theorem \ref{SC theorem} we find that
   $N_{1,1}$ calculates $\eps(L)=1$, and this is the only
   submaximal curve for $L$.

   (ii) As for an example at the other extreme:
   The ample bundle $L=\O_X(33F_1+9F_2-7\Delta)$ admits three
   submaximal curves, $F_1,N_{3,1},N_{4,1}$. All three of them
   compute $\eps(L)$ in this case. This is a case
   where the maximal possible number of submaximal curves
   occurs.
\end{examples}

\begin{table}
   \centering\footnotesize

\begin{tabular}{ccc@{\qquad}cccll}
$a_1$ & $a_2$ & $a_3$ & $L^2$ & $\sqrt{\frac23L^2}$ & $\eps(L)$ & curves computing $\eps(L)$ & weakly submaximal \\ \tline
3 & 2 & $-1$ & 2 & $\approx 1,15$ & 1 & $ F_1, N_{ 1,1 } $ & $F_1, N_{1 ,1 } $ \\
3 & 3 & $-1$ & 6 & 2 & 2 & $ F_1, F_2, N_{ 1,1 } $ & $F_1, F_2, N_{1 ,1 } $ \\
4 & 3 & $-1$ & 10 & $\approx 2,58$ & 2 & $ F_1 $ & $ F_1, N_{ 1,1 } $ \\
5 & 3 & $-1$ & 14 & $\approx 3,06$ & 2 & $ F_1 $ & $ F_1 $ \\
5 & 4 & $-2$ & 4 & $\approx 1,63$ & 1 & $ N_{ 1,1 } $ & $ F_1, N_{ 1,1 } $ \\
7 & 4 & $-2$ & 12 &  $\approx 2,83$ & 2 & $ F_1 $ & $ F_1, N_{ 1,1 } $ \\
7 & 6 & $-3$ & 6 & 2 & 1 & $ N_{ 1,1 } $ & $ N_{ 1,1 } $ \\
10 & 7 & $-4$ & 4 & $\approx 1,63$ & 1 & $ N_{ 1,1 } $ & $ N_{ 1,1 }, N_{ 2,1 } $ \\
12 & 9 & $-5$ & 6 & 2 & 1 & $ N_{ 1,1 } $ & $ N_{ 1,1 } $ \\
17 & 10 & $-6$ & 16 & $\approx 3,27$ & 3 & $ N_{ 1,1 }, N_{ 2,1 } $ & $ F_1, N_{ 1,1 }, N_{ 2,1 } $ \\
20 & 11 & $-7$ & 6 & 2 & 1 &  $ N_{ 2,1 } $ & $ N_{ 2,1 } $ \\
32 & 9 & $-7$ & 2 & $\approx 1,15$ & 1 & $ N_{ 3,1 }, N_{ 4,1 } $ & $ N_{ 3,1 }, N_{ 4,1 } $ \\
33 & 9 & $-7$ & 6 & 2 & 2 & $ F_1, N_{ 3,1 }, N_{ 4,1 } $ & $ F_1, N_{ 3,1 }, N_{ 4,1 } $ \\
34 & 9 & $-7$ & 10 & $\approx 2,58$ & 2 & $ F_1 $ & $ F_1, N_{ 3,1 }, N_{ 4,1 } $ \\
26 & 14 & $-9$ & 8 & $\approx 2,31$ & 1 & $ N_{ 2,1 } $ & $ N_{ 2,1 } $ \\
73 & 13 & $-11$ & 6 & 2 & 2 & $ F_1, N_{ 5,1 }, N_{ 6,1 } $ & $ F_1, N_{ 5,1 }, N_{ 6,1 } $ \\
54 & 14 & $-11$ & 16 & $\approx 3,27$ & 3 & $ F_1, N_{ 4,1 } $ & $ F_1, N_{ 3,1 }, N_{ 4,1 } $ \\
45 & 15 & $-11$ & 30 & $\approx 4,47$ & 4 & $ F_1, N_{ 3,1 } $ & $ F_1, N_{ 3,1 } $ \\
36 & 16 & $-11$ & 8 & $\approx 2,31$ & 1 & $ N_{ 2,1 } $ & $ N_{ 2,1 } $ \\
32 & 17 & $-11$ & 10 & $\approx 2,58$ & 1 & $ N_{ 2,1 } $ & $ N_{ 2,1 } $ \\
52 & 30 & $-19$ & 4 & $\approx 1,63$ & 1 & $ N_{ 2,1 } $ & $ N_{ 2,1 }, N_{ 5,3 } $
\\ \tline
\end{tabular}

   \caption{\label{table eps}
      Seshadri constants of the line bundles
      $L=O_X(a_1F_1+a_2F_2+a_3\Delta)$
      on $X=E\times E$.
      The last column lists all elliptic curves $C$ such that $L\cdot C\le \sqrt{L^2}$.
      }
\end{table}

\section{The Seshadri function on the nef cone}\label{sect seshadri function}

   Our purpose now is to apply the results of the
   previous section in order to gain insight into the
   behaviour of the Seshadri function on the nef cone of $E\times
   E$.

   Consider first an arbitrary smooth projective variety $Y$. The
   definition of Seshadri constants extends immediately to
   ample (or nef) $\Q$-divisors, and also to ample (or nef)
   $\R$-divisors (using either
   definition (*) from the introduction or the alternative
   characterization at the beginning of Sect.~\ref{sect SC}).
   Further, the definition clearly extends to nef divisors.
   We get thus for fixed $y\in Y$ a function
   $$
      \eps_y:\Nef(Y)\longto\R,\quad L\longmapsto\eps(L,y)
   $$
   on the nef cone of $Y$,
   which we will refer to as the \textit{Seshadri function} at
   $y$.

   Considering now an abelian variety $A$,
   we obtain a
   function
   $$
      \eps:\Nef(A)\longto\R,\quad L\longmapsto\eps(L) \
   $$
   that is independent of the point.
   Our first observation is:

\begin{proposition}
   Let $A$ be an abelian variety. Then
   the Seshadri function $\eps$ is concave and continuous.
\end{proposition}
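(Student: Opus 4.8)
The plan is to establish concavity first and then deduce continuity from it. For concavity, the key is the alternative description of Seshadri constants on an abelian variety in terms of curves: for any nef $\R$-divisor class $L$ and any point $x$ (which we may fix once and for all, say $x=0$, by homogeneity), one has $\eps(L)=\inf_C (L\cdot C)/\mult_0(C)$, the infimum taken over all irreducible curves $C$ through $0$. For each such fixed $C$ the map $L\mapsto (L\cdot C)/\mult_0(C)$ is a \emph{linear} functional on $\NS(A)_{\R}$. Thus $\eps=\inf_C \ell_C$ is an infimum of linear functions, hence concave on the nef cone (an infimum of concave functions is concave, and one checks that the set where all these functionals are simultaneously defined — the nef cone — is itself convex, so the domain causes no trouble). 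I would spell this out: given nef classes $L_1,L_2$ and $t\in[0,1]$, for every $C$ we have $\ell_C(tL_1+(1-t)L_2)=t\,\ell_C(L_1)+(1-t)\,\ell_C(L_2)\ge t\,\eps(L_1)+(1-t)\,\eps(L_2)$, and taking the infimum over $C$ gives $\eps(tL_1+(1-t)L_2)\ge t\,\eps(L_1)+(1-t)\,\eps(L_2)$.

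For continuity I would invoke the standard fact that a concave function on a convex set is automatically continuous on the \emph{interior} of that set; here the interior of $\Nef(A)$ is the ample cone, so $\eps$ is continuous there. The remaining issue — continuity up to the boundary of the nef cone — I would handle by a direct sandwiching argument rather than appealing to a general theorem, since concavity alone does not force boundary continuity. The upper bound is easy: $\eps(L)\le \sqrt[n]{L^n}$ (Kleiman), and $L\mapsto L^n$ is a polynomial, hence continuous, and vanishes on the boundary of the nef cone in the relevant directions; combined with $\eps\ge 0$ this controls $\eps$ from above near the boundary. For the lower bound, fix an ample class $H$ and note that for nef $L$ one has $\eps(L+\epsilon H)\le \eps(L)+\epsilon\,\eps(H)$-type estimates together with the superadditivity coming from concavity, so that $\eps$ on a boundary point is squeezed between $\limsup$ and $\liminf$ of its values at nearby interior points; concavity gives one inequality and the $\sqrt[n]{(\cdot)^n}$ bound gives the other.

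The main obstacle I expect is precisely the behaviour at the boundary of $\Nef(A)$: concavity is cheap, and interior continuity is a soft consequence, but a priori a concave function can jump downward at the boundary. The resolution is that on an abelian variety the lower bound $\eps(L)\ge 1$ for ample $L$ (from homogeneity) together with homogeneity $\eps(tL)=t\,\eps(L)$ pins down the growth rate precisely enough — in fact the cleanest route may be to observe that concavity plus positive homogeneity of degree $1$ forces $\eps$ to be of the form $\inf$ of linear functionals and hence to extend continuously to the closed cone as long as it stays finite, which it does because $0\le \eps(L)\le \sqrt[n]{L^n}<\infty$ for every nef $L$. I would therefore organize the proof as: (1) reduce to the curve-description at a fixed point; (2) write $\eps$ as an infimum of linear functionals and conclude concavity; (3) record $0\le\eps(L)\le\sqrt[n]{L^n}$; (4) deduce continuity on the ample cone from concavity and on the whole nef cone from the sandwich bounds in (3) together with homogeneity.
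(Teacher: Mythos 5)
Your treatment of concavity and of continuity on the interior is correct and coincides with the paper's: concavity follows from homogeneity $\eps(\lambda L)=\lambda\eps(L)$ together with superadditivity $\eps(L+M)\ge\eps(L)+\eps(M)$ (equivalently, from writing $\eps$ as an infimum of the linear functionals $L\mapsto L\cdot C/\mult_0(C)$), and interior continuity is then automatic.

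The gap is in the boundary argument. First, your fallback claim that \emph{concavity plus positive homogeneity plus finiteness} forces continuity up to the boundary of the cone is false: on the cone $\{x\ge0,\,y\ge0\}$ the function $f(x,y)=y$ for $x>0$, $f(0,y)=0$ is concave, degree-one homogeneous and finite, yet discontinuous at $(0,1)$. (Concavity only gives lower semicontinuity at the boundary; the possible failure is exactly a downward jump there, which is the case one must rule out.) Second, your actual mechanism for ruling it out is the bound $\eps(L)\le\sqrt[n]{L^n}$ together with the assertion that $L^n$ ``vanishes on the boundary of the nef cone in the relevant directions.'' For a general projective variety this is simply wrong (nef and big but non-ample classes lie on the boundary with $L^n>0$); on an abelian variety it happens to be true, but that is a nontrivial fact you neither prove nor cite, and proving it requires essentially the same input as the correct argument. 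Also note that your ``lower bound'' paragraph uses an inequality $\eps(L+\epsilon H)\le\eps(L)+\epsilon\,\eps(H)$ which points the wrong way (concavity gives $\ge$); in fact the lower bound at the boundary is trivial since $\eps\ge0$ and the boundary value is $0$. The paper closes the gap cleanly and without invoking ``nef and big implies ample'': by the Nakai criterion for $\R$-divisors, a boundary class $L$ admits a subvariety $V$ with $L^d\cdot V=0$, $d=\dim V$; by homogeneity a translate of $V$ passes through the chosen point, so $0\le\eps(L')\le\sqrt[d]{(L')^d\cdot V/\mult_x V}$ for every nef $L'$, and the right-hand side is continuous in $L'$ and vanishes at $L$. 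This forces $\eps(L)=0$ and $\eps(L_n)\to0$ for any nef sequence $L_n\to L$. Replacing your $\sqrt[n]{L^n}$ sandwich by this $V$-sandwich repairs the proof.
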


   Note that this result (and the subsequent proof) remains valid
   more generally on homogeneous varieties.

\begin{proof}
   The concavity is immediate, as
   both the equality
   $\eps(\lambda L)=\lambda\eps(L)$ for $\lambda\ge0$ and
   the inequality
   $$
      \eps(L+M)\ge\eps(L)+\eps(M)
   $$
   follow immediately from the definition.
   The continuity in the interior of $\Nef(A)$ is then a
   consequence of concavity. Consider then an $\R$-line bundle
   $L$ on the boundary of the nef cone. According to the
   Nakai criterion for $\R$-divisors
   (see \cite[Theorem~2.3.18]{PAG}),
   there is a subvariety $V\subset A$ such that
   $L^d\cdot V=0$, where $d=\dim V$.
   Therefore, as a suitable translate of $V$ passes through any
   given point $x\in A$,
   $$
      0\le\eps(L)\le\sqrt[d]{\q{L^d}Vx}=0 \ ,
   $$
   and hence $\eps(L)=0$. Let now $(L_n)_{n\ge 1}$ be a sequence of
   $\R$-line bundles in $\Nef(A)$ converging to $L$.
   As the intersection product is continuous, we
   obtain
   $$
      0\le\eps(L_n)\le\sqrt[d]{\q{L_n^d}Vx}
      \converges n\infty
      \sqrt[d]{\q {L^d}Vx}=0=\eps(L) \ ,
   $$
   hence $\eps(L_n)\to\eps(L)$, as claimed.
\end{proof}

   Consider now $X=E\times E$, the self-product of an elliptic
   curve $E$ without complex multiplication,
   as in the preceding section.
   We wish to study the behaviour of its Seshadri function
   $\eps:\Nef(X)\to\R$.

   Let $L=\O_X(a_1F_1+a_2F_2+a_3\Delta)$ be an (integral)
   nef line bundle.
   We may assume $a_1\ge a_2\ge a_3$, and even $a_1>0$ if $L$ is
   not the trivial bundle.
   Writing then
   $$
      L=a_1\cdot \Llm,\quad
      \Llm=\O_X(F_1+\lambda F_2-\mu\Delta)
   $$
   with $\lambda=a_2/a_1$ and $\mu=-a_3/a_1$, it is enough to
   determine the Seshadri constants of the bundles $\Llm$.
   These are nef in the range
   $$
      \lambda\in[0,1],\quad
      \mu\in{]\!-\infty,\frac{\lambda}{1+\lambda}]} \ .
   $$
   The following statements are quickly verified:

   \begin{itemize}\compact
   \item[(i)]
      For $\mu\in\interval{]\!-\infty,-1]}$, the curve $\Delta$ computes
      $\eps(\Llm)=1+\lambda$.
   \item[(ii)]
      For $\mu\in\interval{]\!-1,0]}$, the curve $F_1$ computes
      $\eps(\Llm)=\lambda-\mu$.
   \end{itemize}
   As a consequence of Theorem~\ref{SC theorem} we now show:

\begin{proposition}\label{piecewise}
   For fixed rational $\lambda\in[0,1]$, the function
   $$
      {]\!-\infty,\frac{\lambda}{1+\lambda}]}\longto\R,\quad
      \mu\longmapsto\eps(\Llm)
   $$
   is a piecewise affine-linear function
   and has only finitely many
   affine-linear pieces.
\end{proposition}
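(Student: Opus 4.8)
The plan is to produce, for the given rational~$\lambda$, a single \emph{finite} set $\mathcal S$ of elliptic curves on $X$ that computes $\eps(\Llm)$ for \emph{every} admissible~$\mu$ at once. Since the intersection number of $\Llm$ with a fixed elliptic curve is an affine-linear function of $\mu$, the map $\mu\mapsto\eps(\Llm)$ will then be a pointwise minimum of finitely many affine-linear functions, hence piecewise affine-linear with only finitely many pieces.

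First I would record the relevant intersection numbers. Writing the coefficients of $\Llm$ as $a_1=1$, $a_2=\lambda$, $a_3=-\mu$, one has $\Llm\cdot F_1=\lambda-\mu$, $\Llm\cdot F_2=1-\mu$, $\Llm\cdot\Delta=1+\lambda$, and $\Llm\cdot\Ncd=\lambda c^2+d^2-\mu(c+d)^2$ for coprime integers $c,d$; each of these is affine-linear in $\mu$, with slope $-1$, $-1$, $0$, $-(c+d)^2$ respectively. By Theorem~\ref{computed by elliptic}, applied to $\R$-divisors as explained at the start of this section, $\eps(\Llm)$ equals the minimum of $\Llm\cdot N$ taken over all elliptic curves $N$ on $X$.

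The heart of the argument -- and the point I expect to require the most care -- is the choice of $\mathcal S$. Set $\mu_0=\frac{\lambda}{1+\lambda}$, the right endpoint of the admissible range. The key observation is that the necessary condition for $\Ncd$ to be submaximal for $\Llm$ that was isolated in the proof of Theorem~\ref{SC theorem}, inequality~(\ref{cd inequality}), specializes here to $(1+\lambda)^2>2(d-\lambda c)^2(c+d)^2$, which does \emph{not} involve~$\mu$. Just as in that proof this forces $c$ and $d$ to have the same sign, so that -- using $\Ncd=N_{-c,-d}$ -- one may assume $c,d>0$, and then either $d/c=\lambda$, which determines $(c,d)$ uniquely since $\lambda$ is rational (namely $(c,d)=(q,p)$ if $\lambda=p/q$ in lowest terms), or $c+d<\frac1{\sqrt2}(1+\lambda)$, of which there are only finitely many. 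I let $\mathcal S$ be the union of these finitely many curves $\Ncd$ with $\{F_1,F_2,\Delta\}$. Note that for $0<\mu<\mu_0$ the coefficients satisfy $a_1\ge a_2\ge a_3$ and $\Llm$ is ample, so the relevant steps of the proof of Theorem~\ref{SC theorem} do apply.

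It then remains to verify that $\eps(\Llm)=\min_{N\in\mathcal S}\Llm\cdot N$ for all $\mu\le\mu_0$. The inequality ``$\le$'' is automatic, since every $N\in\mathcal S$ is an elliptic curve. For ``$\ge$'' I would show that $\eps(\Llm)$ is attained on $\mathcal S$, in three ranges. For $\mu\le0$ all coefficients of $\Llm$ are nonnegative, so the argument of Example~\ref{warm-up example} shows that $\eps(\Llm)$ is computed by one of $F_1,F_2,\Delta$. For $0<\mu<\mu_0$ the bundle $\Llm$ is ample, and the Hermite estimate~(\ref{eq Hermite submax}) from the proof of Theorem~\ref{computed by elliptic} gives $\eps(\Llm)\le\sqrt{\frac23\Llm^2}<\sqrt{\Llm^2}$; hence the elliptic curve computing $\eps(\Llm)$ is submaximal, and by Proposition~\ref{all elliptic curves} together with inequality~(\ref{cd inequality}) it must lie in $\mathcal S$. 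The endpoint $\mu=\mu_0$ then follows by letting $\mu\to\mu_0$ from below and using the continuity of the Seshadri function proved above along with the continuity of $\mu\mapsto\min_{N\in\mathcal S}\Llm\cdot N$. Once this identity is in hand the proposition is immediate, a pointwise minimum of finitely many affine-linear functions on an interval being piecewise affine-linear with finitely many pieces.
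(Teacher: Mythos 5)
Your proof is correct and follows essentially the same route as the paper's: fix $\lambda$, observe that the necessary condition \eqnref{cd inequality} for $\Ncd$ to be submaximal specializes to $(1+\lambda)^2>2(d-\lambda c)^2(c+d)^2$ and is independent of $\mu$, so that $\eps(\Llm)$ is the pointwise minimum of finitely many affine-linear functions of $\mu$. You are somewhat more explicit than the paper about the ranges $\mu\le 0$ and the endpoint $\mu=\mu_\infty$, but the core argument is identical.
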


\begin{proof}
   We may assume $\lambda>0$, so that $\Llm$ is ample.
   According to (the proof of) Theorem~\ref{SC theorem}, only
   finitely many of the
   elliptic curves $\Ncd$ can be submaximal for any
   of the line bundles $\Llm$, when $\lambda$ is fixed and $\mu$
   varies in the ample
   range $-\infty<\mu<\lambda/(1+\lambda)$.
   In fact, Condition
   \eqnref{cd inequality},
   which is necessary for submaximality,
   is equivalent to
   $$
      (1+\lambda)^2>2(d-\lambda c)^2(c+d)^2 \ ,
   $$
   and hence it is independent of $\mu$.
   Denoting the potential submaximal curves by $\liste N1k$,
   the Seshadri function in the statement of the proposition
   is then the pointwise minimum of
   finitely many affine-linear functions:
   \begin{equation}\label{minimum}
      \eps(\Llm)=\min_{i=1}^k \Llm\cdot N_i \ .
   \end{equation}
   At the upper boundary $\mu_\infty=\lambda/(1+\lambda)$ of the
   ample range, $\Llm$ is numerically equivalent to a multiple of
   an elliptic curve $\Ncd$, and hence
   $\eps(L_{\lambda,\mu_\infty})=0$.
\end{proof}

   The behaviour of the Seshadri function described by
   Proposition~\ref{piecewise} is displayed in
   Fig.~\ref{seshadri function}.
   We now illustrate the situation by considering concrete
   examples.

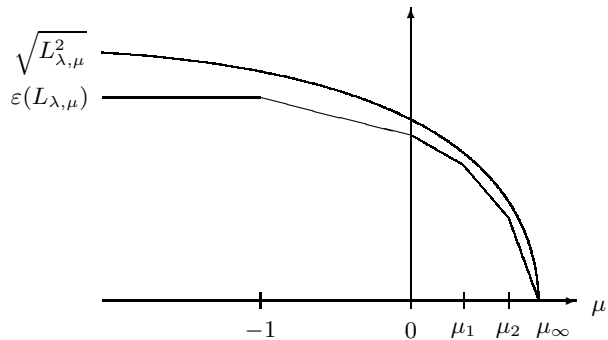
\begin{figure}
   \centering\footnotesize

\unitlength 1mm % = 2.845pt
\linethickness{0.4pt}
\ifx\plotpoint\undefined\newsavebox{\plotpoint}\fi % GNUPLOT compatibility
\begin{picture}(83,55)(0,0)
\put(38,7){\line(0,-1){2}}
\put(38,2){\makebox(0,0)[cc]{$-1$}}
\put(58,6){\vector(0,1){39}}
\put(58,2){\makebox(0,0)[cc]{$0$}}
\put(38,33){\line(4,-1){20}}
\multiput(58,28)(.058823529,-.033613445){119}{\line(1,0){.058823529}}
\multiput(65,24)(.033707865,-.039325843){178}{\line(0,-1){.039325843}}
\multiput(71,17)(.033613445,-.092436975){119}{\line(0,-1){.092436975}}
\put(71,7){\line(0,-1){2}}
\put(65,7){\line(0,-1){2}}
\put(58,7){\line(0,-1){2}}
\put(65,2){\makebox(0,0)[cc]{$\mu_1$}}
\put(71,2){\makebox(0,0)[cc]{$\mu_2$}}
\put(77,2){\makebox(0,0)[cc]{$\mu_\infty$}}
\qbezier(75,6)(74,35.5)(17,39)
\put(38,33){\line(-1,0){21}}
\put(17,6){\vector(1,0){63}}
\put(83,5){\makebox(0,0)[cc]{$\mu$}}
\put(10,33){\makebox(0,0)[cc]{$\varepsilon(L_{\lambda,\mu})$}}
\put(10,39){\makebox(0,0)[cc]{$\sqrt{L_{\lambda, \mu}^2}$}}
\end{picture}

   \caption{\label{seshadri function}
      Piecewise linear behaviour of the Seshadri function on
      a cross-section of the ample cone of the surface
      $X=E\times E$. (The point $\mu_\infty$ is the upper boundary
      $\frac\lambda{1+\lambda}$
      of the nef
      range.)
      }
\end{figure}

\begin{example}\rm
   We consider the line bundles
   $L_{\frac 1n,\mu}$ for a fixed integer $n\ge 1$. The nef range
   for $\mu$ is then $-\infty<\mu\le\frac{1}{n+1}$.
   One shows now that the only curves that matter in the minimum
   in \eqnref{minimum} are $\Delta,F_1$, and $N_{n,1}$.
   (If $n=2$, then
   the curve
   $N_{1,1}$ is also submaximal, but this curve
   turns out to be irrelevant when taking the
   minimum.)
   One can then determine the Seshadri function:
   $$
      \renewcommand\arraystretch{1.2}
      \eps(L_{\frac1n,\mu})=
      \begin{cases}
         1+\frac1n & \mbox{if } \mu\le-1 & \mbox{($\Delta$ computes $\eps$)} \\
         \frac1n-\mu & \mbox{if } \mu\in[-1,\frac{n^2+n-1}{n^2(n+2)}] & \mbox{($F_1$ computes $\eps$)} \\
         1+n-(n+1)^2\mu & \mbox{if } \mu\in[\frac{n^2+n-1}{n^2(n+2)},\frac1{n+1}] & \mbox{($N_{1,1}$ computes $\eps$)} \\
      \end{cases}
   $$
\end{example}

   For other values of $\lambda$, the number of elliptic curves $\Ncd$
   that have to be taken into account can become larger.
   We conclude with a somewhat more intricate example, which is
   intended
   to illustrate this point.

\begin{example}\rm
   We consider the line bundles
   $L_{\frac8{11},\mu}$.
   Among the curves $\Ncd$ the
   potential submaximal curves are
   $N_{1,1},N_{2,1},N_{3,2},N_{4,3},N_{7,5},N_{11,8}$. By carrying out the
   necessary computations one gets
   $$
      \renewcommand\arraystretch{1.2}
      \eps(L_{\frac8{11},\mu})=
      \begin{cases}
         \frac{19}{11} & \mbox{if } \mu\le -1 & \mbox{($\Delta$ computes $\eps$)} \\
         \frac{8}{11}-\mu & \mbox{if } \mu\in[-1,\frac13] & \mbox{($F_1$ computes $\eps$)} \\
         \frac{19}{11}-4\mu & \mbox{if } \mu\in [\frac13 ,\frac{97}{231}] & \mbox{($N_{1,1}$ computes $\eps$)} \\
         \frac{116}{11}-25\mu & \mbox{if } \mu\in [\frac{97}{231} ,\frac{37}{88}] & \mbox{($N_{3,2}$ computes $\eps$)} \\
         \frac{227}{11}-49\mu & \mbox{if } \mu\in [\frac{37}{88} ,\frac{1445}{3432}] & \mbox{($N_{4,3}$ computes $\eps$)} \\
         {152}-361\mu & \mbox{if } \mu\in[\frac{1445}{3432},\frac{8}{19} ] & \mbox{($N_{11,8}$ computes $\eps$)} \ . \\
      \end{cases}
   $$
   ($N_{2,1}$ and $N_{7,5}$ turn out to be irrelevant when taking
   the minimum.)
\end{example}

\section{The case $E\times E$ with complex multiplication}\label{sect cx mult}

   In this section we consider abelian surfaces $E\times E$
   where $E$ has complex multiplication. We will focus on the
   elliptic curves admitting an automorphism $\ne\pm 1$:
   $$
      E_1=\C / \Z+i\Z \midtext{and} E_2=\C / \Z+e^{\pi i/3}\Z \ .
   $$
   We will study first $E_1\times E_1$.

\subsection{Complex multiplication by $i$}

   The N\'eron-Severi group of $E_1\times E_1$ is of rank four, with
   generators
   $$
      F_1, F_2, \Delta, \Sigma ,
   $$
   where $F_1,F_2$ are the fibers of the projections, $\Delta$
   is
   the diagonal, and $\Sigma$ is the graph of the automorphism
   $$
      \iota:E_1\longto E_1, [x]\longmapsto [ix] \ .
   $$
   (see \cite[Sect.~2.7]{BLComplexTori}).

   Note that $\iota$ has exactly two fixed-points: $[0]$ and
   $[\frac{1+i}2]$. Therefore we have $\Delta\cdot\Sigma=2$.
   As for the remaining intersection numbers, we get
   $$
      F_1^2=F_2^2=\Delta^2=\Sigma^2=0
   $$
   and
   $$
      F_1\cdot F_2=F_1\cdot\Delta=F_2\cdot\Delta=F_1\cdot\Sigma=F_2\cdot\Sigma=1
      \ .
   $$

   A line bundle $L=O_X(a_1F_1+a_2F_2+a_3\Delta+a_4\Sigma)$ is
   ample if and only if its self-intersection as well as its
   intersection with the curves $F_1,F_2,\Delta,\Sigma$ are
   positive. (This follows in the same way as indicated after
   \eqnref{ampleness conditions} in the rank three case.)
   So $L$ is ample if and only if
   \be
        a_2+a_3+a_4&>&0 \\
        a_1+a_3+a_4&>&0 \\
        a_1+a_2+2a_4&>&0 \\
        a_2+a_2+2a_3&>&0 \\
        a_1a_2+a_1a_3+a_1a_4+a_2a_3+a_2a_4+2a_3a_4 &>& 0 \ .
   \ee

   The first step in this section is to prove an analogue of
   Theorem~\ref{computed by elliptic} to the effect that all
   Seshadri constants are computed by elliptic curves.
   To this end we will need to know all elliptic curves on
   $E_1\times E_1$.
   As a parametrization of all elliptic curves as in
   Sect.~\ref{sect without cx mult} seems difficult, we will
   make use of the following result instead:

\begin{lemma}[Hayashida-Nishi \cite{HN65}]\label{lemma HN}
   Let $E$ be an elliptic curve. Then
   for every elliptic curve $N$ on
   $E\times E$ there are endomorphisms $\sigma_1,\sigma_2$ of $E$
   such that $N$ is a translate of the image of the map
   $$
      E\longto E\times E, \ x\longmapsto(\sigma_1(x),\sigma_2(x)) \ .
   $$
\end{lemma}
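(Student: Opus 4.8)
The plan is to reduce the statement to a fact about the ring of endomorphisms $\End(E)$ by transporting the geometric problem into $\mathrm{Hom}(E,E\times E)$. First I would recall the general principle that for abelian varieties any morphism is, up to a translation, a homomorphism: if $N\subset E\times E$ is an elliptic curve, choose a point $p\in N$, translate so that $0\in N$, and then the normalization $\tilde N\to E\times E$ of (the translated) $N$ is a morphism of abelian varieties sending $0$ to $0$, hence a homomorphism; moreover $\tilde N$ is an elliptic curve isogenous to its image. So after a translation we may assume $N$ is the image of a homomorphism $\phi:N_0\to E\times E$ from an elliptic curve $N_0$. Composing with the two projections $\mathrm{pr}_j:E\times E\to E$ gives homomorphisms $\psi_j=\mathrm{pr}_j\circ\phi:N_0\to E$. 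Since $N$ is one-dimensional and not contained in a fiber $\{*\}\times E$ or $E\times\{*\}$ in general, at least one $\psi_j$ is an isogeny; I would treat the (easy) degenerate cases $N$ a translate of $F_1$ or $F_2$ separately, where one $\psi_j$ is zero and the statement is immediate with $\sigma_j=0$.

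Next, assuming say $\psi_1:N_0\to E$ is an isogeny, I would pull back along $\psi_1$: the composition $\phi\circ(\text{a lift})$ is not literally available because $\psi_1$ need not be invertible, so instead I would use the standard device of the "complementary isogeny". There is an isogeny $\widehat{\psi_1}:E\to N_0$ (up to the degree) with $\psi_1\circ\widehat{\psi_1}=[\deg\psi_1]_E$ and $\widehat{\psi_1}\circ\psi_1=[\deg\psi_1]_{N_0}$. Then $E\to E\times E$, $x\mapsto\phi(\widehat{\psi_1}(x))=(\,[\deg\psi_1]_E(x),\ \psi_2(\widehat{\psi_1}(x))\,)$ has image equal to $N$ scaled by the isogeny $[\deg\psi_1]$, i.e. the same curve $N$ (as the image of a homomorphism, the image is unchanged under precomposition with an isogeny). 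This realizes a translate of $N$ as the image of $x\mapsto(\sigma_1(x),\sigma_2(x))$ with $\sigma_1=[\deg\psi_1]_E\in\End(E)$ and $\sigma_2=\psi_2\circ\widehat{\psi_1}\in\End(E)$, which is exactly the asserted shape.

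The main obstacle, and the place where care is needed, is the identification of $N_0$ with $E$: a priori the source elliptic curve $N_0$ is only isogenous to $E$, not equal to it, so $\psi_2$ lives in $\mathrm{Hom}(N_0,E)$ rather than $\End(E)$. The complementary-isogeny trick above is precisely what absorbs this discrepancy — precomposing everything with $\widehat{\psi_1}:E\to N_0$ converts $\mathrm{Hom}(N_0,E)$-data into $\End(E)$-data while only replacing $N$ by a translate of itself (the image of a group homomorphism is a subgroup, insensitive to precomposition by a surjective map). A secondary point to get right is that the image of $x\mapsto(\sigma_1(x),\sigma_2(x))$ really is a curve (not a point), which holds as soon as $(\sigma_1,\sigma_2)\ne(0,0)$; this is guaranteed because $\sigma_1=[\deg\psi_1]\ne 0$. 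Finally I would remark that the lemma as used in the sequel only needs the existence of such $\sigma_1,\sigma_2$, so there is no need to make them canonical or to minimize degrees.
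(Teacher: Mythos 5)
The paper does not actually prove this lemma: it is quoted from Hayashida--Nishi \cite{HN65} and used as a black box, so there is no in-paper argument to compare against. Your proposal is a correct, self-contained proof, and it is the standard one. The two ingredients are both classical: (a) a morphism of abelian varieties sending $0$ to $0$ is a homomorphism, so after translating $N$ through the origin the inclusion $N\hookrightarrow E\times E$ is a homomorphism with components $\psi_1,\psi_2\in\mathrm{Hom}(N,E)$; and (b) a nonzero $\psi_1:N\to E$ is an isogeny and admits a complementary isogeny $\widehat{\psi_1}:E\to N$ with $\psi_1\circ\widehat{\psi_1}=[\deg\psi_1]_E$. Precomposing with the surjection $\widehat{\psi_1}$ does exactly what you claim: it leaves the image unchanged while converting the $\mathrm{Hom}(N,E)$-data into $\End(E)$-data, yielding $\sigma_1=[\deg\psi_1]_E$ and $\sigma_2=\psi_2\circ\widehat{\psi_1}$. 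The degenerate cases $\psi_1=0$ or $\psi_2=0$ (translates of the fibers) are correctly disposed of, and your observation that the image is automatically a curve is fine since it equals $N$ by construction. One cosmetic remark: since $N$ is by hypothesis an elliptic curve it is already smooth, so the normalization step is vacuous; it would only be needed to extend the statement to arbitrary irreducible curves of geometric genus one.
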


   Let now $N$ be an elliptic curve on $X=E_1\times E_1$. As
   $\End(E_1)=\Z+\iota\Z$,
   Lemma \ref{lemma HN} says that
   there are integers $a,b,c,d$ such that
   $N$ is a translate of the curve
   $$
      N_{a,b,c,d}\eqdef\set{(ax+b\iota(x),cx+d\iota(x))\with x\in
      E_1} \ .
   $$
   We may assume here that $a,b,c,d$ are coprime, because a common
   factor would just mean that the map $(\sigma_1,\sigma_2)$ is
   composed with a multiplication map.

   We determine next the intersection numbers of
   $\Nabcd$ with the
   generators of the N\'eron-Severi group.
   As $F_1$ and $\Nabcd$ intersect transversely, we have
   \begin{equation}\label{intersection N F1}
      \Nabcd\cdot F_1=\#(\Nabcd\cap F_1)=
      \frac{\#\set{x\in F_1\with ax+b\iota x=0}}{\deg\sigma} \ .
   \end{equation}
   where $\sigma:E_1\to\Nabcd$ is the map
   $x\mapsto(ax+b\iota x,cx+d\iota x)$.

   In the next two lemmas we will evaluate the expression on the
   RHS of
   \eqnref{intersection N F1}.

\begin{lemma}\label{number of solutions}
   For integers $a$ and $b$, not both of them zero, the equation
   \begin{equation}\label{eqn on E1}
      ax+b\iota x=0
   \end{equation}
   has exactly $a^2+b^2$ solutions $x\in E_1$.
\end{lemma}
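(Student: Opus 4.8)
The plan is to identify the map $x\mapsto ax+b\iota x$ with multiplication by the Gaussian integer $\alpha=a+bi$ on $E_1=\C/\Z[i]$, and then to count its kernel by a lattice-index computation.

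First I would write a point of $E_1$ as $x=[z]$ with $z\in\C$; since $\iota[z]=[iz]$, one has $ax+b\iota x=[\alpha z]$, so \eqnref{eqn on E1} is equivalent to $\alpha z\in\Z[i]$, i.e.\ to $z\in\frac1\alpha\Z[i]$ (note $\alpha\ne0$, as $a,b$ are not both zero). As $\alpha\in\Z[i]$ we have $\Z[i]\subseteq\frac1\alpha\Z[i]$, so the solution set is the finite subgroup $\frac1\alpha\Z[i]\big/\Z[i]$ of $E_1$, of order $[\frac1\alpha\Z[i]:\Z[i]]$.

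It then remains to evaluate this index. Multiplication by $\alpha$ is an $\R$-linear automorphism of $\C$ carrying $\frac1\alpha\Z[i]$ onto $\Z[i]$ and $\Z[i]$ onto $\alpha\Z[i]$, so
$$
   \#\Big(\frac1\alpha\Z[i]\big/\Z[i]\Big)
   = \big[\Z[i]:\alpha\Z[i]\big]
   = \abs{\det\matr{a & -b \\ b & a}} = a^2+b^2 \ ,
$$
the middle equality because $\alpha\Z[i]$ is spanned over $\Z$ by $\alpha=a+bi$ and $i\alpha=-b+ai$, whose coordinate vectors in the $\Z$-basis $(1,i)$ of $\Z[i]$ are the columns of that matrix. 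This gives precisely the asserted count. I do not expect a genuine obstacle here: the only point worth isolating is the identification with multiplication by $a+bi$, after which the statement is the standard fact that a nonzero element $\alpha$ of an imaginary quadratic order acts with degree $N(\alpha)$, here $N(a+bi)=a^2+b^2$.
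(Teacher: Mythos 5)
Your proof is correct, but it takes a genuinely different route from the paper's. You identify the homomorphism $x\mapsto ax+b\iota x$ with multiplication by the Gaussian integer $\alpha=a+bi$ on $E_1=\C/\Z[i]$, observe that the solution set is the subgroup $\frac1\alpha\Z[i]\big/\Z[i]$, and compute its order as the lattice index $\bigl[\Z[i]:\alpha\Z[i]\bigr]=N(\alpha)=a^2+b^2$ via the determinant of multiplication by $\alpha$ in the basis $(1,i)$; every step is standard, and notably your argument needs no case distinction on $\gcd(a,b)$. The paper instead works directly with division points: setting $\ell=a^2+b^2$ and first assuming $a,b$ coprime, it shows that any solution is an $\ell$-division point $\bigl[\frac m\ell+i\frac n\ell\bigr]$ and counts the pairs $(m,n)$ satisfying the two resulting congruences modulo $\ell$ (exactly one $n$ for each $m$, since $a$ and $b$ are invertible mod $\ell$), then handles general $a,b$ by factoring out $d=\gcd(a,b)$ and using that multiplication by $d$ has degree $d^2$. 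Your lattice/norm argument is shorter and more conceptual; the paper's congruence bookkeeping is more elementary and has the side benefit of introducing exactly the technique that is reused immediately afterwards in Lemma~\ref{degree of sigma} and in the appendix (Lemmas \ref{solutions of congruences} and \ref{reduction lemma}), where the kernel of $(\sigma_1,\sigma_2)$ genuinely requires an analysis of simultaneous congruences rather than a single norm computation.
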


\begin{proof}
   We may assume that both $a$ and $b$ are non-zero, the
   assertion being clear otherwise.
   Let $\ell=a^2+b^2$, and
   consider first the case that $a$ and $b$ are coprime.
   Suppose that $x$ is a solution of \eqnref{eqn on E1}.
   By subtracting  the two equations that are obtained from
   \eqnref{eqn on E1} by multiplication with $a$ and $b$
   respectively, we see that $x$ is necessarily an
   $\ell$-division point on $E_1$. Now, an $\ell$-division point
   $$
      x=\left[\frac m\ell+i\frac n\ell\right], \qquad 0\le m,n < \ell,
   $$
   solves \eqnref{eqn on E1} if any only if $\ell$ is a divisor
   of both $am-bn$ and $an+bm$. Given an integer
   $m\in\{0,\dots,\ell-1\}$, there is a unique integer
   $n\in\{0,\dots,\ell-1\}$ such that these two divisibility
   conditions are satisfied (since $a$ and $b$ are invertible
   modulo $\ell$). So there are $\ell$ distinct
   solutions $x\in E_1$.

   Taking now general $a$ and $b$, let $d=\gcd(a,b)$ and write
   $a=da'$,
   and $b=db'$. By what we have shown so far, the equation
   $$
      a'(dx)+b'\iota(dx)=0
   $$
   admits exactly $a'^2+b'^2$ solutions for $dx$. As
   multiplication by $d$ is a map of degree $d^2$, we obtain
   $d^2(a'^2+b'^2)=a^2+b^2$ solutions for $x$, and this completes
   the proof.
\end{proof}

   We now determine the degree of the map
   $\sigma=(\sigma_1,\sigma_2):E_1\to\Nabcd$.
   For this, and in fact
   for the remainder of this section we will use the abbreviation
   \begin{equation}\label{gcd}
      D \eqdef \gcd(a^2+b^2,c^2+d^2,ac+bd,ad-bc) \ .
   \end{equation}

\begin{lemma}\label{degree of sigma}
   The map $\sigma$ is
   of degree $D$.
\end{lemma}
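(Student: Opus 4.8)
The plan is to recognize $\sigma$ as an isogeny of elliptic curves, so that its degree is the order of its kernel, and then to compute that order through arithmetic in the Gaussian integers. First I would note that $\sigma\colon E_1\to E_1\times E_1$ is a homomorphism of abelian varieties whose image is the elliptic curve $\Nabcd$; since $a,b,c,d$ are not all zero it is non-constant, hence a surjective isogeny $E_1\to\Nabcd$, and therefore $\deg\sigma=\#\ker\sigma$. The kernel is
$$
   \ker\sigma=\set{x\in E_1\with ax+b\iota x=0\ \text{and}\ cx+d\iota x=0}\ .
$$
Identifying $\End(E_1)=\Z+\iota\Z$ with $\Z[i]$ via $\iota\mapsto i$ (so $\iota^2=-1$, and the endomorphism $x\mapsto ax+b\iota x$ is multiplication by $\alpha=a+b\iota$), and likewise setting $\beta=c+d\iota$, this reads $\ker\sigma=\ker\alpha\cap\ker\beta$.

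Next I would collapse the two endomorphisms into one. Let $\gamma$ be a greatest common divisor of $\alpha$ and $\beta$ in the principal ideal domain $\Z[i]$. From $\gamma\mid\alpha$ and $\gamma\mid\beta$ one gets $\ker\gamma\subseteq\ker\alpha\cap\ker\beta$, and writing $\gamma=u\alpha+v\beta$ with $u,v\in\Z[i]$ (Bézout) gives the reverse inclusion, so $\ker\sigma=\ker\gamma$. Writing $\gamma=r+s\iota$, Lemma~\ref{number of solutions} (applied to the pair $(r,s)$) gives $\#\ker\gamma=r^2+s^2=N(\gamma)$, equivalently multiplication by $\gamma$ on $\C/\Z[i]$ has kernel $\Z[i]/\gamma\Z[i]$ of order $N(\gamma)$.

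It then remains to identify $N(\gamma)$ with $D$. Here $N(\alpha)=a^2+b^2$, $N(\beta)=c^2+d^2$ and $\alpha\bar\beta=(ac+bd)+(bc-ad)i$, so $D=\gcd\big(N(\alpha),N(\beta),\mathrm{Re}(\alpha\bar\beta),\mathrm{Im}(\alpha\bar\beta)\big)$. For $N(\gamma)\mid D$: from $\gamma\mid\alpha$ and $\gamma\mid\beta$ we get $N(\gamma)\mid N(\alpha)$ and $N(\gamma)\mid N(\beta)$, and from $\gamma\mid\alpha$, $\bar\gamma\mid\bar\beta$ we get $N(\gamma)=\gamma\bar\gamma\mid\alpha\bar\beta$ in $\Z[i]$, so the rational integer $N(\gamma)$ divides both $ac+bd$ and $ad-bc$. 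For $D\mid N(\gamma)$: since $D$ divides the real and imaginary parts of $\alpha\bar\beta$ it divides $\alpha\bar\beta$, hence $u\bar v\,\alpha\bar\beta$, hence $2\,\mathrm{Re}(u\bar v\,\alpha\bar\beta)$ in $\Z[i]$; expanding
$$
   N(\gamma)=(u\alpha+v\beta)\overline{(u\alpha+v\beta)}=\abs{u}^2N(\alpha)+\abs{v}^2N(\beta)+2\,\mathrm{Re}(u\bar v\,\alpha\bar\beta)
$$
and using $D\mid N(\alpha)$ and $D\mid N(\beta)$ gives $D\mid N(\gamma)$. As both are positive, $N(\gamma)=D$, hence $\deg\sigma=\#\ker\sigma=N(\gamma)=D$.

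The one step needing real care is this last identity $N(\gcd(\alpha,\beta))=D$, specifically $D\mid N(\gamma)$: it is \emph{not} the (false) statement $N(\gcd(\alpha,\beta))=\gcd(N(\alpha),N(\beta))$ and genuinely requires the two mixed terms $ac+bd$ and $ad-bc$, which is why the Bézout expansion of $N(\gamma)$ above — or, alternatively, a prime-by-prime comparison of valuations in $\Z[i]$ distinguishing split, inert and ramified rational primes — is what does the work. Everything else is bookkeeping once $\End(E_1)$ has been identified with $\Z[i]$.
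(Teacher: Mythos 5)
Your proof is correct, but it takes a genuinely different route from the paper's. The paper argues directly with division points: it shows that any $x\in\ker\sigma$ is a $D$-division point (by forming the combinations $a(ax+b\iota x)+b\iota(\dots)$ etc.), reduces the problem to counting solutions $(m,n)$ modulo $D$ of the four congruences $am-bn\equiv bm+an\equiv cm-dn\equiv dm+cn\equiv 0$, and then invokes the elementary but somewhat laborious Lemma~\ref{solutions of congruences} from the appendix, which produces exactly $D$ solutions by hand. You instead exploit the ring structure of $\End(E_1)\cong\Z[i]$ as a Euclidean domain: $\ker\sigma=\ker\alpha\cap\ker\beta=\ker\gamma$ for $\gamma=\gcd(\alpha,\beta)$ via B\'ezout, and then $\#\ker\gamma=N(\gamma)$, with the only substantive point being the identity $N(\gamma)=D$ — which you correctly isolate and prove by the two divisibilities $N(\gamma)\mid D$ (from $\gamma\bar\gamma\mid\alpha\bar\beta$) and $D\mid N(\gamma)$ (from the B\'ezout expansion of $\gamma\bar\gamma$). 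Your computation $\alpha\bar\beta=(ac+bd)+(bc-ad)i$ matches the four generators of $D$, and your warning that $N(\gcd(\alpha,\beta))\neq\gcd(N\alpha,N\beta)$ in general is exactly the right thing to flag. What each approach buys: the paper's congruence computation is self-contained and its appendix lemma is reused (with a prescribed value of $x$) in the proof of Lemma~\ref{reduction lemma}, which is needed later for Theorem~\ref{computed by elliptic 2}; your Gaussian-integer argument is shorter and more conceptual, identifies the kernel structurally as $\Z[i]/\gamma\Z[i]$, in fact does not need the coprimality of $a,b,c,d$, and moreover makes Lemma~\ref{reduction lemma} essentially automatic (take $\bar a+\bar b i=\alpha/\gamma$ and $\bar c+\bar d i=\beta/\gamma$), as well as transferring verbatim to the Eisenstein case $E_2$ with $\Z[e^{\pi i/3}]$.
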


\begin{proof}
   We need to determine the number of elements in the kernel of
   $\sigma$. So suppose that $x$ is a point in $E_1$ with
   \begin{equation}\label{kernel sigma}
      ax+b\iota x=cx+d\iota x=0 \ .
   \end{equation}
   As in the proof of Lemma \ref{number of solutions} it follows
   that $x$ is both an $(a^2+b^2)$-division point
   and a $(c^2+d^2)$-division point.
   We see from the equation
   $d\iota(ax+b\iota x)=(-ac-bd)x$ that $x$ is also an
   $(ac+bd)$-division point, and in the same manner that it is
   also a $(ad-bc)$-division point. So we infer that $x$ is a
   $D$-divison point. Conversely, a $D$-division point
   $
      x=\left[\frac mD+i\frac nD\right]
   $
   satisfies the equations \eqnref{kernel sigma} if and only if
   the following congruences are fulfilled:
   \be
      && am-bn\equiv 0 \mod D \\
      && bm+an\equiv 0 \mod D \\
      && cm-dn\equiv 0 \mod D \\
      && dm+cn\equiv 0 \mod D \ .
   \ee
   The proof is now completed by invoking
   Lemma \ref{solutions of congruences} (in the appendix), which
   states
   that this systems admits exactly $D$ solutions.
\end{proof}

   The preceding lemmas now allow us to determine the required
   intersection numbers:

\begin{proposition}\label{Nabcd intersection numbers}
   We have
   $$
   \begin{array}{ll}
      \displaystyle\Nabcd\cdot F_1=\frac{a^2+b^2}D &
      \displaystyle\Nabcd\cdot F_2=\frac{c^2+d^2}D
      \\[\bigskipamount]
      \displaystyle\Nabcd\cdot \Delta=\frac{(a-c)^2+(b-d)^2}D  \qquad &
      \displaystyle\Nabcd\cdot \Gamma=\frac{(a-d)^2+(b+c)^2}D
   \end{array}
   $$
\end{proposition}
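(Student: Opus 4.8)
The plan is to compute each of the four intersection numbers by counting points on $E_1$, just as was done for $F_1$ as in~\eqnref{intersection N F1}. Normalize the curves so that $F_1=\set{0}\times E_1$, $F_2=E_1\times\set{0}$, $\Delta$ is the diagonal, $\Sigma$ is the graph of $\iota$, and $\Nabcd$ is the actual image of the homomorphism $\sigma\colon x\mapsto(ax+b\iota x,\,cx+d\iota x)$, so that $\Nabcd$ passes through $0$; since intersection numbers depend only on numerical classes, these choices cause no loss. By Lemma~\ref{degree of sigma} the isogeny $\sigma\colon E_1\to\Nabcd$ has degree $D$, so each point of $\Nabcd$ has exactly $D$ preimages. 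For a generator $G$ the set $\set{x\in E_1\with\sigma(x)\in G}$ is a union of fibres of $\sigma$ (equal values of $\sigma$ have equal coordinates, hence lie in $G$ simultaneously), so it maps $D$-to-$1$ onto $\Nabcd\cap G$; provided $\Nabcd$ is not a translate of $G$, the two elliptic curves meet transversely, and therefore $\Nabcd\cdot G=\#(\Nabcd\cap G)=\frac1D\,\#\set{x\in E_1\with\sigma(x)\in G}$.

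It remains to count solutions, which is where Lemma~\ref{number of solutions} enters. One has $\sigma(x)\in F_1\iff ax+b\iota x=0$ (with $a^2+b^2$ solutions), $\sigma(x)\in F_2\iff cx+d\iota x=0$ (with $c^2+d^2$ solutions), and $\sigma(x)\in\Delta\iff ax+b\iota x=cx+d\iota x$, i.e.\ $(a-c)x+(b-d)\iota x=0$ (with $(a-c)^2+(b-d)^2$ solutions). For $\Sigma$ we use that $\iota$ is multiplication by $i$, so $\iota^2=-\mathrm{id}$; hence $\iota(ax+b\iota x)=a\iota x+b\iota^2x=-bx+a\iota x$, and $\sigma(x)\in\Sigma\iff\iota(ax+b\iota x)=cx+d\iota x\iff(b+c)x+(d-a)\iota x=0$, which by Lemma~\ref{number of solutions} has $(b+c)^2+(d-a)^2=(a-d)^2+(b+c)^2$ solutions. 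Dividing the four counts by $D$ gives the stated formulas.

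I expect the only step needing real justification to be the transversality assertion; for $F_1$ it was simply asserted in the lines before \eqnref{intersection N F1}, and the same argument applies to $F_2$, $\Delta$, $\Sigma$: along a translate of an elliptic curve in a complex torus the tangent line is constant, equal to the translate of the corresponding line in the Lie algebra, so two such curves that are not translates of one another have distinct tangent lines at every common point and thus meet transversely. Finally the degenerate cases, in which $\Nabcd$ is a translate of the relevant generator $G$, are handled directly: there the numerator on the right vanishes (for instance $\Nabcd$ is a translate of $\Delta$ precisely when $a=c$, $b=d$, where $(a-c)^2+(b-d)^2=0$) and the left side equals $G^2=0$, so the identity holds trivially.
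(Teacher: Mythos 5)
Your proof is correct and takes essentially the same route as the paper, whose entire argument is the one sentence that the first formula follows from Lemmas \ref{number of solutions} and \ref{degree of sigma} via \eqnref{intersection N F1} and that ``the proof of the remaining assertions is analogous.'' You have simply written out those analogous cases explicitly -- including the correct reduction of the $\Sigma$-condition to $(b+c)x+(d-a)\iota x=0$ -- together with the transversality and degenerate cases that the paper leaves implicit.
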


\begin{proof}
   In view of lemmas \ref{number of solutions}
   and \ref{degree of sigma}
   the first assertion follows using
   \eqnref{intersection N F1}.
   The proof of the remaining assertions is analogous.
\end{proof}

   Fix now
   an ample
   line bundle
   $L=\O_X(a_1F_1+a_2F_2+a_3\Delta+a_4\Sigma$).
   Using Proposition~\ref{Nabcd intersection numbers} one finds
   $$
      L\cdot\Nabcd=\frac 1D\,Q(a,b,c,d) \ ,
   $$
   where $Q$ is the quadratic form
   \begin{equation}\label{Q}\arraycolsep=0.25\arraycolsep
      Q(a,b,c,d)=\matr{a & b & c & d}
      \matr{a_1+a_3+a_4 & 0 & -a_3 & -a_4 \\
         0 & a_1+a_3+a_4 & a_4 & -a_3 \\
         -a_3 & a_4 & a_2+a_3+a_4 & 0 \\
         -a_4 & -a_3 & 0 & a_2+a_3+a_4 }
      \matr{a \\ b \\ c \\ d} \ .
   \end{equation}
   A computation shows that $Q$ is positive
   definite and of discriminant
   $$
      \delta=(L^2/2)^2 \ .
   $$

   We can now prove:

\begin{theorem}\label{computed by elliptic 2}
   Let $E_1$ be the elliptic curve with automorphism $[x]\mapsto
   [ix]$, and let $X=E_1\times E_1$. For any ample line bundle
   $L$ on $X$, the Seshadri constant $\eps(L)$ is computed by an
   elliptic curve.
\end{theorem}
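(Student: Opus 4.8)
My plan is to mimic the two-step scheme of the proof of Theorem~\ref{computed by elliptic}, but to let Proposition~\ref{prop ample submaximal} play the role that the genus estimate from \cite{BauSze98} played there. The first step is to show that every ample $L$ on $X$ admits a weakly submaximal elliptic curve. By the formula preceding the theorem, $L\cdot\Nabcd=\frac1D\,Q(a,b,c,d)$ with $Q$ the positive definite quaternary integral form \eqnref{Q} (equivalently a binary Hermitian form over $\Z[i]$ of Hermitian determinant $L^2/2$) and $D\ge1$, and the determinant of $Q$ is $\delta=(L^2/2)^2$. Hermite's theorem for quaternary forms (with Hermite constant $\gamma_4=\sqrt2$) then provides a primitive $(a,b,c,d)\in\Z^4$, not all zero, with
$$
   Q(a,b,c,d)\le\gamma_4\,\delta^{1/4}=\sqrt2\cdot(L^2/2)^{1/2}=\sqrt{L^2}\ ,
$$
so that, by Lemma~\ref{lemma HN}, $N\eqdef\Nabcd$ is an elliptic curve on $X$ with $L\cdot N=Q(a,b,c,d)/D\le\sqrt{L^2}$, whence $\eps(L)\le L\cdot N\le\sqrt{L^2}$.

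The second step rules out that a curve of arithmetic genus $\ge2$ computes $\eps(L)$. If $\eps(L)=\sqrt{L^2}$ this is immediate: then $L\cdot N\le\sqrt{L^2}=\eps(L)\le L\cdot N$ forces $L\cdot N=\eps(L)$, and $N$ computes $\eps(L)$. If $\eps(L)<\sqrt{L^2}$, I would choose an irreducible curve $C$ and a point $x$ with $L\cdot C/\mult_xC=\eps(L)$; I may assume $C$ is not elliptic (nor one of $F_1,F_2,\Delta,\Sigma$), so, since abelian surfaces carry no rational curves and every irreducible curve of arithmetic genus $1$ on an abelian surface is a translate of an elliptic subgroup, $p_a(C)\ge2$ and hence $C^2=2p_a(C)-2>0$. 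Here $C$ is submaximal for $L$ at $x$, and a Hodge-index argument shows $C$ is even ample: for an irreducible $C'\ne C$ one has $C\cdot C'\ge0$, and $C\cdot C'=0$ would put $C'$ in the negative definite part of $C^\perp$, forcing $(C')^2<0$, which is impossible on an abelian surface; together with $C^2>0$ this yields ampleness by Nakai--Moishezon.

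Then I would invoke Proposition~\ref{prop ample submaximal}: $C$ being an irreducible ample curve submaximal for $L$ at $x$, it computes $\eps(\O_X(C),x)$, and --- since the inequality in the proof of that proposition is strict, so that one may use a value $\xi<1$ in Lemma~\ref{useful lemma} --- $C$ is the \emph{only} weakly submaximal curve for $\O_X(C)$ at $x$. But $\O_X(C)$ is ample, so the first step applied to $\O_X(C)$ produces an elliptic curve $N'$ with $C\cdot N'\le\sqrt{C^2}$; translating $N'$ so that it passes through $x$ (which leaves the intersection number unchanged) gives a weakly submaximal curve for $\O_X(C)$ at $x$ that is elliptic, hence different from $C$ --- a contradiction. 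Therefore $\eps(L)$ is computed by an elliptic curve.

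The delicate point, which I expect to require the most care, is the reduction to Proposition~\ref{prop ample submaximal}: one must check that a genus-$\ge2$ curve computing $\eps(L)$ is necessarily ample, which is exactly the Hodge-index step and rests on the absence of rational curves --- and more generally of singular irreducible curves of geometric genus $\le1$ --- on an abelian surface, and one must be willing to use the first step a second time, for the bundle $\O_X(C)$. Once these are in hand the argument is soft: in contrast to Theorem~\ref{computed by elliptic}(ii), no quantitative estimate of how submaximal a higher-genus curve can be is needed, because the rigidity furnished by Proposition~\ref{prop ample submaximal} does all the work.
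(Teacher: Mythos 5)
Your proposal is correct and follows essentially the same two-step strategy as the paper: first produce a weakly submaximal elliptic curve $\Nabcd$ by minimizing the quaternary form $Q$ via the Hermite/Mahler bound $\sqrt2\,\delta^{1/4}=\sqrt{L^2}$, then exclude higher-genus competitors by combining Proposition~\ref{prop ample submaximal} with a second application of the first step to $\O_X(C)$ and Lemma~\ref{useful lemma}. The only differences are that you spell out details the paper leaves implicit (the Hodge-index proof that a non-elliptic irreducible curve on an abelian surface is ample, and the boundary case $\eps(L)=\sqrt{L^2}$) and that you bypass the reduction lemma by simply using $D\ge1$, which is harmless for the upper bound.
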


   In the proof
   we will make use of
   the following result from the the geometry
   of numbers (see \cite{GL}, Chapter~6).

\begin{theorem}[Mahler]\label{thm Mahler}
   Let $Q$ be a positive definite quadratic form of four
   variables with discriminant $\delta$. Then there is a non-zero
   point $p\in\Z^4$ such that
   $$
      Q(p)\le\sqrt 2\sqrt[4]{\delta} \ .
   $$
\end{theorem}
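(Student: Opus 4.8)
The statement is the sharp value of Hermite's constant in four variables, $\gamma_4=\sqrt2$, due to Korkine and Zolotareff, with equality attained by the root lattice $D_4$. I would begin with the geometric reformulation. Writing the positive definite form as $Q(p)=|Ap|^2$ for some $A\in\mathrm{GL}_4(\R)$, the lattice $\Lambda=A\Z^4$ has covolume $|\det A|$, and since the Gram matrix of $Q$ is $A^{\mathsf T}A$ we have $\delta=(\det A)^2$, i.e. $\delta^{1/4}=(\mbox{covol}\,\Lambda)^{1/2}$. Minimizing $Q$ over $\Z^4\setminus\{0\}$ is then the same as computing $m(Q)=m(\Lambda)$, the squared length of a shortest nonzero vector of $\Lambda$, and the assertion becomes $m(Q)\le\sqrt2\,(\mbox{covol}\,\Lambda)^{1/2}$.

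The naive analogue of the two-variable argument (Theorem \ref{Hermite-Lagrange}) is to take a shortest vector $v$, project $\Lambda$ orthogonally onto $v^\perp$, apply the (also classical) three-dimensional constant $\gamma_3=2^{1/3}$ to the rank-$3$ projected lattice, and lift a short vector of the projection back to $\Lambda$ with its $v$-component reduced to length $\le|v|/2$. Carrying this out yields only $\gamma_4\le\left(\frac43\gamma_3\right)^{3/4}\approx1.48$, which is strictly larger than $\sqrt2\approx1.41$. Thus the inductive projection is too lossy to reach the sharp constant, and one is forced to exploit more of the arithmetic of $Q$ through reduction theory.

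The plan is therefore to Minkowski-reduce $Q$. After reduction the first diagonal coefficient equals the minimum, $a_{11}=m(Q)$, and the diagonal entries are nondecreasing, $a_{11}\le a_{22}\le a_{33}\le a_{44}$; consequently
$$
   m(Q)^4=a_{11}^4\le a_{11}a_{22}a_{33}a_{44} \ .
$$
Everything then reduces to the sharp determinant inequality for reduced forms in four variables,
$$
   a_{11}a_{22}a_{33}a_{44}\le 4\,\delta \ ,
$$
since combining the two displays gives $m(Q)^4\le4\delta$, that is $Q(p)\le(4\delta)^{1/4}=\sqrt2\,\delta^{1/4}$ at the reduced minimal vector.

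The one genuinely hard step is this determinant inequality. The Minkowski conditions $a_{kk}\le Q(x)$ for all integer vectors $x$ with $\gcd(x_k,\dots,x_4)=1$ cut out a rational polyhedral cone of admissible Gram matrices, and the task is to maximize the scale-invariant ratio $a_{11}a_{22}a_{33}a_{44}/\delta$ over this cone. In four variables only boundedly many primitive tails $x$ can be active, so the cone is described by finitely many inequalities and the maximization is a finite, if delicate, case analysis: the supremum equals $4$ and is attained exactly at the Gram matrix of $D_4$, whose four minimal roots form a reduced basis with all $a_{ii}=2$ and $\delta=4$. I expect this classification of the boundary of the reduction domain to be the main obstacle, as it is precisely where the special geometry of dimension four enters; it is the content of the Korkine--Zolotareff computation recorded in \cite{GL}.
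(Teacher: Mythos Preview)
The paper does not prove this theorem; it is quoted as a classical result from the geometry of numbers, with a bare reference to \cite{GL}, Chapter~6. There is therefore no ``paper's proof'' against which to compare your attempt.

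Your outline is a correct route to the sharp constant $\gamma_4=\sqrt2$. The reformulation via $\Lambda=A\Z^4$ and the reduction to the determinant inequality
$$
   a_{11}a_{22}a_{33}a_{44}\le 4\,\delta
$$
for a Minkowski-reduced Gram matrix is the standard argument, and you correctly identify this inequality as the one substantive step, attributing it to Korkine--Zolotareff. Your honest observation that the na\"ive inductive projection from $\gamma_3$ loses too much is also accurate and worth recording.

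Two small comments. First, the statement that the Minkowski reduction conditions cut out a cone described by \emph{finitely} many inequalities is itself a theorem (Minkowski's finiteness for the reduction domain), so you are deferring one nontrivial fact inside your deferral of another. Second, the historical Korkine--Zolotareff argument does not literally proceed by maximizing $\prod a_{ii}/\delta$ over the reduced cone; they use their own (KZ) reduction and a direct analysis of the off-diagonal constraints in dimension~4. Either way the case analysis is finite and the extremal form is $D_4$, so your endpoint is right even if the path you sketch is a modernized variant. Since the paper itself is content to cite \cite{GL}, your sketch already goes further than what is required here.
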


\begin{proof}[Proof of Theorem~\ref{computed by elliptic 2}]
   Let $L=\O_X(a_1F_1+a_2F_2+a_3\Delta+a_4\Sigma)$ be an ample
   line bundle. We are interested in the minimum of the
   intersection numbers $L\cdot\Nabcd$ of $L$ with all elliptic
   curves $\Nabcd$. If the g.c.d.~$D$ that is associated with
   $(a,b,c,d)$ in \eqnref{gcd} is greater than one, then Lemma
   \ref{reduction lemma} (in the appendix) implies that the
   numbers $a,b,c,d$ my be replaced by numbers $\bar a,\bar
   b,\bar c,\bar d$ such that the corresponding g.c.d.~$\bar D$
   equals one, without altering the intersection product
   $L\cdot\Nabcd$ in the process. The upshot of this argument is
   that the intersection product $L\cdot\Nabcd$ may be minimized
   by taking the minimum of $Q$.

   Now, by
   Theorem~\ref{thm Mahler} there are integers $a,b,c,d$, not all
   of them zero, such that
   $$
      L\cdot N_{a,b,c,d}\le\sqrt
      2\,\sqrt[4]{\left(\frac{L^2}2\right)^2}=\sqrt{L^2} \ .
   $$
   To complete the proof, it therefore
   remains to show that there cannot be a
   curve of genus $>1$ computing $\eps(L)$.
   So suppose by way of contradiction
   that there is a submaximal curve $C$ for $L$ that is not
   elliptic. Since a non-elliptic curve on an abelian surface is
   automatically ample, we see from
   Proposition~\ref{prop ample submaximal} that $C$ is then
   submaximal for $\O_X(C)$ as well.
   On the other hand, applying to $\O_X(C)$
   the argument that we applied to $L$ at the
   beginning of the proof, we find that
   there is an elliptic curve $N$ with
   $$
      C\cdot N\le\sqrt{C^2} \ .
   $$
   But then, by Lemma~\ref{useful lemma}, $N$ would have to be a
   component of $C$, and this is a contradiction.
\end{proof}

   Our second aim in this section is to explicitly determine the
   Seshadri constants for all ample line bundles on $X$, i.e., to
   provide an analogue of Theorem~\ref{SC theorem}. It seems
   difficult to achieve this using the same methods that we
   applied in Section~\ref{sect without cx mult}. First, the
   increased number of variables makes it hard to derive direct
   estimates. Secondly, the analogue of Lemma~\ref{permutations}
   is not true, i.e., the generators of $\NS(X)$ may not be
   interchanged in arguments involving intersection numbers. For
   these reasons we proceed along a different path here, using
   a little elementary real analysis to obtain the desired bounds.

   Let us fix notation for the following lemma. If $M$ is a
   subset of $\R^n$, then $U_i(M)$ will denote the set of all
   points
   $(\liste x1n)\in\R^n$ such that there is an $(\liste m1n)\in M$ satisfying
   $\abs{x_i-m_i}\le 1$.

\begin{lemma}\label{analysis lemma}
   Let $f:\R^n\to\R$ be a partially differentiable function. Then
   the points,
   at which the restricted function $f\restr{\Z^n}$ is minimal,
   lie in the intersection
   $$
       \bigcap_{i=1}^n \
       U_i\Big(\Big\{x\in\R^n\with\pd f{x_i}(x)=0\Big\}\Big)
       \ .
   $$
\end{lemma}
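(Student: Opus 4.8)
The plan is to argue contrapositively: if a point of $\Z^n$ fails to lie in one of the sets $U_i(\{x : \partial f/\partial x_i(x)=0\})$, then it cannot be a minimizer of $f\restr{\Z^n}$. So fix an index $i$ and a lattice point $p=(p_1,\dots,p_n)\in\Z^n$ with $p\notin U_i(\{x:\partial f/\partial x_i(x)=0\})$. By definition of $U_i$, there is \emph{no} real point $x$ with $\partial f/\partial x_i(x)=0$ and $|x_i-p_i|\le 1$; equivalently, along the whole segment in the $i$-th coordinate direction from $(p_1,\dots,p_i-1,\dots,p_n)$ to $(p_1,\dots,p_i+1,\dots,p_n)$ — more precisely on the closed interval $p_i-1\le t\le p_i+1$ with the other coordinates frozen at $p_1,\dots,\widehat{p_i},\dots,p_n$ — the partial derivative $\partial f/\partial x_i$ never vanishes.

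The one-variable function $g(t)\eqdef f(p_1,\dots,t,\dots,p_n)$ is differentiable on $[p_i-1,p_i+1]$ with $g'(t)=\partial f/\partial x_i$ evaluated at that point, and by the previous paragraph $g'$ has no zero on this interval. Since $g'$ is a derivative, it has the intermediate value property (Darboux's theorem), so $g'$ has constant sign on $[p_i-1,p_i+1]$; hence $g$ is strictly monotone there. Therefore either $g(p_i-1)<g(p_i)$ or $g(p_i+1)<g(p_i)$. In the first case the lattice point $(p_1,\dots,p_i-1,\dots,p_n)$ has strictly smaller $f$-value than $p$; in the second case $(p_1,\dots,p_i+1,\dots,p_n)$ does. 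Either way $p$ is not a point at which $f\restr{\Z^n}$ attains its minimum.

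The only genuinely delicate point is the appeal to Darboux's theorem: $f$ is assumed merely partially differentiable, so $g'$ need not be continuous, and without the intermediate value property for derivatives one could not conclude that a nonvanishing $g'$ has constant sign. Everything else — restricting to a coordinate line, reading off monotonicity from the sign of $g'$, and comparing $f$-values at the two neighbouring lattice points — is routine. Note also that the statement is only about points where the restricted minimum is \emph{attained}; if no minimum exists the claim is vacuous, so there is no convergence issue to address.
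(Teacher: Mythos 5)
Your proof is correct and follows essentially the same route as the paper's: restrict $f$ to the coordinate line through the lattice point and compare its values with those at the two neighbouring lattice points at distance one. The only difference is that the paper argues directly (a minimizer of $f\restr{\Z^n}$ forces the one-variable restriction $g$ to attain an interior minimum on $[m_i-1,m_i+1]$, where $g'$ vanishes by Fermat's interior-extremum criterion), whereas you take the contrapositive and invoke Darboux's theorem to get constant sign of $g'$ and strict monotonicity; both one-variable facts are valid for arbitrary (possibly discontinuous) derivatives, so your extra caution about Darboux is well placed but the paper's variant sidesteps the issue entirely.
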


\begin{proof}
   Suppose that $f\restr{\Z^n}$ is minimal at $m=(m_1,\dots,m_n)\in\Z^n$. Then
   $$
      f(m)\le f(m_1-1,m_2,\dots,m_n)
      \midtext{and}
      f(m)\le f(m_1+1,m_2,\dots,m_n) \ ,
   $$
   hence the function $t\mapsto f(t,m_2,\dots,m_n)$ assumes
   a
   local minimum at some point $t_1$ of
   the interval $[m_1-1,m_1+1]$. The partial derivative of
   $f$ vanishes then at $(t_1,m_2,\dots,m_n)$, which just means
   that $m$ is contained in the set
   $$
       U_1\Big(\Big\{x\in\R^n\with\pd f{x_i}(x)=0\Big\}\Big) \ .
   $$
   The analogous statement holds for $i=2,\dots,n$.
\end{proof}

   We are now ready to prove:

\begin{theorem}\label{SC theorem cx mult}
   Let $E_1$ be the elliptic curve admitting the automorphism
   $[x]\mapsto [ix]$, i.e., $E=\C/(\Z+i\Z)$, and let
   $L=\O_X(a_1F_1+a_2F_2+a_3\Delta+a_4\Sigma)$ be any ample
   line bundle
   on the abelian surface $X=E_1\times E_1$. Then
   \be
      && \eps(L) =
      \!\!\!\!
      \min\twoconditions{a,b,c,d\in\Z}{\abs a,\abs b,\abs c, \abs d\le B}
      \!\!\!\!
         \Big\{a_1(a^2+b^2)+a_2(c^2+d^2)
         \\[-3ex]
         && \hspace{10em} {}+a_3((a-c)^2+(b-d)^2)
         +a_4((a-d)^2+(b+c)^2)\Big\}
   \ee
   where
   $$
      B \eqdef\frac{8\max\set{\abs{a_1+a_3+a_4}^2,\abs{a_3}^2,\abs{a_4}^2,\abs{a_2+a_3+a_4}^2}}
         {a_1a_2+a_1a_3+a_1a_4+a_2a_3+a_2a_4+2a_3a_4} \ .
   $$
\end{theorem}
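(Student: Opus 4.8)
The plan is to reduce the assertion to a problem about the minimum of the
   positive definite quadratic form $Q$ of \eqnref{Q} on the lattice $\Z^4$,
   and then to pin down a minimizing lattice point by combining (the argument
   behind) Lemma~\ref{analysis lemma} with an eigenvalue estimate for $Q$. For
   the reduction: since $\End(E_1)=\Z+i\Z$, Lemma~\ref{lemma HN} and
   Proposition~\ref{Nabcd intersection numbers} show that every elliptic curve
   on $X$ is a translate of some $\Nabcd$ with $L\cdot\Nabcd=Q(a,b,c,d)/D$,
   where $D$ is the integer defined in \eqnref{gcd}. By
   Theorem~\ref{computed by elliptic 2} the Seshadri constant is computed by
   such a curve, and -- arguing exactly as in the proof of that theorem, using
   Lemma~\ref{reduction lemma} to absorb the factor $D$ -- one obtains
   $$
      \eps(L)=\min\set{Q(a,b,c,d)\with(a,b,c,d)\in\Z^4\setminus\{0\}} \ .
   $$
   Moreover $D\ge 1$ and $\Nabcd$ is an elliptic curve for every nonzero
   $(a,b,c,d)$, so $Q(a,b,c,d)=D\cdot(L\cdot\Nabcd)\ge\eps(L)$ for all such
   $(a,b,c,d)$; hence it remains only to exhibit a lattice point
   $(a,b,c,d)\ne 0$ realizing this minimum with
   $\abs a,\abs b,\abs c,\abs d\le B$.

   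To bound a minimizer, write $Q(x)=\langle x,Ax\rangle$ with $A$ the
   symmetric matrix of \eqnref{Q}, whose diagonal entries are
   $A_{11}=A_{22}=a_1+a_3+a_4>0$ and $A_{33}=A_{44}=a_2+a_3+a_4>0$, and set
   $M=\max\set{\abs{a_1+a_3+a_4}^2,\abs{a_3}^2,\abs{a_4}^2,\abs{a_2+a_3+a_4}^2}$,
   so that $B=8M/(L^2/2)$ and, because $L^2/2=A_{11}A_{33}-a_3^2-a_4^2\le
   A_{11}A_{33}\le M$, also $B\ge 8$. Let $p_0=(a,b,c,d)$ attain
   $\min_{\Z^4\setminus\{0\}}Q$. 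If $p_0=\pm e_i$ for a standard basis vector,
   its coordinates are $\le 1\le B$ and we are done. Otherwise $p_0\pm e_i\ne 0$
   for each $i$, so $Q(p_0)\le Q(p_0\pm e_i)$, and inspecting the upward
   parabola $t\mapsto Q(p_0+te_i)$ -- this is the argument of
   Lemma~\ref{analysis lemma} -- shows that its vertex lies in $[-1,1]$, i.e.
   $$
      \abs{(Ap_0)_i}\le A_{ii}\qquad(i=1,\dots,4) \ .
   $$
   Consequently $\norm{Ap_0}^2\le
   A_{11}^2+A_{22}^2+A_{33}^2+A_{44}^2=2A_{11}^2+2A_{33}^2\le 4M$.

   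Finally I would close the estimate by squeezing $Q(p_0)$ between two bounds.
   On one side, Cauchy--Schwarz gives
   $Q(p_0)=\langle p_0,Ap_0\rangle\le\norm{p_0}\cdot\norm{Ap_0}\le2\sqrt M\,
   \norm{p_0}$; on the other, $Q(p_0)\ge\lambda_{\min}(A)\norm{p_0}^2$. For the
   eigenvalue bound I would use the complex structure coming from $\iota$:
   under the identification $\R^4=\C^2$, $(a,b,c,d)\mapsto(z,w)=(a+ib,c+id)$,
   $$
      Q=(a_1+a_3+a_4)\abs z^2+(a_2+a_3+a_4)\abs w^2
        -2\,\mathrm{Re}\bigl((a_3+ia_4)z\bar w\bigr) \ ,
   $$
   so $Q$ is the realification of a positive definite Hermitian form in
   $(z,w)$ whose $2\times2$ matrix $H$ has
   $\det H=(a_1+a_3+a_4)(a_2+a_3+a_4)-(a_3^2+a_4^2)=L^2/2$. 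Hence the
   eigenvalues of $A$ are those of $H$, each with multiplicity two, and since
   $H$ is positive definite we have $\lambda_{\max}(H)\le A_{11}+A_{33}\le
   2\sqrt M$, so
   $$
      \lambda_{\min}(A)=\lambda_{\min}(H)=\frac{\det H}{\lambda_{\max}(H)}
      \ge\frac{L^2/2}{2\sqrt M} \ .
   $$
   Combining the two bounds yields $\frac{L^2/2}{2\sqrt M}\norm{p_0}^2\le
   2\sqrt M\,\norm{p_0}$, hence $\norm{p_0}\le 4M/(L^2/2)\le 8M/(L^2/2)=B$,
   which is the desired bound on the coordinates of $p_0$.

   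I expect the eigenvalue estimate to be the main obstacle: the lower bound
   for $\lambda_{\min}(A)$ must be extracted in terms of $L^2$ (equivalently
   $\det A=(L^2/2)^2$) and the coefficients $a_i$ in precisely the shape
   encoded by $B$, and the natural route is to exploit complex multiplication
   by $i$ so that the $4\times4$ problem collapses to the $2\times2$ Hermitian
   matrix $H$; a purely real-variable computation would be messier and yield a
   weaker constant. The reduction and the gradient bound are, by comparison,
   routine, given Theorem~\ref{computed by elliptic 2},
   Lemma~\ref{reduction lemma}, and Lemma~\ref{analysis lemma}.
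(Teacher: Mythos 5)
Your proposal is correct and follows essentially the same route as the paper's proof: reduce to minimizing the form $Q$ of \eqnref{Q} on $\Z^4\setminus\{0\}$ via Theorem~\ref{computed by elliptic 2} and Lemma~\ref{reduction lemma}, confine a minimizer $p_0$ by the discrete first-order condition of Lemma~\ref{analysis lemma} (your bound $\abs{(Ap_0)_i}\le A_{ii}$ is exactly the paper's paralleloid), and then bound $\norm{p_0}$ by an estimate equivalent to the paper's $\norm{x}\le\norm{M\inverse}\cdot\norm{(\pm A_{ii})_i}$. The only difference is that where the paper asserts \engqq{with a computation one finds} that this is at most $B$, you supply that computation explicitly -- and verifiably correctly -- by realizing $A$ as the realification of a $2\times2$ Hermitian matrix $H$ with $\det H=L^2/2$, so that $\lambda_{\min}(A)=\det H/\lambda_{\max}(H)\ge (L^2/2)/(2\sqrt M)$ yields $\norm{p_0}\le 4M/(L^2/2)\le B$.
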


   As shown in Table~\ref{table eps cx mult}, the theorem can be used
   to effectively compute Seshadri constants from the
   coefficients $a_1,a_2,a_3,a_4$ of the line bundle.

\begin{table}
   \centering\footnotesize

\begin{tabular}{cccc@{\qquad}cccl}
$a_1$ & $a_2$ & $a_3$ & $a_4$ & $L^2$ & $\sqrt{L^2}$ & $\eps(L)$ & curves computing $\eps(L)$ \\ \tline
1 & 1 & 1 & 1 & 14 & $\approx 3,74$ & 3 & $ F_1, F_2$ \\
1 & 1 & 0 & 0 & 2 & $\approx 1,41$ & 1 & $ F_1, F_2$ \\
2 & 1 & 0 & 0 & 4 & 2 & 1 & $ F_1$ \\
0 & 0 & 1 & 1 & 4 & 2 & 2 & $F_1, F_2, \Delta, \Sigma, N_{ 1, 1, 0, 1}, N_{ 1, 0, 1, 1} $ \\
1 & 0 & 1 & 1 & 8 & $\approx 2,83$ & 2 & $F_1$ \\
1 & 1 & 1 & 0 & 6 & $\approx 2,45$ & 2 & $F_1, F_2, \Delta$ \\
2 & 2 & 1 & $-1$ & 4 & 2 & 2 & $F_1, F_2, \Delta, N_{ 1, 1, 1, 0}, N_{ 1, 0, 1, -1}, N_{ 1, 0, 0, -1} $ \\
$-1$ & 1 & 2 & 2 & 14 & $\approx 3,74$ & 3 & $ F_2, N_{ 1, 1, 0, 1}$ \\
$-1$ & 2 & 1 & 2 & 10 & $\approx 3,16$ & 2 & $ F_2$\\
4 & 4 & $-1$ & $-1$ & 4 & 2 & 2 & $F_1, F_2, N_{ 1, 1, 0, -1}, N_{ 1, 0, 0, -1}, N_{ 1, 0, -1, 0}, N_{ -1, 0, 1, 1} $\\
4 & 2 & 3 & $-2$ & 4 & 2 & 1 & $ N_{0, 1, 1, 1}$ \\
8 & 5 & $-1$ & $-2$ & 10 & $\approx 3,16$ & 2 & $F_1$ \\ \tline
\end{tabular}

   \caption{\label{table eps cx mult}
      Seshadri constants of the line bundles
      $L=\O_X(a_1F_1+a_2F_2+a_3\Delta+a_4\Sigma)$
      on $X=E_1\times E_1$
      }
\end{table}

\begin{proof}
   By the argument employed at the beginning of the proof of
   Theorem~\ref{computed by elliptic 2}, our task is to minimize
   the restriction $Q\restr{\Z^4}$. According to Lemma
   \ref{analysis lemma}, the points where this function is minimal
   lie in the intersection
   $\bigcap_{i=1}^4 U_i(\{x\in\R^n\with\pd Q{x_i}(x)=0\})$.
   We have for $x\in\R^4$
   $$
      \pd Q{x_1}(x)=2(a_1+a_3+a_4,0,-a_3,-a_4)\cdot x \ ,
   $$
   so the set of points in $\R^4$ whose first component has
   distance~1 from
   $\{\pd Q{x_1}=0\}$ is the union of the two affine
   hyperplanes
   $$
      \set{x\in\R^4\with (a_1+a_3+a_4,0,-a_3,-a_4)\cdot x
      = \pm (a_1+a_3+a_4) } \ ,
   $$
   and consequently
   $U_1(\{\pd Q{x_1}=0\})$
   is the set of points between these two hyperplanes.
   The intersection
   $\bigcap_{i=1}^4  U_i(\{\pd Q{x_i}=0\})$ is therefore a paralleloid,
   whose vertices are the solutions of the sixteen equations
   $$
      M\cdot x=\matr{
         \pm (a_1+a_3+a_4) \\
         \pm (a_1+a_3+a_4) \\
         \pm (a_2+a_3+a_4) \\
         \pm (a_2+a_3+a_4) } \ ,
   $$
   where $M$ is the matrix defining $Q$ in \eqnref{Q}.
   The lengths of these vertices, and therefore of all points in
   the paralleloid, are bounded from above by
   \be
      \norm x
      \le\norm{M\inverse}\cdot
      \norm{
         \matr{
            { a_1+a_3+a_4 } \\
            { a_1+a_3+a_4 } \\
            { a_2+a_3+a_4 } \\
            { a_2+a_3+a_4 } } } \ ,
   \ee
   and with a computation one finds that
   the right hand side is in turn bounded by the
   number $B$ defined in the statement of the theorem.
\end{proof}

\subsection{Complex multiplication by $e^{\pi i/3}$}

   We now turn to the elliptic curve $E_2$ with automorphism
   $\sigma:[x]\mapsto[e^{\pi i/3}x]$ and study the surface
   $X=E_2\times E_2$.
   A result analogous to Theorem \ref{SC theorem cx mult}
   holds in this case:

\begin{theorem}\label{SC theorem cx mult 2}
   Let $E_2$ be the elliptic curve admitting the automorphism
   $[x]\mapsto [e^{\pi i/3}x]$, i.e.,
   $E_2=\C / \Z+e^{\pi i/3}\Z$,
   and let
   $L=\O_X(a_1F_1+a_2F_2+a_3\Delta+a_4\Sigma)$ be any ample
   line bundle
   on the abelian surface $X=E_2\times E_2$. Then
   \be
      && \eps(L) =
      \!\!\!\!
      \min\twoconditions{a,b,c,d\in\Z}{\abs a,\abs b,\abs c, \abs d\le B}
      \!\!\!\!
         \Big\{a_1(a^2+ab+b^2)+a_2(c^2+cd+d^2)
         \\[-3ex]
         && \hspace{10em} {}+a_3((a-c)^2+(a-c)(b-d)+(b-d)^2)
         \\
         && \hspace{10em}
         {}+a_4((-a-b+d)^2+(-a-b+d)(b+c)+(b+c)^2)\Big\} \ ,
   \ee
   where
   $$
      B\eqdef\frac{8\max\set{\abs{2a_1+2a_3+2a_4}^2,\abs{2a_3+a_4}^2,\abs{a_3+2a_4}^2,\abs{a_3-a_4}^2,\abs{2a_2+2a_3+2a_4}^2}}
         {3(a_1a_2+a_1a_3+a_1a_4+a_2a_3+a_2a_4+a_3a_4)} \ .
   $$
\end{theorem}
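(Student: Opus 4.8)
The plan is to transcribe, almost line for line, the proof of Theorem~\ref{SC theorem cx mult}, replacing the Gaussian integers $\Z+i\Z$ by the ring $\End(E_2)=\Z+\sigma\Z$, where $\sigma=e^{\pi i/3}$ now satisfies $\sigma^2=\sigma-1$ and carries the hexagonal norm form $N(a+b\sigma)=\abs{a+b\sigma}^2=a^2+ab+b^2$. The structure of the argument is unchanged; only a few number-theoretic and linear-algebraic computations are different, and carrying those out correctly --- above all the intersection of an elliptic curve with $\Sigma$ --- is where the work lies.

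First I would redo the geometry of Section~\ref{sect cx mult}. By the Hayashida--Nishi lemma (Lemma~\ref{lemma HN}), every elliptic curve on $X=E_2\times E_2$ is a translate of $\Nabcd=\set{(ax+b\sigma x,\,cx+d\sigma x)\with x\in E_2}$ for coprime integers $a,b,c,d$. The analogue of Lemma~\ref{number of solutions} asserts that for $(a,b)\neq(0,0)$ the equation $ax+b\sigma x=0$ on $E_2$ has exactly $N(a+b\sigma)=a^2+ab+b^2$ solutions; the proof is the same as before, using $\sigma^2=\sigma-1$ to see that every solution is an $(a^2+ab+b^2)$-division point and then counting the division points satisfying the two resulting congruences (first the coprime case, then the general one by pulling back along a multiplication map). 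The analogue of Lemma~\ref{degree of sigma} gives that the parametrizing map $E_2\to\Nabcd$ has degree $D$, the greatest common divisor of $a^2+ab+b^2$, $c^2+cd+d^2$ and the two cross quantities arising from $(a+b\sigma)\overline{(c+d\sigma)}$ (using $\bar\sigma=1-\sigma$). Substituting into \eqnref{intersection N F1} and its analogues then yields
$$
   \Nabcd\cdot F_1=\frac{a^2+ab+b^2}D,\qquad \Nabcd\cdot F_2=\frac{c^2+cd+d^2}D,
$$
and --- here using $\sigma^2=\sigma-1$ when solving $\sigma(ax+b\sigma x)=cx+d\sigma x$ ---
$$
   \Nabcd\cdot\Delta=\frac{(a-c)^2+(a-c)(b-d)+(b-d)^2}D,\qquad \Nabcd\cdot\Sigma=\frac{(-a-b+d)^2+(-a-b+d)(b+c)+(b+c)^2}D.
$$
Hence $L\cdot\Nabcd=\frac1D\,Q(a,b,c,d)$ with $Q$ the form in the statement, and a direct computation shows $Q$ is positive definite precisely when $L$ is ample, with discriminant $\frac{9}{16}(L^2/2)^2$; equivalently the integer matrix $\tilde M$ obtained from the Gram matrix of $Q$ by doubling has $\det\tilde M=9(L^2/2)^2$, diagonal entries $2a_1+2a_3+2a_4$ and $2a_2+2a_3+2a_4$, and off-diagonal entries lying (up to sign) in $\set{a_1+a_3+a_4,\ a_2+a_3+a_4,\ 2a_3+a_4,\ a_3+2a_4,\ a_3-a_4}$. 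The determinant $3$ of the hexagonal form $a^2+ab+b^2$ is exactly what produces the factor $3$ in the denominator of $B$, since that denominator is $\sqrt{\det\tilde M}=3(a_1a_2+a_1a_3+a_1a_4+a_2a_3+a_2a_4+a_3a_4)$.

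With this in hand the analogue of Theorem~\ref{computed by elliptic 2} goes through verbatim: by the reduction lemma (Lemma~\ref{reduction lemma} in the appendix) one may assume $D=1$, so minimizing $L\cdot\Nabcd$ over elliptic curves reduces to minimizing $Q$ on $\Z^4\setminus\set0$; Mahler's theorem (Theorem~\ref{thm Mahler}) then produces a non-zero integer point with $Q\le\sqrt2\sqrt[4]\delta=\frac{\sqrt3}{2}\sqrt{L^2}<\sqrt{L^2}$, so some $\Nabcd$ is submaximal for $L$, while a non-elliptic irreducible curve $C$ on $X$ is ample, hence submaximal for $\O_X(C)$ by Proposition~\ref{prop ample submaximal}, and running the Mahler argument with $\O_X(C)$ together with Lemma~\ref{useful lemma} forces a contradiction. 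So $\eps(L)=\min_{(a,b,c,d)}Q(a,b,c,d)$. To cut this minimum down to the box $\abs a,\abs b,\abs c,\abs d\le B$ I would argue exactly as in the proof of Theorem~\ref{SC theorem cx mult} via Lemma~\ref{analysis lemma}: the four equations $\pd Q{x_i}=0$ define four hyperplanes in $\R^4$, the thickened intersection $\bigcap_{i=1}^4 U_i(\set{\pd Q{x_i}=0})$ is a paralleloid whose vertices solve the sixteen systems
$$
   \tilde M\,x=\matr{\pm(2a_1+2a_3+2a_4)\\ \pm(2a_1+2a_3+2a_4)\\ \pm(2a_2+2a_3+2a_4)\\ \pm(2a_2+2a_3+2a_4)},
$$
and bounding $\norm x\le\norm{\tilde M\inverse}\cdot\norm{(\,\cdot\,)}$, with $\norm{\tilde M\inverse}$ estimated in terms of the entries of $\tilde M$ and $\det\tilde M=9(L^2/2)^2$, gives the bound $B$.

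The conceptual content is thus identical to the complex-multiplication-by-$i$ case, and the only real work is computational. I expect the genuinely delicate step to be the $\Sigma$-intersection: the substitution $\sigma^2=\sigma-1$ in $\sigma(ax+b\sigma x)=cx+d\sigma x$ has to be made at the right moment, and it is this substitution that produces the lopsided form $(-a-b+d)^2+(-a-b+d)(b+c)+(b+c)^2$ and hence the asymmetric off-diagonal entries of $\tilde M$. Getting those entries right, and then carrying out the explicit norm estimate for $\tilde M\inverse$ so that $B$ emerges in precisely the stated form (factor $3$ included), is where care is needed; everything else is a routine adaptation of Section~\ref{sect cx mult}.
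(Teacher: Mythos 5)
Your proposal follows the same route as the paper, which itself only sketches this case as an adaptation of Theorem~\ref{SC theorem cx mult}: the same intersection formulas for $\Nabcd$, the same quadratic form $Q$ (with discriminant $\frac{9}{16}(L^2/2)^2$, whence the factor $3$ in the denominator of $B$), Mahler's theorem to produce a submaximal elliptic curve, and the same paralleloid estimate via Lemma~\ref{analysis lemma}; your computation of the $\Sigma$-intersection using $\sigma^2=\sigma-1$ checks out. The only slip is that the reduction to $D=1$ should invoke Lemma~\ref{reduction lemma 2} rather than Lemma~\ref{reduction lemma}, which is precisely the auxiliary statement the paper supplies for this case.
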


   While the proof
   follows the same general strategy that we used for
   Theorem \ref{SC theorem cx mult},
   it is not totally analogous.
   In the remainder of this section we will
   indicate the course of the argument, mainly emphasizing
   the new aspects and formulas,
   without repeating arguments that
   can be adapted from the previous case.

   The automorphism $\sigma$ has the point $[0]$ as
   its only fixed point. The fibers $F_1,F_2$, the diagonal
   $\Delta$, and the graph $\Sigma$ of $\sigma$ generate the
   N\'eron-Severi group of $X$, and they have the
   intersection numbers
   $$
      F_1^2=F_2^2=\Delta^2=\Sigma^2=0
   $$
   and
   $$
      F_1\cdot F_2=F_1\cdot\Delta=F_2\cdot\Delta=F_1\cdot\Sigma=F_2\cdot\Sigma
      =\Delta\cdot\Sigma=1 \ .
   $$
   A line bundle $L=\O_X(a_1F_1+a_2F_2+a_3\Delta+a_4\Sigma)$ is
   ample if and only if
   \be
     & a_2+a_3+a_4=L\cdot F_1>0, \quad a_1+a_3+a_4=L\cdot F_2>0,\\
     & a_1+a_2+a_4=L\cdot\Delta>0,\quad a_1+a_2+a_3=L\cdot\Sigma>0,\\
     & 2(a_1a_2+a_1a_3+a_1a_4+a_2a_3+a_2a_4+a_3a_4)=L^2>0 \ .
   \ee
   As before, the elliptic curves on $X$ are given as
   the images $\Nabcd$ under suitable maps $E_2\times E_2\to X$.
   For their
   intersection numbers one obtains
   \be
      \displaystyle\Nabcd\cdot F_1&=&\frac{a^2+ab+b^2}D \\
      \displaystyle\Nabcd\cdot F_2&=&\frac{c^2+cd+d^2}D \\
      \displaystyle\Nabcd\cdot \Delta&=&\frac{(a-c)^2+(a-c)(b-d)+(b-d)^2}D \\
      \displaystyle\Nabcd\cdot \Gamma&=&\frac{(-a-b+d)^2+(-a-b+d)(b+c)+(b+c)^2}D
   \ee
   where one sets
   $$
      D \eqdef\gcd(a^2+ab+b^2,c^2+cd+d^2, ac+bc+bd,ad-bc) \ .
   $$
   In order to see this, one proves statements similar to
   Lemma~\ref{number of solutions} and
   Lemma~\ref{degree of sigma}.

   The next step is then to
   show that
   there is a submaximal elliptic curve for
   every ample line bundle on $X$.
   This is accomplished by considering the quadratic form
   $Q(a,b,c,d)$ given
   by the matrix
   $$
      \renewcommand\arraystretch{1.2}
      \matr{
         a_1+a_3+a_4 & \frac12(a_1+a_3+a_4) & \frac12(-2a_3-a_4) & \frac12(-a_3-2a_4) \\
         \frac12(a_1+a_3+a_4) & a_1+a_3+a_4 & \frac12(-a_3+a_4) & \frac12(-2a_3-a_4) \\
         \frac12(-2a_3-a_4) & \frac12(-a_3+a_4) & a_2+a_3+a_4 & \frac12(a_2+a_3+a_4) \\
         \frac12(-a_3-2a_4) & \frac12(-2a_3-a_4) & \frac12(a_2+a_3+a_4) & a_2+a_3+a_4
      }
   $$
   which governs the intersection numbers
   $L\cdot\Nabcd$.
   Finally, a minimization argument then leads to the
   estimates in Theorem~\ref{SC theorem cx mult 2}.
   A crucial auxiliary lemma that is needed for the proof
   (in the same way as
   Lemma~\ref{reduction lemma} is required for
   Theorem~\ref{SC theorem cx mult})
   is stated in the appendix as
   Lemma~\ref{reduction lemma 2}.

\section*{Appendix}
\renewcommand\thesection{A}
\setcounter{satz}{0}

   We state and prove here the elementary number-theoretic
   lemmas
   that are needed in the course of Sect.~\ref{sect cx mult}.

\begin{lemma}\label{solutions of congruences}
   Let $a,b,c,d$ be coprime integers, and let
   $$
      D = \gcd(a^2+b^2,c^2+d^2,ac+bd,ad-bc) \ .
   $$
   Then the system of congruences
   \begin{eqnarray}
       am-bn\equiv 0 \mod D \label{equiv1} \\
       bm+an\equiv 0 \mod D \label{equiv2} \\
       cm-dn\equiv 0 \mod D \label{equiv3} \\
       dm+cn\equiv 0 \mod D \label{equiv4}
   \end{eqnarray}
   has exactly $D$ solutions $(m,n)$ modulo $D$.
\end{lemma}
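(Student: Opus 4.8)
The plan is to reduce the system modulo $D$ to a problem about the finite ring $R = \Z/D\Z$ and count solutions by exploiting the coprimality of $a,b,c,d$. First I would observe that the four congruences say precisely that the pair $(m,n)$, viewed in $R^2$, is killed by each of the four $R$-linear maps given by the matrices $\left(\begin{smallmatrix} a & -b \\ b & a\end{smallmatrix}\right)$, $\left(\begin{smallmatrix} c & -d \\ d & c\end{smallmatrix}\right)$, and their companions; equivalently, writing $z = m + in \in R[i] = \Z[i]/D\Z[i]$, the conditions \eqref{equiv1}--\eqref{equiv2} say $(a+bi)z \equiv 0$ and \eqref{equiv3}--\eqref{equiv4} say $(c+di)z \equiv 0$ in $R[i]$ (note $(a+bi)(m+in) = (am-bn) + i(bm+an)$, matching the first pair, and similarly for the second). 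So the solution set is the annihilator in $\Z[i]/D\Z[i]$ of the ideal generated by $a+bi$ and $c+di$, and I must show this annihilator has exactly $D$ elements.

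The key number-theoretic input is an identification of $D$: I claim $D = \gcd(a^2+b^2,\,c^2+d^2,\,ac+bd,\,ad-bc)$ is exactly the norm of the Gaussian-integer gcd of $a+bi$ and $c+di$ — or, more to the point, that the ideal $\mathfrak a = (a+bi,\,c+di)$ in $\Z[i]$ satisfies $\mathfrak a \bar{\mathfrak a} = (D)$, where $\bar{\mathfrak a}$ is the conjugate ideal. Indeed $\mathfrak a\bar{\mathfrak a}$ is generated over $\Z[i]$ by the four products $(a+bi)(a-bi) = a^2+b^2$, $(c+di)(c-di) = c^2+d^2$, $(a+bi)(c-di) = (ac+bd) + (bc-ad)i$ and its conjugate, hence as an ideal it is generated by $a^2+b^2$, $c^2+d^2$, $ac+bd$, and $ad-bc$; since all these generators are rational integers and $\Z[i]$ is a PID, $\mathfrak a\bar{\mathfrak a}$ is the principal ideal generated by their gcd, which is $D$. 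Moreover $\mathfrak a$ is coprime to $(D)/\mathfrak a\cdot(\text{something})$ — more precisely, since $a,b,c,d$ are coprime, $\mathfrak a$ is not contained in any $(p)$ for a rational prime $p$, so $\mathfrak a$ and $\bar{\mathfrak a}$ are coprime at every split or ramified prime, which gives the clean factorization I want.

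From $\mathfrak a \bar{\mathfrak a} = (D)$ I can finish by a counting argument in $\Z[i]/(D)$. The annihilator of the image of $\mathfrak a$ in $\Z[i]/(D)$ is $(D\Z[i] : \mathfrak a) = \bar{\mathfrak a}/D\Z[i]$ (using $\mathfrak a\bar{\mathfrak a} = (D)$ together with the coprimality of $\mathfrak a$ and $\bar{\mathfrak a}$ away from inert primes), and by the standard norm computation $\#(\Z[i]/\bar{\mathfrak a}) = N(\bar{\mathfrak a}) = N(\mathfrak a) = D$, whence $\#(\bar{\mathfrak a}/D\Z[i]) = \#(\Z[i]/D\Z[i]) / \#(\Z[i]/\bar{\mathfrak a}) = D^2/D = D$. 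Alternatively, to keep the proof entirely elementary and avoid invoking ideal theory, I would do the counting prime by prime: by the Chinese Remainder Theorem it suffices to treat $D$ a prime power $p^k$, split into the cases $p$ inert, $p$ split, $p = 2$ ramified, and in each case directly count the $z \in \Z[i]/p^k\Z[i]$ annihilated by both $a+bi$ and $c+di$, using that not both of $a+bi, c+di$ lie in a proper ideal above $p$. The main obstacle is precisely this local bookkeeping at ramified and split primes — making sure the coprimality hypothesis on $a,b,c,d$ is used correctly to pin down the exact power of each prime in $D$ and to rule out extra solutions; the inert case is immediate since then $\Z[i]/p^k$ has no zero divisors of the relevant kind unless $p^k \mid D$ forces $p^k$ to divide one of the generators. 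I would also record explicitly, for use in the proof of Lemma~\ref{degree of sigma}, that these $D$ solutions are exactly the multiples in $\Z[i]/D\Z[i]$ of $D/\gcd(D, \cdot)$-type elements, but that is a routine consequence of the counting.
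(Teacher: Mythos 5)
Your proof takes a genuinely different route from the paper's. The paper argues entirely by elementary congruence manipulation: it first shows there are at most $D$ solutions (writing $\ell_1a+\ell_2b+\ell_3c+\ell_4d=1$ to see that $m$ is determined by $n$ modulo $D$), then shows that the first and third congruences imply the other two, and finally produces, for each of the $D$ residues $n$, one $m$ solving the first and third congruences. Your reformulation in $\Z[i]/D\Z[i]$ is cleaner and more conceptual: the solution set is the annihilator of the ideal $\mathfrak a=(a+bi,\,c+di)$, and once one knows $\mathfrak a\bar{\mathfrak a}=(D)$, the count $\#\bigl(\bar{\mathfrak a}/D\Z[i]\bigr)=D^2/N(\bar{\mathfrak a})=D$ is immediate. (The parenthetical appeal to coprimality of $\mathfrak a$ and $\bar{\mathfrak a}$ is not needed: in a Dedekind domain one has $\bigl((D):\mathfrak a\bigr)=(D)\mathfrak a^{-1}=\bar{\mathfrak a}$ unconditionally.) This buys a structural description of the solution set, which the paper's proof does not give; the price is that it imports the arithmetic of $\Z[i]$, whereas the paper's argument is self-contained, and the same elementary template transfers to the Eisenstein case needed for Lemma~\ref{reduction lemma 2}.

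There is one genuine gap: the identification $\mathfrak a\bar{\mathfrak a}=(D)$. You pass from the generators $a^2+b^2$, $c^2+d^2$, $\alpha\bar\beta=(ac+bd)+(bc-ad)i$ and $\bar\alpha\beta$ to the claim that the ideal is generated by the four \emph{rational} integers $a^2+b^2$, $c^2+d^2$, $ac+bd$, $ad-bc$. That step is not valid in general: the real and imaginary parts of an element need not lie in the ideal it generates (e.g.\ $(1+i,1-i)=(1+i)\neq(1)$), so from $\alpha\bar\beta$ and $\bar\alpha\beta$ one only gets $2(ac+bd)$ and $2(ad-bc)$ directly, and the issue is precisely at the ramified prime $2$. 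The conclusion $\mathfrak a\bar{\mathfrak a}=(D)$ is nevertheless true, but it needs an argument: since $\mathfrak a\bar{\mathfrak a}=(N)$ with $N=N(\mathfrak a)$ a rational integer, $N$ divides the real and imaginary parts of every generator, hence $N\mid D$; for the converse, write $\alpha=\gamma\alpha'$, $\beta=\gamma\beta'$ with $\gamma=\gcd_{\Z[i]}(\alpha,\beta)$, note that all four rational quantities acquire a factor $N(\gamma)$, and check prime by prime (split, inert, ramified) that the corresponding gcd for the coprime pair $\alpha',\beta'$ equals $1$, so $D=N(\gamma)=N$. This is exactly the ``local bookkeeping'' you defer at the end of your write-up; it is the one piece of the argument that must actually be carried out before the proof is complete.
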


\begin{proof}
   (i) We first show that the system admits at most $D$
   solutions. As $a,b,c,d$ are coprime, we can write
   $\ell_1a+\ell_2b+\ell_3c+\ell_4d=1$
   with suitable integers $\ell_i$. The assertion follows then from
   the fact that
   for any solution $(m,n)$ we have
   $$
      m=(\ell_1a+\ell_2b+\ell_3c+\ell_4d)m
      \equiv(\ell_1b-\ell_2a-\ell_3d+\ell_4c)n \ .
   $$

   (ii) We claim next that a pair $(m,n)$ satisfying
   \eqnref{equiv1} and \eqnref{equiv3},
   automatically satisfies the
   remaining two congruences. In fact, we have
   \be
       && a(an+bm)=a^2n+abm\equiv (a^2+b^2)n\equiv 0 \\
       && b(an+bm)=abn+b^2n\equiv (a^2+b^2)m\equiv 0 \\
       && c(an+bm)=acn+bcm\equiv (ac+bd)n\equiv 0 \\
       && d(an+bm)=adn+bdm\equiv (ac+bd)m\equiv 0
   \ee
   and, as $a,b,c,d$ are coprime, it follows that $an+bm\equiv
   0$. The equivalence $cn+dm\equiv0$ follows in the analogous
   manner.

   (iii) We assert that $\gcd(a,D)=\gcd(b,D)$ and
   $\gcd(c,D)=\gcd(d,D)$.
   In fact,
   the number $A=\gcd(a,D)$ divides all of the numbers
   $a,a^2+b^2,ac+bd,ad-bc$, hence it also divides the numbers
   $ab,bb,cb,db$. The coprimality of $a,b,c,d$ then implies that
   $A$ divides $b$, and hence $\gcd(a,D)=\gcd(a,b,D)$.
   The analogous statements hold for
   $\gcd(b,D),\gcd(c,D),\gcd(d,D)$, and this implies the
   assertion.

   (iv) Finally, we show that for every integer $n$ there is an
   integer $m$ such that $(m,n)$ is a solution of \eqnref{equiv1}
   and \eqnref{equiv3}.
   Using (iii), we see that we have
   $\gensubgroup a=\gensubgroup{\gcd(a,D)}
   =\gensubgroup{\gcd(b,D)}=\gensubgroup b$ for
   the generated subgroups of $\Z/D\Z$. So, given $n$, the congruence
   \eqnref{equiv1} has
   exactly $A=\gcd(a,D)$ solutions, and they are of the form
   \begin{equation}\label{sols}
      \hat m,\ \hat m+\frac DA,\ \dots,\ \hat m+(A-1)\frac DA \ .
   \end{equation}
   Suppose that for two indices $\ell,k\in\{0,\dots,A-1\}$ there is
   an equivalence
   $c(\hat m+\ell\frac DA)-dn\equiv c(\hat m+k\frac DA)-dn$
   modulo $D$.
   Then $c(k-l)\frac DA\equiv 0$ modulo $D$, and hence
   \begin{equation}\label{equiv5}
      (k-l)\frac DA\equiv 0 \mod \frac D{\gcd(c,D)} \ .
   \end{equation}
   Let now
   $C=\gcd(c,D)$ and $B=\frac D{AC}$. As $A$ and $C$ are coprime,
   it follows that $B$ is an integer. Therefore \eqnref{equiv5}
   says that $(k-l)BC$ is a multiple of $AB$. But then $A$
   divides $k-\ell$, which implies $k=\ell$.
   So we have shown that
   when $m$ runs through the $A$ solutions \eqnref{sols}, the
   numbers $cm-dn$ are distinct modulo $D$.

   On the other hand, as $D$ divides $ad-bc$, we have
   $a(cm-dn)\equiv c(am-bn)$ mod $D$,
   hence $cm-dn\equiv 0$ mod $\frac DA$.
   This leaves only $A$
   possible values modulo $D$ for the expression $cm-dn$,
   namely the multiples of $\frac DA$.
   We infer that each of these $A$ values must appear, among them
   the value $0$. The corresponding pair $(m,n)$ is then a
   solution as required.
\end{proof}

\begin{lemma}\label{reduction lemma}
   Let $a,b,c,d$ be coprime integers, and let
   $$
      D = \gcd(a^2+b^2,c^2+d^2,ac+bd,ad-bc) \ .
   $$
   Then there are coprime integers $\bar a,\bar b,\bar c,\bar d$
   such that
   $$
      \gcd(\bar a^2+\bar b^2,\bar c^2+\bar d^2,\bar a\bar c+\bar b\bar d,\bar a\bar d-\bar b\bar c)=1
   $$
   and
   \be
      \bar a^2+\bar b^2=\frac 1D(a^2+b^2),&& \bar c^2+\bar d^2=\frac 1D(c^2+d^2), \\
      \bar a\bar c+\bar b\bar d=\frac 1D(ac+bd),&& \bar a\bar d-\bar b\bar c=\frac 1D(ad-bc) \ .
   \ee
\end{lemma}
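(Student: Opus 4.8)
The natural framework is the ring $\Z[i]$ of Gaussian integers, in which $a^2+b^2=(a+bi)\overline{(a+bi)}$. I would set $z=a+bi$ and $w=c+di$ and begin by recording the elementary identity
$$
   \bar z\,w=(ac+bd)+(ad-bc)\,i \ ,
$$
together with $z\bar z=a^2+b^2$ and $w\bar w=c^2+d^2$. It follows that $D$ is precisely the largest positive integer dividing each of the integers $z\bar z$, $w\bar w$ and both coordinates of $\bar z w$, hence -- conjugating -- dividing each of the four Gaussian integers $z\bar z$, $z\bar w$, $w\bar z$, $w\bar w$.

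Since $\Z[i]$ is a Euclidean domain and $(z,w)\ne(0,0)$ (the $a,b,c,d$ being coprime), I would choose a greatest common divisor $\delta$ of $z$ and $w$ in $\Z[i]$ and write $z=\delta z'$, $w=\delta w'$ with $z',w'$ coprime in $\Z[i]$. The heart of the matter is the claim $D=\delta\bar\delta$. The inclusion $\delta\bar\delta\mid D$ is immediate: from $z=\delta z'$ and $w=\delta w'$ one gets $z\bar z=(\delta\bar\delta)(z'\bar{z'})$, $w\bar w=(\delta\bar\delta)(w'\bar{w'})$ and $\bar z w=(\delta\bar\delta)(\bar{z'}w')$, so $\delta\bar\delta$ divides each of the four integers whose greatest common divisor is $D$. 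For the reverse inclusion I would pick a B\'ezout relation $\alpha z'+\beta w'=1$ in $\Z[i]$, conjugate it, multiply the two relations, and multiply through by $\delta\bar\delta$; regrouping gives
$$
   \delta\bar\delta=\alpha\bar\alpha\,(z\bar z)+\alpha\bar\beta\,(z\bar w)+\beta\bar\alpha\,(w\bar z)+\beta\bar\beta\,(w\bar w) \ ,
$$
a $\Z[i]$-linear combination of the four Gaussian integers that $D$ was just seen to divide; as $D$ and $\delta\bar\delta$ both lie in $\Z$, this yields $D\mid\delta\bar\delta$, and therefore $D=\delta\bar\delta$.

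To conclude, set $\bar a+\bar b\,i\eqdef z'$ and $\bar c+\bar d\,i\eqdef w'$. The four claimed identities fall out of $z=\delta z'$, $w=\delta w'$ and $\bar z w=(\delta\bar\delta)(\bar{z'}w')$ upon dividing by $D=\delta\bar\delta$ and comparing real and imaginary parts. Applying the equality $D=\delta\bar\delta$ once more, now to the coprime pair $(z',w')$ in place of $(z,w)$, gives $\gcd(\bar a^2+\bar b^2,\bar c^2+\bar d^2,\bar a\bar c+\bar b\bar d,\bar a\bar d-\bar b\bar c)=1$; and then $\bar a,\bar b,\bar c,\bar d$ are automatically coprime, since a common prime divisor $p$ would force $p^2\mid\bar a^2+\bar b^2$. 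I expect the one step genuinely requiring an idea, and hence the main obstacle, to be the inclusion $D\mid\delta\bar\delta$: one must notice that $D$ divides not merely $z\bar z$ and $w\bar w$ but also the mixed products $z\bar w$ and $w\bar z$ (equivalently, that $D$ divides the ideal $(z,w)\overline{(z,w)}=(\delta\bar\delta)$), after which a B\'ezout relation finishes the argument with no case distinction on how primes split in $\Z[i]$.
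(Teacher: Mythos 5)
Your argument is correct, and it takes a genuinely different route from the paper. You identify $D$ as the norm $\delta\bar\delta$ of a Gaussian gcd $\delta=\gcd(a+bi,\,c+di)$ and obtain the reduced quadruple in a single step by splitting off $\delta$; both inclusions ($\delta\bar\delta\mid D$ via the factorizations $z\bar z=(\delta\bar\delta)z'\bar{z'}$ etc., and $D\mid\delta\bar\delta$ via a conjugated B\'ezout relation) check out, as does the final passage from $\gcd(\bar a^2+\bar b^2,\dots)=1$ to the coprimality of $\bar a,\bar b,\bar c,\bar d$. The paper instead stays entirely inside $\Z$: it builds a matrix $\bigl({\alpha\atop-\beta}{\beta\atop\alpha}\bigr)$ with $\alpha=x/D$, $\beta=y/D$, where $(x,y)=(1,y)$ is produced by the congruence analysis of Lemma A.1, shows the four invariants get multiplied by $(x^2+y^2)/D^2$, and then argues that the new gcd $\bar D=(x^2+y^2)/D\le(1+(D-1)^2)/D$ is strictly smaller than $D$ when $D>1$, so the construction must be iterated until $\bar D=1$ (the factor $1/D$ in the statement then emerges from telescoping). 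Your matrix-free version is conceptually the same rotation --- multiplication by $(x-yi)/D$ is exactly the paper's $M$ --- but choosing $\delta$ to be an actual Gaussian gcd rather than an arbitrary congruence solution removes the need for iteration and for the auxiliary counting Lemma A.1, at the cost of invoking unique factorization in $\Z[i]$; the paper's elementary congruence argument has the advantage of adapting with only cosmetic changes to the Eisenstein case (Lemma A.3), which your method would also handle since $\Z[e^{\pi i/3}]$ is likewise Euclidean.
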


\begin{proof}
   The idea is to consider matrices
   $M=\bigl({\alpha\atop -\beta}{\beta\atop \alpha}\bigr)$ over
   $\R$ such that the two image vectors
   \begin{equation}\label{images integral}\arraycolsep=0.25\arraycolsep
      \matr{\bar a\\ \bar b}\eqdef M\cdot\matr{a\\ b}=\matr{\alpha a+\beta b\\ -\beta a+\alpha b}
      \midtext{and}
      \matr{\bar c\\ \bar d}\eqdef M\cdot\matr{c\\ d}=\matr{\alpha c+\beta d\\ -\beta c+\alpha d}
   \end{equation}
   are integral. If $M$ is such a matrix, then
   $(a^2+b^2)\alpha=a(\alpha a+\beta b)+b(-\beta a+\alpha
   b)\in\Z$
   and similarly $(c^2+d^2)\alpha\in\Z$,
   $(ac+bd)\alpha\in\Z$, and
   $(ad-bc)\alpha\in\Z$. Consequently $\alpha$, and for the same
   reason $\beta$, are necessarily of the form
   $\alpha=\frac xD$, $\beta=\frac yD$ for some integers $x,y$.
   The conditions \eqnref{images integral} are then equivalent to
   \be
       ax+by\equiv 0 \mod D\ , &&
       bx-ay\equiv 0 \mod D\ , \\
       cx+dy\equiv 0 \mod D\ , &&
       dx-cy\equiv 0 \mod D\ .
   \ee
   Now, the proof of Lemma \ref{solutions of congruences} shows
   that this system is solvable even with a prescribed value for
   $x$. Let then $y$ be the solution associated with $x=1$.
   We have modulo $D$ the equivalence
   $0\equiv x(ax+by)-y(bx-ay)=a(x^2+y^2)$,
   and similarly
   $0\equiv b(x^2+y^2)$, as well as
   $0\equiv c(x^2+y^2)$ and
   $0\equiv d(x^2+y^2)$.
   As $a,b,c,d$ are coprime, this implies $x^2+y^2\equiv 0$,
   i.e., $\frac{x^2+y^2}D$ is an integer.
   We find
   \be
      \bar a^2+\bar b^2=\frac{x^2+y^2}{D^2}(a^2+b^2), &&
      \bar c^2+\bar d^2=\frac{x^2+y^2}{D^2}(c^2+d^2), \\
      \bar a\bar c+\bar b\bar d=\frac{x^2+y^2}{D^2}(ac+bd), &&
      \bar a\bar d-\bar b\bar c=\frac{x^2+y^2}{D^2}(ad-bc). \\
   \ee
   The number
   $\bar D\eqdef
   \gcd(\bar a^2+\bar b^2,\bar c^2+\bar d^2,\bar a\bar c+\bar b\bar d,\bar a\bar d-\bar b\bar c)$
   satisfies
   $$
      \bar D=\frac{x^2+y^2}{D^2}D\le\frac{1+(D-1)^2}{D^2}D \ ,
   $$
   where the right hand side is smaller than $D$ if $D>1$.
   We can now repeat the argument until eventually $\bar D=1$.
\end{proof}

   The following lemma can be proven
   using similar arguments.
   We leave the details to the reader.

\begin{lemma}\label{reduction lemma 2}
   Let $a,b,c,d$ be coprime integers, and let
   $$
      D=\gcd(a^2+ab+b^2,c^2+cd+d^2, ac+bc+bd,ad-bc) \ .
   $$
   Then there are coprime integers $\bar a,\bar b,\bar c,\bar d$
   such that
   $$
      \gcd(\bar a^2+\bar a\bar b+\bar b^2,\bar c^2+\bar c\bar d+\bar d^2, \bar a\bar c+\bar b\bar c+\bar b\bar d,\bar a\bar d-\bar b\bar c)=1 \ .
   $$
   and
   \be
      \bar a^2+\bar a\bar b+\bar b^2=\frac 1D(a^2+ab+b^2),
      &&
      \bar c^2+\bar c\bar d+\bar d^2=\frac 1D(c^2+cd+d^2),
      \\
      \bar a\bar c+\bar b\bar c+\bar b\,\bar d=\frac 1D(ac+bc+bd),
      &&
      \bar a\bar d-\bar b\bar c=\frac 1D(ad-bc) \ .
   \ee
\end{lemma}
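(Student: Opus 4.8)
The plan is to reprise, in the Eisenstein setting, the argument that proved Lemma~\ref{reduction lemma}. Put $\zeta\eqdef e^{\pi i/3}$, so that $\zeta^2=\zeta-1$, the automorphism $\sigma$ of $E_2$ is multiplication by $\zeta$, and $a^2+ab+b^2=|a+b\zeta|^2$. In the $\Z$-basis $\{1,\zeta\}$ of $\C$, multiplication by $\alpha+\beta\zeta$ has matrix
$$
   M_{\alpha,\beta}\eqdef\matr{\alpha & -\beta \\ \beta & \alpha+\beta},\qquad \det M_{\alpha,\beta}=\alpha^2+\alpha\beta+\beta^2 .
$$
First I would look for such a matrix over $\R$ whose two image vectors $\vect{\bar a\\ \bar b}\eqdef M_{\alpha,\beta}\vect{a\\ b}$ and $\vect{\bar c\\ \bar d}\eqdef M_{\alpha,\beta}\vect{c\\ d}$ are integral. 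Inverting $\matr{a & -b\\ b & a+b}$ gives $(a^2+ab+b^2)\alpha=(a+b)\bar a+b\bar b\in\Z$ and $(a^2+ab+b^2)\beta=-b\bar a+a\bar b\in\Z$; suitable integer combinations (e.g.\ $(ad-bc)\alpha=d\bar a-b\bar c$, $(ac+bc+bd)\alpha=c\bar a+b\bar c+b\bar d$, together with the $(c,d)$-version of the inversion, and the same four with $\beta$) show likewise that $(c^2+cd+d^2)\alpha$, $(ac+bc+bd)\alpha$, $(ad-bc)\alpha$ and their $\beta$-analogues are integers. Since $D$ is the g.c.d.\ of these four numbers, $D\alpha,D\beta\in\Z$, so $\alpha=x/D$, $\beta=y/D$ for integers $x,y$, and integrality of the image vectors becomes the congruence system
$$
   xa-yb\equiv 0,\quad ya+(x+y)b\equiv 0,\quad xc-yd\equiv 0,\quad yc+(x+y)d\equiv 0 \mod D .
$$

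The core of the argument is then the Eisenstein analogue of Lemma~\ref{solutions of congruences}: \emph{this system has exactly $D$ solutions $(x,y)$ modulo $D$, and is solvable for any prescribed value of $x$.} I would establish it by transporting the four steps of the proof of Lemma~\ref{solutions of congruences} — at most $D$ solutions because $a,b,c,d$ are coprime; an appropriate two of the four congruences force the other two; $\gcd(a,D)=\gcd(b,D)$ and $\gcd(c,D)=\gcd(d,D)$; and solvability with a prescribed coordinate by the same counting — the identities now being built from $a^2+ab+b^2$, $ac+bc+bd$, $ad-bc$ and the relation $\zeta^2=\zeta-1$ rather than from $a^2+b^2$, $ac+bd$, $ad-bc$ and $i^2=-1$. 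Conceptually the cleanest route is to note that $R\eqdef\Z+\zeta\Z$ is the ring of Eisenstein integers, hence a principal ideal domain: with $v\eqdef a+b\zeta$ and $w\eqdef c+d\zeta$ the system reads $(x+y\zeta)v\equiv (x+y\zeta)w\equiv 0$ in $R/DR$, and writing $\mu\eqdef\gcd_R(v,w)$ — which is primitive because $\gcd(a,b,c,d)=1$ — one checks that $D=N(\mu)$ and that the system is equivalent to $\bar\mu\mid x+y\zeta$ in $R$, which has exactly $N(\mu)=D$ solutions modulo $D$ and whose solution set realizes every first coordinate, $\mu$ being primitive. I expect this step — verifying that the divisibility bookkeeping of the four-step argument really goes through, in particular pinning down which two congruences force the remaining two — to be the main obstacle, since $\zeta^2=\zeta-1$ yields messier identities than the Gaussian case; once it is settled (or circumvented by the principal-ideal-domain formulation), the rest is mechanical.

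Granting this, the descent is formal and mirrors Lemma~\ref{reduction lemma}. Take the solution with $x=1$ and $0\le y<D$. Multiplying $(x+y\zeta)v\equiv 0$ in $R/DR$ by its conjugate gives $(x^2+xy+y^2)v\equiv 0$, i.e.\ $D\mid(x^2+xy+y^2)a$ and $D\mid(x^2+xy+y^2)b$; likewise with $c,d$; hence by coprimality $x^2+xy+y^2\equiv 0 \mod D$, say $x^2+xy+y^2=D\bar D$ with $\bar D$ a positive integer. Since $x^2+xy+y^2=1+y+y^2\le D^2-D+1<D^2$ for $D\ge 2$, we get $\bar D<D$. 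Setting $\bar a\eqdef(xa-yb)/D$, $\bar b\eqdef(ya+(x+y)b)/D$ and $\bar c,\bar d$ analogously, the transformation $M_{x/D,y/D}$ multiplies $N(v)$, $N(w)$ and the two coordinates of the pairing $\overline{v}w$ — that is, all four invariants — by $\det M_{x/D,y/D}=\bar D/D$, so that
$$
   \bar a^2+\bar a\bar b+\bar b^2=\frac{\bar D}{D}\,(a^2+ab+b^2)
$$
and the three companion identities hold, with associated g.c.d.\ equal to $\bar D$. Iterating until the g.c.d.\ reaches $1$ — at which point $\bar a,\bar b,\bar c,\bar d$ are automatically coprime, since a common prime would divide the g.c.d.\ of the four (now coprime) invariants — and composing the successive matrices, the scalings telescope to the factor $1/D$ in the statement. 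The analogous lemma needed for Theorem~\ref{SC theorem cx mult 2} is obtained in exactly the same manner.
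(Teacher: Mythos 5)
Your proposal is correct, and it follows exactly the route the paper intends: the paper gives no proof of Lemma~\ref{reduction lemma 2} at all, stating only that it \engqq{can be proven using similar arguments} to Lemma~\ref{reduction lemma} and leaving the details to the reader, so what you have done is to actually supply those details. Your transport of the argument is accurate in all the computational particulars I checked: the matrix of multiplication by $\alpha+\beta\zeta$ in the basis $\{1,\zeta\}$, the identities such as $(a^2+ab+b^2)\alpha=(a+b)\bar a+b\bar b$ and $(ad-bc)\alpha=d\bar a-b\bar c$ forcing $\alpha=x/D$, $\beta=y/D$, the resulting congruence system, the divisibility $D\mid x^2+xy+y^2$, the bound $1+y+y^2\le D^2-D+1<D^2$, and the telescoping of the scalings to $1/D$. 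The one step that does not transport mechanically is the analogue of Lemma~\ref{solutions of congruences}, and you correctly isolate it; your reformulation in the Eisenstein integers $R=\Z+\zeta\Z$ is in fact the cleaner way to settle it. Writing $v=a+b\zeta$, $w=c+d\zeta$, $\mu=\gcd_R(v,w)$, one checks that the $R$-ideal generated by $N(v),N(w),\bar vw,\bar wv$ is $N(\mu)R$, whence $D=N(\mu)$, the system is equivalent to $\bar\mu\mid x+y\zeta$, and the solution set has exactly $D$ elements; the only point needing a word of care is your claim that every first coordinate $x$ is realized, which requires that the smallest positive rational integer divisible by $\bar\mu$ is $N(\mu)$ -- this holds for primitive $\mu$ because the ramified prime $2\zeta-1$ squares to a unit times $3$, so it can divide a primitive $\mu$ at most once. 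This is exactly the kind of detail the paper's own elementary four-step argument handles implicitly in the Gaussian case, and your version buys a conceptual explanation of why the congruence count works.
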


\frenchspacing

\bigskip
\footnotesize
   Tho\-mas Bau\-er, Christoph Schulz,
   Fach\-be\-reich Ma\-the\-ma\-tik und In\-for\-ma\-tik,
   Philipps-Uni\-ver\-si\-t\"at Mar\-burg,
   Hans-Meer\-wein-Stra{\ss}e,
   D-35032~Mar\-burg, Germany.

\nopagebreak
   E-mail: \texttt{tbauer@mathematik.uni-marburg.de},
   \texttt{schulz@mathematik.uni-marburg.de}


\begin{thebibliography}{99}\footnotesize\itemsep=0cm\parskip=0cm

\bibitem{Bau95}
   Bauer, Th.:
   Quartic surfaces with 16 skew conics.
   J. reine angew. Math. 464, 207-217 (1995)

\bibitem{Bau98a}
   Bauer, Th.:
   Seshadri constants and periods of polarized abelian varieties.
   Math. Ann. 312, 607-623 (1998)

\bibitem{BauSze98}
   Bauer, Th., Szemberg, T.:
   Seshadri constants on abelian surfaces.
   Appendix to: Bauer, Th.:
   Seshadri constants and periods of polarized abelian varieties.
   Math. Ann. 312, 607-623 (1998)

\bibitem{Bau99}
   Bauer, Th.:
   Seshadri constants on algebraic surfaces.
   Math. Ann. 313, 547-583 (1999)

\bibitem{BLComplexTori}
   Birkenhake, Ch., Lange, H.:
   Complex Tori.
   Birkh\"auser, 1999.

\bibitem{Bro06}
   Broustet, A.:
   Constantes de Seshadri du diviseur anticanonique des surfaces de del Pezzo.
   Enseign. Math. (2) 52, No. 3-4, 231-238 (2006).

\bibitem{Cas97}
   Cassels, J.W.S.:
   An introduction to the geometry of numbers.
   Berlin, 1997.

\bibitem{Deb04}
   Debarre, O.:
   Seshadri Constants of Abelian Varieties.
   The Fano Conference,
   Proceedings, Torino 2002, 379-394, A. Collino, A. Conte, and M. Marchiso editors, Torino, 2004.

\bibitem{Dem92}
   Demailly, J.-P.:
   Singular Hermitian metrics on positive line bundles.
   Complex algebraic varieties (Bayreuth, 1990), Lect. Notes Math. 1507,
   Springer-Verlag, 1992, pp. 87-104

\bibitem{Fue06}
   Fuentes García, L.:
   Seshadri constants on ruled surfaces: the rational and the elliptic cases.
   Manuscr. Math. 119, No. 4, 483-505 (2006).

\bibitem{GL}
   Gruber, P., Lekkerkerker, C:
   Geometry of numbers.
   Amsterdam, 1987.

\bibitem{HN65}
   Hayashida, T., Nishi, M.:
   Existence of curves of genus two on a product of two elliptic
   curves.
   J. Math. Soc. Japan 17, No. 1, 1-16 (1965)

\bibitem{Kon03}
   Kong, J.:
   Seshadri constants on Jacobian of curves.
   Trans. Am. Math. Soc. 355, No.8, 3175-3180 (2003)

\bibitem{Knu07}
   Knutsen, A.L.:
   A note on Seshadri constants on general $K3$ surfaces.
   arXiv:0704.2377v1 [math.AG]

\bibitem{LB}
   Lange, H., Birkenhake, Ch.:
   Complex abelian varieties.
   Grundl. Math. Wiss. 302, Springer-Verlag, 1992

\bibitem{Laz97}
   Lazarsfeld, R.:
   Lengths of periods and Seshadri constants of abelian varieties.
   Math. Res. Letters 3, 439-447 (1997)

\bibitem{PAG}
   Lazarsfeld, R.:
   Positivity in Algebraic Geometry~I.
   Springer-Verlag, 2004.

\bibitem{Nak96}
   Nakamaye, M.:
   Seshadri constants on abelian varieties.
   Amer. J. Math. 118, 621--635 (1996)

\bibitem{Ros07}
   Ross, J.:
   Seshadri constants on symmetric products of curves.
   Math. Res. Lett. 14, No. 1, 63-75 (2007)

\bibitem{SzHabil}
   Szemberg, T.:
   Global and local positivity of line bundles.
   Habilitationsschrift, Essen, 2001.

\bibitem{Tut03}
   Tutaj-Gasinska, H.:
   A bound for Seshadri constants on $\P^2$.
   Math. Nachr. 257, 108-116 (2003).

\bibitem{Tut04}
   Tutaj-Gasinska, H.:
   Seshadri constants in half-periods of an abelian surface.
   J. Pure Appl. Algebra 194, No. 1-2, 183-191 (2004)

\end{thebibliography}
\end{document}